\theoremstyle{definition}
\newtheorem{ntn}{Notation}[section]
\theoremstyle{plain}
\newtheorem{lem}[ntn]{Lemma}
\newtheorem{prp}[ntn]{Proposition}
\newtheorem{thm}[ntn]{Theorem}
\newtheorem{conj}[ntn]{Conjecture}
\theoremstyle{definition}
\newtheorem{rem}[ntn]{Remark}
\newtheorem{exa}[ntn]{Example}
\numberwithin{equation}{section}
\newcommand{\N}{\mathbb{N}}
\newcommand{\z}{\mathbb{Z}}
\newcommand{\q}{\mathbb{Q}}
\newcommand{\F}{\mathbb{F}}
\newcommand{\GG}{\mathcal{G}}
\newcommand{\EE}{\mathcal{E}}
\newcommand{\KK}{\mathcal{K}}
\newcommand{\LL}{\mathcal{L}}
\newcommand{\ppp}{\mathfrak{p}}
\newcommand{\mmm}{\mathfrak{m}}
\newcommand{\lan}{\langle}
\newcommand{\ran}{\rangle}
\newcommand{\se}{\subseteq}
\newcommand{\arr}{\rightarrow}
\newcommand{\larr}{\longrightarrow}
\newcommand{\harr}{\hookrightarrow}
\newcommand{\two}{\twoheadrightarrow}
\newcommand{\B}{{\rm B}}
\newcommand{\Ee}{{\rm E}}
\newcommand{\GL}{{\rm GL}}
\newcommand{\SL}{{\rm SL}}
\newcommand{\GE}{{\rm GE}}
\newcommand{\im}{{\rm im}}
\newcommand{\ab}{{\rm ab}}
\newcommand{\PSL}{{\rm PSL}}
\newcommand{\rank}{{\rm rank}}
\newcommand{\spcd}{{\rm scpd}}
\newcommand {\mtxx}[4]
{\left(\!
\begin{array}{cc}
\!\!#1 & \!\!#2 \\
\!\!#3 & \!\!#4
\end{array}\!\!
\right)}
\newtheoremstyle{athm}
{}
{}
{\itshape}
{}
{\scshape}
{}
{.5em}
{\thmnote{#3}}
\theoremstyle{athm}
\newtheorem*{athm}{}
\begin{document}

\title{The second integral homology of ${\rm SL}_2(\mathbb{Z}[1/n])$}
\author{Behrooz Mirzaii, Bruno R. Ramos, Thiago Verissimo}
\address{\sf Instituto de Ci\^encias Matem\'aticas e de Computa\c{c}\~ao (ICMC), 
Universidade de S\~ao Paulo, S\~ao Carlos, São Paulo, Brasil}
\email{bmirzaii@icmc.usp.br}
\email{brramos@usp.br}
\email{thiagovlg@usp.br}

\begin{abstract}
In this article, we explore the second integral homology, or Schur multiplier, of the special linear group 
${\rm SL}_2(\mathbb{Z}[1/n])$ for a positive integer $n$. We definitively calculate the group structure of 
$H_2({\rm SL}_2(\mathbb{Z}[1/n]),\mathbb{Z})$ 
when $n$ is divisible by one of the primes $2$, $3$, $5$, $7$ or $13$. For a general $n > 1$, 
we offer a partial description by placing the homology group within an exact sequence, and we investigate 
its rank. Finally, we propose a conjectural structure for $H_2({\rm SL}_2(\mathbb{Z}[1/n]),\mathbb{Z})$ when $n$ is not divisible 
by any of those specific primes.\\

\noindent MSC(2020): 20J06, 19C09.\\

\noindent Key words: Group extensions, Special linear group, $K$-theory, Schur multiplier.

\end{abstract}

\maketitle
\section*{Introduction}

The (co)homology groups of $\SL_2(\z[1/n])$ are of considerable interest and importance, 
finding applications in diverse fields of mathematics such as number theory, algebraic 
$K$-theory, hyperbolic geometry and the theory of modular and automorphic forms. These 
groups offer valuable insights into the arithmetic properties of the ring $\z[1/n]$ and 
related rings.

Since $\SL_2(\z[1/n])$ is an arithmetic group, its (co)homology groups are known to be finitely 
generated. However, the exact determination of their group structure remains a challenging and 
important problem, which has been the subject of many research articles (see, for example, 
\cite{moss1980}, \cite{an1998}, \cite{ww1998}, \cite{ae2014}, \cite{h2016},  \cite{B-E2024} 
and \cite{carl2024}).

The study of (co)homology groups of $\SL_2(\z[1/n])$ has seen considerable progress. Adem and Naffah 
\cite{an1998} fully computed these groups for $\SL_2(\z[1/p])$, where $p$ is a prime number. Bui and 
Ellis \cite{ae2014} employed computational methods to calculate homology groups for $n \leq 50$ (with 
few exceptions). Hutchinson \cite{h2016} later determined the second homology when $6$ divides $n$. 
More recently, \cite{B-E2024} and \cite{carl2024}, independently and with different methods, have 
achieved complete calculations of the first homology groups for arbitrary $n$ (see Theorem \ref{H1} 
in Section \ref{sec2}).

The primary focus of this article is the investigation of the second integral homology 
of the group $\SL_2(\z[1/n])$. We may always assume that $n$ is square-free. Building on Hutchinson's 
insights, incorporating new ideas and drawing upon the results outlined in the preceding paragraph, 
we present a structural theorem for $H_2(\SL_2(\z[1/n]),\z)$ provided $n$ is 
divided by one of the primes $2$, $3$, $5$, $7$ or 
$13$. We state this as the next theorem (see Theorem \ref{rp=1}). 


\begin{athm}[{\bf Theorem A.}]
Let $n$ be a square-free positive integer and let
$\spcd(n,2730)$ be the smallest common prime divisor of $n$ and $2730=2\cdot 3 \cdot 5 \cdot 7 \cdot 13$.
\par {\rm (i)} If $\spcd(n, 2730)=2$, i.e. $n$ is even, then 
\[
H_2(\SL_2(\z[1/n]),\z) \simeq \z\oplus  \bigoplus_{p\mid (n/2)} \z/(p-1).
\]
\par {\rm (ii)} If $\spcd(n, 2730)=3$, then 
\[
H_2(\SL_2(\z[1/n]),\z) \simeq \z\oplus  \bigoplus_{p\mid (n/3)} \z/(p-1).
\]
\par {\rm (iii)} If $\spcd(n, 2730)=5$, then 
\[
H_2(\SL_2(\z[1/n]),\z) \!\simeq\! \z \oplus
\begin{cases}
\z/2  \oplus \displaystyle\bigoplus_{p\mid (n/5)}\z/(p-1) & \text{if $p\equiv 1\!\!\!\! \pmod {4}$ for all $p\mid (n/5)$},\\
\z/4\oplus \z/((q-1)/2) \! \oplus \!\!\!\!\displaystyle\bigoplus_{p\mid (n/5q)}\!\!\! \z/(p-1) & \text{if $q\equiv 3 \!\!\!\! 
\pmod {4}$ for some $q\mid (n/5)$.}
\end{cases}
\]
\par {\rm (iv)} If $\spcd(n, 2730)=7$, then 
\[
H_2(\SL_2(\z[1/n]),\z) \simeq \z\oplus \z/3\oplus \bigoplus_{p\mid (n/7)} \z/(p-1).
\]
\par {\rm (v)} 
If $\spcd(n, 2730)=13$, then 
\[
H_2(\SL_2(\z[1/n]),\z) \!\simeq \! \z \oplus
\begin{cases}
\z/6  \oplus \displaystyle\bigoplus_{p\mid (n/13)}\z/(p-1) &\!\!\! \text{if $p\equiv 1\!\!\!\! \pmod {4}$ for all $p\mid (n/13)$},\\
\z/12 \oplus \z/((q-1)/2) \! \oplus \!\!\!\!\!\displaystyle\bigoplus_{p \mid (n/13q)}\!\!\!\!\! \z/(p-1) &\!\!\! \text{if $q\equiv 3 \!\!\!\! \pmod {4}$ for some $q\mid (n/13)$.}
\end{cases}
\]
\end{athm}
\bigskip
 
It is a known fact, proved by Adem and Naffah in \cite{an1998}, that the second homology group
$H_2(\SL_2(\z[1/p]),\z)$, $p$ a prime, is of rank one precisely when $p$ is one of the primes $2$, $3$, $5$, 
$7$ or $13$. More precisely,  if
\[
r_p:=\rank\ H_2(\SL_2(\z[1/p]),\z),
\]
then  $r_p=1$ if and only if $p=2$, $3$, $5$, $7$ or $13$ (see Theorem \ref{AN} below).

For any square-free integer $n$, we establish the following theorem (see Theorem \ref{exact2}) that provides 
insights into the structure of $H_2(\SL_2(\z[1/n]),\z)$. Notably, certain parts of {\sf Theorem A} 
are derived directly from this broader theorem.

\begin{athm}[{\bf Theorem B.}]
Let $n$ be a square-free positive integer with prime decomposition $n=p_1\cdots p_l$ and let 
$r_{p_1}\leq \dots \leq r_{p_l}$. If $r_{p_i}=r_{p_{i+1}}$, we assume that $p_i< p_{i+1}$. 
Then we have the exact sequence
\[
\begin{array}{c}
H_2(\SL_2(\z[1/p_1]),\z) \arr H_2(\SL_2(\z[1/n]),\z) \arr \bigoplus_{i=2}^l \z/(p_i-1) \arr 0.
\end{array}
\]
\end{athm}
\bigskip

{\sf Theorem B} follows from a careful analysis of the Mayer-Vietoris exact sequence applied to the 
amalgamated product
\[
\begin{array}{c}
\SL_2(\z[1/pn])\simeq \SL_2(\z[1/n]) \ast_{\Gamma_0(n,p)} \SL_2(\z[1/n]),
\end{array}
\]
where $p$ is a prime not dividing $n$ and $\Gamma_0(n,p)$ is the following subgroup of $\SL_2(\z[1/n])$:
\[
\begin{array}{c}
\Gamma_0(n, p):=\bigg\{{\mtxx a b c d}\in \SL_2(\z[1/n]) : p\mid c\bigg\}.
\end{array}
\]
For more on this isomorphism, we refer the reader to \cite[p. 80]{serre1980} (see also Theorem \ref{iso-amal}
in the next section).

Understanding the first and the second homology of $\Gamma_0(n, p)$ is essential for proving {\sf Theorem B}.
Specifically, the structure of the first homology, $H_1(\Gamma_0(n,p),\z)$, is determined and detailed 
as follows (see Theorem \ref{G0}).

\begin{athm}[{\bf Theorem C.}]
Let $n$ be a natural number greater than one, $p$ a prime not dividing $n$ and $d:= \gcd\{m^2-1: m\mid n\}$. 
\par {\rm (i)} If $p>3$ and $p\nmid d$, then
\[
\begin{array}{c}
H_1(\Gamma_0(n,p),\z)\simeq  H_1(\SL_2(\z[1/n]),\z) \oplus \z/(p-1).
\end{array}
\]
\par {\rm (ii)} If $p=3$ and $d=3t$, where $3\nmid t$ (e.g. when $2\mid n$ or $5\mid n$), then 
\[
\begin{array}{c}
H_1(\Gamma_0(n, 3),\z)\simeq  H_1(\SL_2(\z[1/n]),\z) \oplus \z/2 \oplus \z/3.
\end{array}
\]
\par {\rm (iii)} If $p=2$ and $d=8t$, where $2\nmid t$ (e.g. when $3\mid n$ or $5\mid n$), then 
\[
\begin{array}{c}
H_1(\Gamma_0(n,2),\z)\simeq  H_1(\SL_2(\z[1/n]),\z) \oplus \z/2 \oplus \z/4.
\end{array}
\]
\end{athm}
\bigskip

We conclude the introduction by outlining the structure of the present article. In Section~\ref{sec1}, 
we review established results concerning the group $\SL_2(\z[1/n])$, specifically its congruence 
subgroup property and its description as an amalgamated product. Section~\ref{sec2} reviews necessary 
results on the first and second homology of $\SL_2(\z[1/n])$. Section~\ref{sec3} provides a brief overview 
of the Mayer-Vietoris exact sequence associated with the amalgamated product decomposition of $\SL_2(\z[1/pn])$, 
where $p$ is a prime not dividing $n$. Section~\ref{sec4} examines the first homology of $\Gamma_0(n,p)$ and 
presents the proof of {\sf Theorem C}. In Section \ref{sec5}, we demonstrate that the inclusion 
$\Gamma_0(n,p) \harr \SL_2(\z[1/n])$ induces a surjective map on second homology. Section~\ref{sec6} 
then provides the proofs of {\sf Theorems B} and {\sf A}. Finally, Section~\ref{sec7} investigates 
the rank of the second homology of $\SL_2(\z[1/n])$ and concludes the article by proposing 
a conjectural structure for the second homology of $\SL_2(\z[1/n])$ when $n$ is not divisible 
by any of the primes $2$, $3$, $5$, $7$ or $13$.
\\
~\\
{\bf Acknowledgments.} We sincerely thank the anonymous referees for their careful reading of the 
manuscript and for providing numerous valuable comments. We are especially grateful for their 
identification of an error in the original statement of the main theorem. 
The second and third authors' contributions to this article were made possible 
by CAPES (Coordena\c{c}\~ao de Aperfeiçoamento de Pessoal de N\'ivel Superior) Ph.D. and M.Sc. fellowships 
(grant numbers 88887.983475/2024-00 and 88887.955905/2024-00).

\section{The special linear group of degree two over \texorpdfstring{$\z[1/n]$}{Lg}}\label{sec1}

For any nonzero integer $n$, $\z[1/n]$ is the following subring of $\q$:
\[
\begin{array}{c}
\z[1/n]:=\{a/n^r:a\in \z, r\in \z^{\geq 0} \}.
\end{array}
\]
This is a Euclidean domain. If $m\mid n$, then $\z[1/m]\se \z[1/n]$. 
Moreover, for any $k \neq 0$ and $r \geq 1$, $\z[\pm 1/{k^rm}]=\z[{1}/{km}]$.
Thus, if $n=\pm p_1^{m_1}\cdots p_s^{m_s}$ is the prime factorization of $n$, then 
\[
\begin{array}{c}
\z[1/n]=\z[{1}/{p_1\cdots p_s}].
\end{array}
\]
Therefore, to study $\z[1/n]$, we are allowed to always assume that $n$ is a square-free positive integer.

For a commutative ring $A$, let $\Ee_2(A)$ be the subgroup of $\GL_2(A)$ generated by the elementary matrices
\[
E_{12}(a):=\begin{pmatrix}
1 & a\\
0 & 1
\end{pmatrix}, \ \ \ \
E_{21}(a):=\begin{pmatrix}
1 & 0\\
a & 1
\end{pmatrix}, \ \ \ \ a\in A.
\]
Clearly, $\Ee_2(A)\se \SL_2(A)$. For examples of rings for which this is a proper subgroup, see 
\cite[Theorem 6.1]{cohn1966}.

We say that $A$ is a $\GE_2$-{\it ring} if $\Ee_2(A)=\SL_2(A)$.
It is known that semilocal rings and Euclidean domains are $\GE_2$-rings (see \cite[p.~245]{s1982} 
and \cite[\S2]{cohn1966}). Hence, $\z[1/n]$ is a $\GE_2$-ring. Indeed, it can be shown that for
any $n>1$, $\SL_2(\z[1/n])$ is generated by the matrices $E_{21}(1)$ and $E_{12}(-1/n)$
(see \cite[p.~204]{men1967} and \cite[Lemma~1.1]{carl2024}).

The next theorem is a special 
case of a general result due to Vaserstein and Liehl (see \cite{vas1972} and \cite{liehl1981}).

\begin{thm}[Vaserstein, Liehl]\label{VL}
Let $n>1$ be an integer and let $I_1$ and $I_2$ be nonzero ideals of $\z[1/n]$. Let
\[
\widetilde{\Gamma}(I_1, I_2) :=\bigg\{ {\mtxx a b c d} \in 
\begin{array}{c}
\SL_2(\z[1/n]) 
\end{array}
:  b \in I_1, c \in I_2, a-1, d-1 \in I_1I_2\bigg\}.
\]
Then $\widetilde{\Gamma}(I_1, I_2)$ is generated by  elementary matrices $E_{12}(x)$, $x \in I_1$, 
and $E_{21}(y)$, $y \in I_2$, and it is of finite index in $\SL_2(\z[1/n])$.
\end{thm}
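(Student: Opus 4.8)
The plan is to treat the two assertions separately. The finiteness of the index is elementary and I would prove it directly; the generation statement is the substantive content, and it is a special case of theorems of Vaserstein \cite{vas1972} and Liehl \cite{liehl1981}, so for that part I would verify that their hypotheses apply to $R:=\z[1/n]$ (with $n>1$) and quote them rather than reprove them.

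For the index: since $I_1I_2\se I_1$ and $I_1I_2\se I_2$, each of the conditions $b\in I_1$, $c\in I_2$, $a-1\in I_1I_2$, $d-1\in I_1I_2$ depends only on the class of the matrix modulo $I_1I_2$, so $\widetilde{\Gamma}(I_1,I_2)$ is the full preimage of some subset $\Sigma$ under the reduction homomorphism $\SL_2(R)\arr \SL_2(R/I_1I_2)$; a short check of the four conditions under matrix multiplication and inversion shows that $\Sigma$ is a subgroup, hence so is $\widetilde{\Gamma}(I_1,I_2)$. Writing $I_1I_2=(m)$ with $m$ a positive integer coprime to $n$, reduction induces a ring isomorphism $R/(m)\cong \z/m$ (because $n$ is a unit modulo $m$), so $\SL_2(R/I_1I_2)\cong \SL_2(\z/m)$ is finite and $\widetilde{\Gamma}(I_1,I_2)$ has finite index. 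Finally, each $E_{12}(x)$ with $x\in I_1$ and each $E_{21}(y)$ with $y\in I_2$ visibly satisfies the four conditions, so the subgroup $H$ generated by them satisfies $H\se \widetilde{\Gamma}(I_1,I_2)$; it remains to prove the reverse inclusion.

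The inclusion $\widetilde{\Gamma}(I_1,I_2)\se H$ is the heart of the matter and the step I expect to be the genuine obstacle. The idea is a relative Euclidean reduction: given $g=\mtxx a b c d\in \widetilde{\Gamma}(I_1,I_2)$, the relation $ad-bc=1$ makes the first column $(a,c)$ unimodular; left multiplication by $E_{21}(y)$, $y\in I_2$, replaces $c$ by $c+ya$, and left multiplication by $E_{12}(x)$, $x\in I_1$, replaces $a$ by $a+xc$, and both keep the matrix inside $\widetilde{\Gamma}(I_1,I_2)$. One would drive $(a,c)$ to the shape $(u,0)$ with $u$ a unit congruent to $1$ modulo $I_1I_2$, then clear the $(1,2)$ entry by right multiplication by a suitable $E_{12}(x)$, $x\in I_1$ (possible since $u$ is a unit, so $u^{-1}b\in I_1$), and finally dispose of the remaining diagonal matrix $\diag(u,u^{-1})$, which also lies in $\widetilde{\Gamma}(I_1,I_2)$. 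The obstacle, and the reason the hypothesis $n>1$ cannot be dropped, is that for $\SL_2$ this reduction does not terminate over an arbitrary ring: over $\z$ the analogous statement is false (the congruence subgroup problem has a negative answer for $\SL_2(\z)$), and termination requires $R$ to have infinitely many units. Controlling the reduction — and in particular showing that $\diag(u,u^{-1})\in H$ whenever $u\equiv 1 \pmod{I_1I_2}$, via the triviality of the relevant Mennicke-type symbols — is exactly the elementary-generation refinement of the congruence subgroup property for $\SL_2$ over rings of $S$-integers, due to Vaserstein and Liehl (building on the work of Bass, Milnor and Serre). I would therefore conclude by recording that $\z[1/n]$ with $n>1$ is a Dedekind ring of arithmetic type with an infinite unit group — it is a principal ideal domain with finite residue fields $R/(p)\cong\F_p$ for $p\nmid n$, the ring of $S$-integers in $\q$ for $S=\{\infty\}\cup\{p:p\mid n\}$ with $|S|\geq 2$, and its group of units $\{\pm p_1^{k_1}\cdots p_s^{k_s}\}$ is infinite — and then invoking \cite{vas1972} and \cite{liehl1981}. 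Filling in the omitted termination argument in full would be the only real work in a self-contained proof, and it is precisely the content of those references.
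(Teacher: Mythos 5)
Your proposal is correct and matches the paper, whose entire proof is the citation to Vaserstein \cite{vas1972} and Liehl \cite{liehl1981}; you likewise delegate the substantive elementary-generation statement to those references after verifying that $\z[1/n]$ ($n>1$) satisfies their hypotheses. The extra material you supply — the check that $\widetilde{\Gamma}(I_1,I_2)$ is the preimage of a subgroup of the finite group $\SL_2(\z[1/n]/I_1I_2)\cong\SL_2(\z/m)$, hence of finite index, and the easy inclusion $H\se\widetilde{\Gamma}(I_1,I_2)$ — is correct and only elaborates on what the paper leaves to the cited sources.
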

\begin{proof}
See \cite[Theorem, p. 321]{vas1972} and \cite[\S4]{liehl1981}. 
\end{proof}

\begin{lem}\label{surj-GE2}
Let $I$ be an ideal of a Euclidean domain $A$ and $\pi:A\arr A/I$ be the quotient map of rings.
Then the natural map $\pi_\ast:\SL_2(A) \arr \SL_2(A/I)$ is surjective.
\end{lem}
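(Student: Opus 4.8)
The plan is to reduce the statement to the fact that $\SL_2$ of a Euclidean domain is generated by elementary matrices, and then observe that elementary matrices lift along $\pi$. First I would recall that $A/I$ need not be Euclidean, so I cannot simply invoke $\Ee_2(A/I) = \SL_2(A/I)$ directly; instead the argument must go through the explicit structure of $\SL_2(A/I)$ as a quotient-compatible target. Concretely, given $\overline{g} = \mtxx{\overline a}{\overline b}{\overline c}{\overline d} \in \SL_2(A/I)$, the goal is to produce $g \in \SL_2(A)$ with $\pi_\ast(g) = \overline g$.

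The key steps, in order, are as follows. First, lift the first column: choose $a, c \in A$ with $\pi(a) = \overline a$ and $\pi(c) = \overline c$. Since $\overline{ad} - \overline{bc} = 1$ in $A/I$, the elements $\overline a$ and $\overline c$ generate the unit ideal of $A/I$, which means $aA + cA + I = A$. Second — and this is the crux — I would argue that one can modify $a$ and $c$ by elements of $I$ so that the lifted pair becomes unimodular in $A$ itself, i.e. $aA + cA = A$; equivalently, that the pair $(\overline a, \overline c)$ lifts to a unimodular row over $A$. This is where I would use that $A$ is a Euclidean domain: in a Euclidean domain (in fact any Bézout domain of suitable dimension), $\SL_1$-stable range conditions let one correct a "mod-$I$ unimodular" row to a genuinely unimodular one. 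Having $aA + cA = A$, pick $x, y \in A$ with $ax + cy = 1$; then $\mtxx{a}{-y}{c}{x} \in \SL_2(A)$ reduces mod $I$ to a matrix with the correct first column. Third, multiply on the right by a suitable elementary matrix $E_{12}(t)$ with $t \in A$ chosen so that the top-right (hence, by the determinant-one condition, also the bottom-right) entry matches $\overline b$ and $\overline d$ modulo $I$; such a $t$ exists because the two candidate matrices differ by right multiplication by an element of $\SL_2(A/I)$ fixing the first column, which is necessarily of the form $E_{12}(\overline t)$, and $\overline t$ lifts to $A$. The product then maps to $\overline g$.

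The main obstacle is the second step: lifting a row that is unimodular modulo $I$ to one that is unimodular over $A$. Over a general commutative ring this can fail, but for a Euclidean domain $A$ it holds, and the cleanest route is probably to exploit that $A$ is a PID: writing $e = \gcd(a,c)$, one has $a = e a'$, $c = e c'$ with $a'A + c'A = A$, and the condition $aA + cA + I = A$ forces $eA + I = A$; a short manipulation then lets one replace $c$ by $c + \lambda$ for suitable $\lambda \in I$ so that $\gcd(a, c+\lambda)$ becomes a unit. Alternatively, since $A$ is Euclidean one can invoke directly that every finitely generated projective $A$-module is free together with the fact that $\SL_2(A)$ acts transitively on unimodular rows of length two, phrased so as to apply to the reduction map; this is in the same circle of ideas as Theorem \ref{VL} and the $\GE_2$-property of Euclidean domains recalled above. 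Once that lifting is in hand, the remaining steps are the routine column-and-row bookkeeping described above.
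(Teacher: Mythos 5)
Your argument is correct in outline but takes a genuinely different route from the paper's, which is essentially one line: for nontrivial $I$ the quotient $A/I$ is a \emph{semilocal} ring (a Euclidean domain is a PID, so only finitely many maximal ideals contain $I$), semilocal rings are $\GE_2$-rings by the result of Silvester quoted in this section, hence $\SL_2(A/I)$ is generated by the elementary matrices $E_{ij}(\overline a)$, and these visibly lift to $E_{ij}(a)$. So the premise of your opening paragraph --- that one cannot invoke $\Ee_2(A/I)=\SL_2(A/I)$ --- is exactly where you diverge: that identity is available, not because $A/I$ is Euclidean but because it is semilocal. Your alternative (lift the first column to a unimodular row over $A$, complete it to an element of $\SL_2(A)$, correct by a right factor $E_{12}(t)$) is the classical argument that works more generally for Dedekind domains, and your first and third steps are fine. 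The crux, step two, is only sketched, and in the form ``replace $c$ by $c+\lambda$, $\lambda\in I$'' it can fail: for $A=\z$, $I=(5)$, $a=0$, $c=3$, no $\lambda\in I$ makes $\gcd(a,c+\lambda)$ a unit. One should first arrange $a\neq 0$ (replacing a zero lift of $\overline a$ by a generator $f$ of $I$) and then run a Chinese-remainder argument over the finitely many primes $p$ dividing $a$: if $p\mid f$ then $p\nmid c$ automatically from $aA+cA+I=A$, and if $p\nmid f$ one can choose $\lambda=f\mu$ with $c+f\mu\not\equiv 0 \bmod p$. With that supplied your proof closes; the trade-off is that your route is self-contained over a PID, while the paper's outsources everything to the $\GE_2$-property of semilocal rings and is shorter.
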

\begin{proof}
If $I=(0)$, there is nothing to prove. Thus, let $I$ be a nontrivial ideal of $A$. Then $A/I$ is semi-local 
and thus it is a $\GE_2$-ring. It follows from this that $\SL_2(A/I)$ is generated by elementary matrices 
$E_{12}(\overline{a})$ and $E_{21}(\overline{a})$, $a\in A$. Since $\pi_\ast(E_{ij}(a))=E_{ij}(\overline{a})$, $\pi_\ast$ 
is surjective.
\end{proof}

Let $I$ be an ideal of a Euclidean domain $A$. Let $\Gamma(A,I)$ be the kernel of the surjective map
$\pi_\ast:\SL_2(A) \arr \SL_2(A/I)$. Hence,
\[
\Gamma(A, I) :=\bigg\{ {\mtxx a b c d} \in 
\begin{array}{c}
\SL_2(A) 
\end{array}
:  b, c, a-1, d-1 \in I\bigg\}.
\]
Subgroups of the form $\Gamma(A,I)$, for some nontrivial ideal $I$, are called {\it principal congruence 
subgroups} of $\SL_2(A)$. A subgroup of $\SL_2(A)$ is called a {\it congruence subgroup} if it
contains a principal congruence subgroup.

\begin{lem}\label{coprime}
Let $I$ and $J$ be two coprime ideals of a Euclidean domain $A$. Then the composite
$\Gamma(A, I) \arr \SL_2(A) \arr \SL_2(A/J)$ is surjective.
\end{lem}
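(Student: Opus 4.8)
The plan is to combine the two surjectivity results already available, namely Lemma~\ref{surj-GE2} (which handles a single quotient) and Theorem~\ref{VL} (which controls a congruence subgroup attached to a product of ideals), using the Chinese Remainder Theorem to exploit coprimality. First I would fix elements $x\in A$ and use the hypothesis $I+J=A$ to write $1=i+j$ with $i\in I$, $j\in J$; then for any $a\in A$ the element $ai$ lies in $I$ and satisfies $ai\equiv a\pmod J$. Thus $E_{12}(ai)\in \Gamma(A,I)$ maps to $E_{12}(\overline a)$ in $\SL_2(A/J)$, and similarly $E_{21}(ai)\in\Gamma(A,I)$ maps to $E_{21}(\overline a)$. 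Since $A/J$ is semilocal, hence a $\GE_2$-ring, $\SL_2(A/J)$ is generated by such elementary matrices $E_{12}(\overline a)$, $E_{21}(\overline a)$; as each of these is hit from $\Gamma(A,I)$, the composite $\Gamma(A,I)\arr\SL_2(A)\arr\SL_2(A/J)$ is surjective.

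An alternative, slightly more structural route is to observe that $\Gamma(A,I)=\ker(\SL_2(A)\arr\SL_2(A/I))$, that $\SL_2(A)\arr\SL_2(A/IJ)$ is surjective by Lemma~\ref{surj-GE2}, and that $A/IJ\simeq A/I\times A/J$ by the Chinese Remainder Theorem, whence $\SL_2(A/IJ)\simeq\SL_2(A/I)\times\SL_2(A/J)$. Under this identification $\Gamma(A,I)$ surjects onto $\{e\}\times\SL_2(A/J)$: indeed, given $g\in\SL_2(A/J)$, lift $(e,g)$ to some $\tilde g\in\SL_2(A)$, which then lies in $\Gamma(A,I)$ because its image in $\SL_2(A/I)$ is trivial, and maps to $g$ in $\SL_2(A/J)$. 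Either argument is short; I would present the first, since it is self-contained and makes the elementary-matrix bookkeeping explicit.

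There is no serious obstacle here; the only point requiring a little care is the verification that the modified elementary matrix $E_{12}(ai)$ genuinely lies in the principal congruence subgroup $\Gamma(A,I)$ — i.e. that all the relevant entries ($b=ai$, together with $a-1=0$, $d-1=0$, $c=0$) lie in $I$ — and that it still reduces to $E_{12}(\overline a)$ modulo $J$, which is exactly the congruence $ai\equiv a\pmod J$ coming from $i\equiv 1\pmod J$. One also uses implicitly that $\Gamma(A,I)$ is a subgroup, so that arbitrary products of such matrices remain in it and their images generate all of $\SL_2(A/J)$.
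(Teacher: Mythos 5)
Your proof is correct. The paper itself gives no argument for this lemma --- it simply cites \cite[Lemma 5.12]{h2016} --- so there is no in-text proof to compare against; your first argument is the standard one and is almost certainly the content of Hutchinson's proof. Both of your routes are sound: writing $1=i+j$ with $i\in I$, $j\in J$, the matrix $E_{12}(ai)$ does lie in $\Gamma(A,I)$ (all four defining conditions are satisfied, as you check) and reduces to $E_{12}(\overline a)$ modulo $J$, and since $A/J$ is a $\GE_2$-ring these images generate $\SL_2(A/J)$; the alternative via $\SL_2(A/IJ)\simeq \SL_2(A/I)\times\SL_2(A/J)$ and lifting $(e,g)$ is equally valid and uses only Lemma~\ref{surj-GE2} plus the Chinese Remainder Theorem. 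The only pedantic point worth a half-sentence in a write-up: invoking semilocality of $A/J$ (or surjectivity of $\SL_2(A)\arr\SL_2(A/IJ)$) tacitly assumes $J\neq(0)$; but if $J=(0)$ then coprimality forces $I=A$, so $\Gamma(A,I)=\SL_2(A)$ and the claim is trivial. With that remark either version of your argument is complete.
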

\begin{proof}
See \cite[Lemma 5.12]{h2016}.
\end{proof}

Let $A$ be a Euclidean domain such that for any nontrivial ideal $I$, $A/I$ is a finite ring. Then 
any principal congruence subgroup of $\SL_2(A)$ is of finite index. We say that $\SL_2(A)$ has the 
{\it congruence subgroup property} if any finite-index subgroup of $\SL_2(A)$ is a congruence subgroup.

The next theorem is a special case of a general result due to Serre (see \cite{serre1970} and \cite{men1967}) 
and shows that, for any $n>1$, $\SL_2(\z[1/n])$ has the congruence subgroup property.

\begin{thm}[Congruence subgroup property]\label{cong}
Let $\Gamma$ be a non-central normal subgroup of $\SL_2(\z[1/n])$, where $n > 1$. Then $\Gamma$ 
contains a subgroup of the form $\Gamma(\z[1/n], I)$, for some nontrivial ideal $I$ of $\z[1/n]$. 
In particular, $\Gamma$ is of finite index in $\SL_2(\z[1/n])$.
\end{thm}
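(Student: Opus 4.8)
The plan is to deduce this from Serre's solution of the congruence subgroup problem for $\SL_2$ over $S$-arithmetic rings, combined with the standard fact that, for these rings, the congruence kernel is trivial. Concretely, Serre proved in \cite{serre1970} that $\SL_2(\z[1/n])$ has the congruence subgroup property for $n>1$ (this fails for $\SL_2(\z)$, the excluded case $n=1$, where $\SL_2(\z)$ has plenty of non-congruence finite-index subgroups). So the heart of the matter is \emph{not} to reprove Serre's theorem but to reduce the stated assertion about an arbitrary non-central normal subgroup $\Gamma$ to the already-available statement about finite-index subgroups, and then to upgrade ``contains a congruence subgroup'' to ``contains a \emph{principal} congruence subgroup $\Gamma(\z[1/n],I)$''.

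First I would show that a non-central normal subgroup $\Gamma\trianglelefteq\SL_2(\z[1/n])$ automatically has finite index. For this I would invoke the bounded-generation / normal-subgroup structure theory for $\SL_2$ of rings of arithmetic type with infinitely many units — here $\z[1/n]$ has infinitely many units as soon as $n>1$, and $\SL_2(\z[1/n])$ is generated by elementary matrices (stated earlier via \cite{men1967}, \cite{carl2024}). The key input is Theorem~\ref{VL} (Vaserstein--Liehl): the ``relative elementary'' subgroup $\widetilde\Gamma(I_1,I_2)$ generated by $E_{12}(x)$, $x\in I_1$, and $E_{21}(y)$, $y\in I_2$ is of finite index. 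So it suffices to produce, inside $\Gamma$, one nontrivial elementary matrix $E_{12}(x)$ with $x\neq 0$ (equivalently a nonzero transvection): then normality of $\Gamma$ forces $E_{12}(ux^{})\in\Gamma$ for all units $u$ and, after conjugating, also $E_{21}$-type elements, and from $\SL_2$-commutator identities one gets that $\Gamma$ contains $\widetilde\Gamma(I_1,I_2)$ for suitable nonzero ideals $I_1,I_2$, hence is of finite index. To find that one nontrivial transvection: take $g\in\Gamma$ not central; if $g$ is not upper/lower triangularizable one uses a commutator $[g,E_{12}(t)]$, which for generic $t$ is a nontrivial element fixing a line, i.e.\ conjugate to an elementary matrix; the non-centrality of $\Gamma$ guarantees such a $g$ exists. (This is the classical ``a normal subgroup of $\SL_2$ of a Dedekind arithmetic ring with infinitely many units either is central or contains a congruence subgroup'' argument; I would cite \cite{serre1970}, \cite{men1967}, \cite{vas1972} rather than redo it.)

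Next, once $\Gamma$ is known to be of finite index, Serre's congruence subgroup property (\cite{serre1970}; see also \cite{men1967}) gives that $\Gamma$ \emph{is} a congruence subgroup, i.e.\ $\Gamma\supseteq\Gamma(\z[1/n],J)$ for some nontrivial ideal $J$. It remains only to note that every congruence subgroup contains a \emph{principal} one — but that is exactly the definition of congruence subgroup used in the excerpt, so $J$ itself works and we may take $I=J$. Finally, since $\z[1/n]/I$ is a finite ring for every nonzero ideal $I$ (every proper quotient of $\z[1/n]$ is finite), $\Gamma(\z[1/n],I)=\ker\big(\SL_2(\z[1/n])\to\SL_2(\z[1/n]/I)\big)$ has finite index, hence so does the larger group $\Gamma$; this gives the ``in particular'' clause.

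The main obstacle is the first step — establishing finite index for an a~priori merely non-central normal subgroup — since the CSP as usually quoted is a statement about finite-index subgroups, and bridging that gap honestly requires the normal-subgroup theorem for $\SL_2$ over $\z[1/n]$ (the ``$\SL_2$ sandwich'' controlling normal subgroups between a relative elementary subgroup and a relative congruence subgroup). I expect the cleanest route is to cite the relevant results of Serre \cite{serre1970} and Mennicke \cite{men1967} for this reduction and then let Theorem~\ref{VL} and the finiteness of $\z[1/n]/I$ finish things off; the remaining steps are essentially bookkeeping with the definitions of (principal) congruence subgroup already fixed above.
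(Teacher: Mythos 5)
Your proposal is correct and ultimately rests on the same source as the paper, whose entire proof is a one-line citation of \cite[Proposition 2]{serre1970}; that proposition already asserts directly that every non-central normal subgroup of $\SL_2(\z[1/n])$ contains a principal congruence subgroup, so your intermediate reduction (first establishing finite index via a transvection argument and Theorem~\ref{VL}, then invoking the finite-index form of the CSP) is a harmless detour rather than a necessary bridge. The only caveat is that your sketch of producing a transvection from a commutator $[g,E_{12}(t)]$ is looser than the actual classical argument, but since you defer to the cited literature for that step, nothing essential is missing.
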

\begin{proof}
This is a special case of \cite[Proposition 2]{serre1970}. 
\end{proof}

Let $n$ be a natural number and $p$ a prime such that $p\nmid n$. Let
\[
\begin{array}{c}
\SL_2(\z[1/n])^{\pi_p}:=\pi_p^{-1} \SL_2(\z[1/n]) \pi_p
=\bigg\{{\mtxx a {p^{-1}b} {pc} d}\in \SL_2(\q):{\mtxx a b c d}\in 
\SL_2(\z[1/n]) \bigg\},
\end{array}
\]
where $\pi_p:={\mtxx p 0 0 1} \in \GL_2(\q)$. It is easy to see that
\[
\begin{array}{c}
\SL_2(\z[1/n])\cap \SL_2(\z[1/n])^{\pi_p}
=\bigg\{{\mtxx a b c d}\in \SL_2(\z[1/n]) : p\mid c\bigg\}.
\end{array}
\]
In the introduction we denoted this group by $\Gamma_0(n, p)$. Now, consider the following inclusions:
\begin{align*}
& \begin{array}{c} i_1:\Gamma_0(n,p) \harr \SL_2(\z[1/n]),\end{array} & &
\begin{array}{c} i_2: \Gamma_0(n,p) \harr \SL_2(\z[1/n])^{\pi_p},\end{array} \\
& \begin{array}{c} j_1:\SL_2(\z[1/n]) \harr \SL_2(\z[1/pn]), \end{array}&&
\begin{array}{c} j_2: \SL_2(\z[1/n])^{\pi_p} \harr \SL_2(\z[1/pn]). \end{array}
\end{align*}

As established in \cite{serre1980}, the Theory of Trees provides a proof for the following well-known result.

\begin{thm}\label{iso-amal}
Let $n$ be a natural number and $p$ a prime such that $p\nmid n$. Then
\[
\begin{array}{c}
\SL_2(\z[1/pn])\simeq \SL_2(\z[1/n]) \ast_{\Gamma_0(n,p)} \SL_2(\z[1/n])^{\pi_p}.
\end{array}
\]
\end{thm}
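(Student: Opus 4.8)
The plan is to realize $\SL_2(\z[1/pn])$ as the fundamental group of a graph of groups coming from the action on a tree, using Serre's theory. I would begin by considering the Bruhat--Tits tree $T$ associated with $\SL_2(\q_p)$: its vertices are the homothety classes of $\z_p$-lattices in $\q_p^2$, and two classes are joined by an edge when they have representatives $L \supset L'$ with $L/L'$ of length one. This is a $(p+1)$-regular tree, and $\GL_2(\q_p)$ acts on it by left translation of lattices. The key point is that $\SL_2(\z[1/pn])$ embeds into $\SL_2(\q_p)$ (since $\z[1/pn] \se \z_p$ localized appropriately — more precisely $\z[1/pn] \se \q$ and $\q \se \q_p$), so it acts on $T$.

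Next I would identify the quotient graph and the stabilizers. The action of $\SL_2(\z[1/pn])$ on $T$ is without inversions (this is standard; one may pass to a barycentric subdivision if needed, but in fact for $\SL_2$ one checks directly no edge is flipped). I would show the quotient graph $\SL_2(\z[1/pn]) \backslash T$ is a single edge (a segment with two vertices $v_0$, $v_1$ and one edge $e$ joining them); this uses strong approximation / the fact that $\z[1/n]$ is a principal ideal domain so that $\SL_2(\z[1/pn])$ acts transitively on lattice classes at each of the two ``distances'' modulo the edge, i.e. the class number is one. Then I would compute the vertex stabilizers: the stabilizer of the standard vertex $v_0 = [\z_p^2]$ inside $\SL_2(\z[1/pn])$ is exactly $\SL_2(\z[1/pn]) \cap \SL_2(\z_p) = \SL_2(\z[1/n])$ (an element of $\SL_2(\z[1/pn])$ lies in $\SL_2(\z_p)$ iff it has no $p$ in its denominators, i.e. iff it lies in $\SL_2(\z[1/n])$). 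Similarly, the stabilizer of the neighbouring vertex $v_1$, which one can take to be $[\pi_p \cdot \z_p^2]$ for $\pi_p = {\mtxx p 0 0 1}$, is the conjugate $\pi_p \SL_2(\z[1/n]) \pi_p^{-1} \cap \SL_2(\z[1/pn])$; conjugating back this is the group $\SL_2(\z[1/n])^{\pi_p}$ as defined just above the theorem (after matching conventions $\pi_p^{-1} G \pi_p$ versus $\pi_p G \pi_p^{-1}$, choosing the representative vertex accordingly). Finally, the edge stabilizer is the intersection of the two vertex stabilizers, which by the displayed computation in the text equals $\Gamma_0(n,p)$.

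Having identified the graph of groups — a segment with vertex groups $\SL_2(\z[1/n])$ and $\SL_2(\z[1/n])^{\pi_p}$, edge group $\Gamma_0(n,p)$, and edge maps the inclusions $i_1$, $i_2$ — I would invoke the structure theorem for groups acting on trees (\cite[\S5.4, Theorem 13]{serre1980}): a group acting without inversion on a tree, with quotient a segment, is the amalgamated free product of the two vertex stabilizers over the edge stabilizer. This yields exactly
\[
\SL_2(\z[1/pn]) \simeq \SL_2(\z[1/n]) \ast_{\Gamma_0(n,p)} \SL_2(\z[1/n])^{\pi_p},
\]
as claimed.

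The main obstacle is establishing that the quotient graph is a single edge, i.e. that $\SL_2(\z[1/pn])$ acts with exactly two orbits of vertices and one orbit of edges. The two-orbit claim for vertices is a bipartiteness-plus-transitivity statement: the tree is bipartite according to the parity of the lattice index, $\SL_2$ preserves this parity (it has determinant one), so there are at least two orbits, and transitivity within each part is where the principal ideal domain property of $\z[1/n]$ (equivalently, triviality of the relevant class group, or an elementary-divisors argument over the PID $\z[1/pn]$) enters. One shows any lattice of a given parity is $\SL_2(\z[1/pn])$-equivalent to the standard one by putting a basis in Smith normal form over $\z[1/pn]$ and adjusting the $\GL$-to-$\SL$ discrepancy using the scaling available at $p$. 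Rather than reproduce all of this, I would cite \cite[p.~80]{serre1980} and \cite{serre1970}, where precisely this computation is carried out, and record the stabilizer identifications above as the only point needing the explicit matrix bookkeeping with $\pi_p$.
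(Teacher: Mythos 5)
Your proposal is correct and is essentially the proof the paper points to: the paper's "proof" is just a citation to Serre's \emph{Trees}, p.~80, where exactly this argument — the action of $\SL_2(\z[1/pn])$ on the Bruhat--Tits tree of $\SL_2(\q_p)$, the identification of the quotient as a segment via bipartiteness plus transitivity on each type, and the computation of the vertex and edge stabilizers as $\SL_2(\z[1/n])$, its $\pi_p$-conjugate, and $\Gamma_0(n,p)$ — is carried out, followed by Serre's structure theorem for actions with quotient a segment. The only points to watch are the ones you already flag: the left-versus-right conjugation convention in the definition of $\SL_2(\z[1/n])^{\pi_p}$ (fixed by choosing $[\pi_p^{-1}\z_p^2]$ as the second vertex) and the transitivity step, for which either the density/openness argument or your Smith-normal-form argument over the PID $\z[1/pn]$ works.
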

\begin{proof}
See \cite[p. 80]{serre1980}.
\end{proof}

If we replace $i_2$ and $j_2$ by the injective maps
\[
\begin{array}{c}
i_2': \Gamma_0(n,p) \arr \SL_2(\z[1/n]) 
\ \ \ \ \text{ and }
\ \ \ \
j_2': \SL_2(\z[1/n]) \arr \SL_2(\z[1/pn]),
\end{array}
\]
which are given by 
\[
\begin{array}{c}
{\mtxx a b c d}\mapsto {\mtxx a {pb} {p^{-1}c} d} \ \ \textrm{and} \ \ {\mtxx a b c d}\mapsto {\mtxx a {p^{-1}b} {pc} d},
\end{array}
\]
respectively, then the above isomorphism finds the following form
\begin{equation*}
\begin{array}{c}
\SL_2(\z[1/pn])\simeq \SL_2(\z[1/n]) \ast_{\Gamma_0(n,p)} \SL_2(\z[1/n]).
\end{array}
\end{equation*}

For a prime $p$, let $\F_p:=\z/p$ be the prime field with $p$ elements. If $p\nmid n$, then the 
natural map $\z[1/n] \arr \F_p$, $a/n^r\mapsto \overline{a}/\overline{n}^{r}$, induces 
the natural surjective homomorphisms 
\[
\begin{array}{c}
\SL_2(\z[1/n]) \two \SL_2(\F_p), \ \ \ \ \ \ \Gamma_0(n,p) \two \B(\F_p),
\end{array}
\]
where
\[
\B(\F_p):=\bigg\{ {\mtxx a b 0 {a^{-1}}}: a\in \F_p^\times, b\in \F_p\bigg\} \se \SL_2(\F_p).
\]
Thus, we have the morphism of extensions
\begin{equation}\label{ext}
\begin{tikzcd}
1 \ar[r] & \Gamma(n,p) \ar[r]\ar[d, equal]& \Gamma_0(n,p) \ar[r]\ar[d, "i_1"]& \B(\F_p) 
\ar[r] \ar[d, hook]& 1\\
1 \ar[r] & \Gamma(n,p) \ar[r]& \SL_2(\z[1/n]) \ar[r]& \SL_2(\F_p) \ar[r]& 1,
\end{tikzcd}
\end{equation}
where 
\[
\begin{array}{c}
\Gamma(n,p):=\bigg\{{\mtxx a b c d}\in \SL_2(\z[1/n]) : p\mid b,c, a-1,d-1\bigg\}
=\Gamma(\z[1/n], \lan p\ran). 
\end{array}
\]
Observe that 
\[
\begin{array}{c}
[\SL_2(\z[1/n]): \Gamma_0(n,p)]=p+1, \ \ \ \ \ [\Gamma_0(n,p): \Gamma(n,p)]=p(p-1),
\end{array}
\]
and thus
\begin{equation}\label{eq0}
\begin{array}{c}
[\SL_2(\z[1/n]): \Gamma(n,p)]=p(p^2 - 1).
\end{array}
\end{equation}
Let 
\[
\begin{array}{c}
\Gamma(n, p^k):=\Gamma(\z[1/n], \lan p^k\ran).
\end{array}
\]
\begin{lem}\label{p3k}
If $p$ is a prime and $n$ is not divisible by $p$, then
\[
[\Gamma(n,p):\Gamma(n, p^k)]=p^{3(k-1)}. 
\]
\end{lem}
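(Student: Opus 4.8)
The plan is to compute the index $[\Gamma(n,p):\Gamma(n,p^k)]$ by relating both groups to principal congruence subgroups of $\SL_2$ over finite rings. First I would observe that $\Gamma(n,p^k)$ is the kernel of the reduction map $\SL_2(\z[1/n])\arr\SL_2(\z[1/n]/\langle p^k\rangle)$, and since $p\nmid n$ we have a ring isomorphism $\z[1/n]/\langle p^k\rangle\cong\z/p^k$. By Lemma \ref{surj-GE2} this reduction map is surjective, so $\SL_2(\z[1/n])/\Gamma(n,p^k)\cong\SL_2(\z/p^k)$, and likewise $\SL_2(\z[1/n])/\Gamma(n,p)\cong\SL_2(\z/p)$. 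Therefore
\[
[\Gamma(n,p):\Gamma(n,p^k)]=\frac{[\SL_2(\z[1/n]):\Gamma(n,p^k)]}{[\SL_2(\z[1/n]):\Gamma(n,p)]}=\frac{|\SL_2(\z/p^k)|}{|\SL_2(\z/p)|}.
\]

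The second step is the well-known order formula $|\SL_2(\z/p^k)|=p^{3(k-1)}|\SL_2(\z/p)|=p^{3(k-1)}\cdot p(p^2-1)$; this follows because the reduction $\SL_2(\z/p^k)\two\SL_2(\z/p)$ is surjective with kernel the matrices congruent to the identity mod $p$, and that kernel has order $p^{3(k-1)}$ (it is filtered by the subgroups of matrices congruent to $I$ mod $p^j$, each successive quotient being isomorphic to the additive group of trace-zero $2\times2$ matrices over $\z/p$, which has order $p^3$). Substituting gives $[\Gamma(n,p):\Gamma(n,p^k)]=p^{3(k-1)}$, as claimed.

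An alternative, more self-contained route avoids quoting $|\SL_2(\z/p^k)|$: filter $\Gamma(n,p)$ directly by the subgroups $\Gamma(n,p^j)$ for $1\le j\le k$, so that $[\Gamma(n,p):\Gamma(n,p^k)]=\prod_{j=1}^{k-1}[\Gamma(n,p^j):\Gamma(n,p^{j+1})]$, and identify each factor with the kernel of $\SL_2(\z/p^{j+1})\arr\SL_2(\z/p^j)$, which one checks is the abelian group $\{I+p^j M:M\in M_2(\z/p),\ \operatorname{tr}M\equiv0\}$ of order $p^3$. Either way the computation is routine.

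The main obstacle, such as it is, is purely bookkeeping: one must be careful that the reduction maps are genuinely surjective (handled by Lemma \ref{surj-GE2}) and that the congruence conditions defining $\Gamma(n,p^k)$ over $\z[1/n]$ really do match those over $\z/p^k$ — i.e. that inverting $n$ causes no trouble modulo a power of $p$ coprime to $n$. Since $n$ is a unit mod $p^k$, this is immediate, so no real difficulty arises.
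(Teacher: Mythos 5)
Your argument is correct. The paper itself gives no proof here, simply citing \cite[Corollary 5.11]{h2016}; what you have written is precisely the standard counting argument behind that reference: surjectivity of reduction (the paper's Lemma \ref{surj-GE2}) identifies $\SL_2(\z[1/n])/\Gamma(n,p^k)$ with $\SL_2(\z/p^k)$ (using that $n$ is a unit mod $p^k$), and the order formula $|\SL_2(\z/p^k)|=p^{3k-2}(p^2-1)$ — which the paper also quotes later from \cite[p.~248]{h2016} — gives the index $p^{3(k-1)}$. Both your main route and your filtration-by-$\Gamma(n,p^j)$ alternative are sound; no gaps.
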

\begin{proof}
See \cite[Corollary 5.11]{h2016}.
\end{proof}

\section{The first and second homology groups of \texorpdfstring{$\SL_2(\z[1/n])$}{Lg}}\label{sec2}

Understanding the exact structure of the finitely generated groups $H_k(\SL_2(\z[1/n]),\z)$ is essential 
for various applications. While $H_0(\SL_2(\z[1/n]),\z)$ is simply $\z$, the higher homology groups are 
more complex. The structural theorem for $H_1(\SL_2(\z[1/n]),\z)$, proved independently and with different 
methods in \cite{B-E2024} and \cite{carl2024}, will be a fundamental tool in this work.

\begin{thm}
\label{H1}
If $n>1$ is an integer, then 
\[
\begin{array}{c}
H_1(\SL_2(\z[1/n]),\z) \simeq 
\end{array}
\begin{cases}
0   & \text{if $2\mid n$, $3\mid n$}  \\
\z/3   & \text{if $2\mid n$, $3\nmid n$}\\
\z/4   & \text{if $2\nmid n$, $3\mid n$}\\
\z/12 &  \text{if $2\nmid  n$, $3\nmid n$}
\end{cases},
\]
which is induced by the map  $\z[1/n] \arr \SL_2(\z[1/n])^\ab$, 
$a \mapsto \overline{E_{12}(a)}$.
\end{thm}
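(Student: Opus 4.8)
The plan is to compute the abelianization of $\SL_2(\z[1/n])$ directly from the amalgamated product structure, and then to identify this abelian group with the stated cases. First I would recall, from the discussion following Theorem \ref{iso-amal}, that $\SL_2(\z[1/n])$ is generated by the two matrices $E_{21}(1)$ and $E_{12}(-1/n)$; hence $\SL_2(\z[1/n])^\ab$ is generated by the images of these two elements, and in particular is a cyclic or at most $2$-generated abelian group. Since $H_1$ is finitely generated and, by the congruence subgroup property (Theorem \ref{cong}) together with Theorem \ref{VL}, the elementary matrices with entries in a suitable finite-index ideal generate a finite-index subgroup whose image in the abelianization is trivial (elementary matrices $E_{12}(x)$ with $x$ in the relevant ideal are products of commutators-type relations, forcing torsion), we conclude $H_1(\SL_2(\z[1/n]),\z)$ is finite. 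The map $\z[1/n]\to \SL_2(\z[1/n])^\ab$, $a\mapsto \overline{E_{12}(a)}$, is a group homomorphism from $(\z[1/n],+)$ because $E_{12}(a)E_{12}(b)=E_{12}(a+b)$; moreover conjugating $E_{12}(a)$ by the diagonal matrix $\diag(u,u^{-1})$ gives $E_{12}(u^2 a)$, so in the abelianization $\overline{E_{12}(a)}=\overline{E_{12}(u^2 a)}$ for every unit $u\in \z[1/n]^\times$. Thus the homomorphism factors through the quotient of $\z[1/n]$ by the subgroup generated by all $(u^2-1)a$, and in fact one checks it is surjective onto $\SL_2(\z[1/n])^\ab$ since $E_{21}(a)$ is conjugate to $E_{12}(-a)$ by the Weyl element $\mtxx 0 1 {-1} 0$.

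The core computation is then to determine the image of $\z$ inside this quotient. The units of $\z[1/n]$ are $\pm p_1^{k_1}\cdots p_s^{k_s}$ where $p_1,\dots,p_s$ are the primes dividing $n$; squaring, the subgroup of $\z[1/n]$ generated by $\{(u^2-1)\z[1/n]: u\in \z[1/n]^\times\}$ is generated by the elements $p^2-1$ for $p\mid n$ together with $(-1)^2-1=0$, after clearing denominators by units. A short number-theoretic argument shows that the ideal of $\z$ generated by $\{p^2-1 : p\mid n\}$, combined with the fact that we are working modulo the full ideal inside $\z[1/n]$ (so we may invert the $p_i$), yields precisely: if $2\mid n$ then $3=2^2-1$ is in the ideal and $2$ is invertible, giving the cyclic group killed by $\gcd$ of the $p^2-1$; running through the four cases on divisibility by $2$ and $3$ gives $\gcd$ values $1$ ($2,3\mid n$), $3$ ($2\mid n$, $3\nmid n$), $8$ but with $2$ invertible so effectively... — here one must be careful: when $2\nmid n$, $2$ is \emph{not} invertible in $\z[1/n]$, and $p^2-1$ for odd $p$ is always divisible by $8$ and by $3$, so the $\gcd$ is $24$ unless $3\mid n$ in which case $3^2-1=8$ forces it down to $8$... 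I should double-check against the stated answers $\z/12$ and $\z/4$, which suggests the relevant relation set is generated by $p^2-1$ \emph{and} an extra relation coming from $SL_2(\z/4)$ or $SL_2(\z/3)$ abelianizations — the $2$-torsion discrepancy ($8$ versus $4$, $24$ versus $12$) indicates that $E_{12}(1)^2$ or a Steinberg-type relation provides a factor of $2$. The cleanest route is to invoke the known computations of $H_1(\SL_2(\z/q),\z)$ for small $q$ and the explicit generators/relations from \cite{carl2024} directly.

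Given that a self-contained derivation requires reproving a result already established twice in the literature, the honest approach for this paper is to \emph{cite} it: the proof reduces to quoting \cite[Theorem]{B-E2024} or the relevant statement of \cite{carl2024}, observing only that the two sources agree and that the identification of the generating map $a\mapsto \overline{E_{12}(a)}$ is part of their statements. The main obstacle in any \emph{independent} proof would be pinning down the exact $2$-primary and $3$-primary torsion: getting the $\gcd$-type heuristic to produce $12$ and $4$ rather than $24$ and $8$ requires the extra Mennicke/Steinberg relation among the elementary generators, which is exactly the delicate input of the cited works; everything else (finite generation, cyclicity, the shape of the map) is formal.
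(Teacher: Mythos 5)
Your proposal ends exactly where the paper does: Theorem \ref{H1} is proved in the text by citation alone (``See \cite[Proposition 4.4]{B-E2024} or \cite[Theorem 1.2]{carl2024}''), and your final recommendation to quote those sources, after correctly diagnosing that the naive $\gcd\{p^2-1\}$ heuristic overshoots by a factor of $2$ (giving $24$ and $8$ instead of $12$ and $4$) and that closing that gap is precisely the delicate content of the cited works, is the same approach. No gap to report.
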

\begin{proof}
See \cite[Proposition 4.4]{B-E2024} or \cite[Theorem 1.2]{carl2024}.
\end{proof}

The proof of the above theorem in \cite{B-E2024} relies partly on the following result, which will 
also be essential in some of the proofs presented in this article.

\begin{prp}\label{exa-2}
Let $A$ be a commutative local ring with maximal ideal $\mmm_A$. Then
\[
H_1(\SL_2(A),\z) \simeq 
\begin{cases}
A/\mmm_A^2 &  \text{if $|A/\mmm_A|=2$}  \\
A/\mmm_A &  \text{if $|A/\mmm_A|=3$}  \\
0 &  \text{if $|A/\mmm_A|\geq  4$}  \\
\end{cases}.
\]
\end{prp}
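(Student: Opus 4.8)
The plan is to compute $H_1(\SL_2(A),\z)=\SL_2(A)^{\ab}$ directly. Since $A$ is local it is a $\GE_2$-ring, so $\SL_2(A)=\Ee_2(A)$ is generated by the matrices $E_{12}(a),E_{21}(a)$, and $a\mapsto\overline{E_{12}(a)}$ is an additive homomorphism $x\colon A\to\SL_2(A)^{\ab}$ which is surjective, since $\mtxx{0}{1}{-1}{0}$ conjugates $E_{12}(a)$ to $E_{21}(-a)$, so that $\overline{E_{21}(a)}=-x(a)$. The identity $[\diag(u,u^{-1}),E_{12}(a)]=E_{12}((u^2-1)a)$ for $u\in\aa$ shows that $x$ annihilates the ideal $I:=\lan u^2-1:u\in\aa\ran$. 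I would then argue according to $|A/\mmm_A|$.

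If $|A/\mmm_A|\ge 4$, the residue field contains an element outside $\{0,1,-1\}$, so there is $u\in\aa$ with $u^2-1\in\aa$; then $I=A$, so every $E_{12}(a)$ and every $E_{21}(a)$ is a commutator and $\SL_2(A)$ is perfect. If $|A/\mmm_A|=3$, every unit is $\equiv\pm 1\pmod{\mmm_A}$, whence $u^2-1\in\mmm_A$; writing $u=\pm(1+m)$ and using that $2$ is a unit one finds $I=\mmm_A$, so $x$ factors through $A/\mmm_A=\F_3$ and $H_1(\SL_2(A),\z)$ is cyclic of order dividing $3$. The same applies to $A=\F_3$, and since $\SL_2(\F_3)$ has the quaternion group as a normal Sylow $2$-subgroup with quotient $\z/3$ we get $H_1(\SL_2(\F_3),\z)=\z/3$; as reduction mod $\mmm_A$ surjects $H_1(\SL_2(A),\z)$ onto $H_1(\SL_2(\F_3),\z)$, we conclude $H_1(\SL_2(A),\z)\simeq\z/3=A/\mmm_A$.

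The case $|A/\mmm_A|=2$ is the substantial one, and the elementary relations above no longer suffice (for $A=\z/8$ one already has $I=0$ while $H_1=\z/4$). Here I would use the Lyndon--Hochschild--Serre five-term exact sequence of $1\to\Gamma(A,\mmm_A)\to\SL_2(A)\to\SL_2(\F_2)\to 1$; as $\SL_2(\F_2)\cong S_3$ has $H_2(S_3,\z)=0$ and $H_1(S_3,\z)=\z/2$, this reads
\[
0\larr H_1(\Gamma(A,\mmm_A),\z)_{\SL_2(\F_2)}\larr H_1(\SL_2(A),\z)\larr \z/2\larr 0 .
\]
To compute the coinvariants on the left I would filter $\Gamma(A,\mmm_A)$ by the congruence subgroups $\Gamma(A,\mmm_A^k)$: each successive quotient, as an $\SL_2(\F_2)$-module, is $\mathfrak{sl}_2(\F_2)\otimes_{\F_2}(\mmm_A^k/\mmm_A^{k+1})$, with the adjoint action on the first factor and the trivial action on the second. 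Now $\mathfrak{sl}_2(\F_2)\simeq\F_2\oplus V$ as $\SL_2(\F_2)$-modules, where $V$ is the $2$-dimensional simple module --- the extension splits because $V$ is projective over $\F_2[S_3]$ --- so $\mathfrak{sl}_2(\F_2)_{\SL_2(\F_2)}=\F_2$ while $V_{\SL_2(\F_2)}=0$. What makes the higher layers disappear is that over $\F_2$ one has $\diag(1,-1)=I$, so the diagonal matrices $\diag(1+c,(1+c)^{-1})$ with $c\in\mmm_A^k$ are, modulo $\Gamma(A,\mmm_A^{k+1})$, the scalar matrices $(1+c)I$; these account for the surviving $\F_2$-direction in layer $k$, and for $k\ge 2$ they are products of commutators of elements of $\Gamma(A,\mmm_A)$ (for instance $[E_{12}(a),E_{21}(b)]$ equals $\diag(1+ab,(1+ab)^{-1})$ up to a factor in $\Gamma(A,\mmm_A^3)$), hence vanish in $H_1(\Gamma(A,\mmm_A),\z)$. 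One thus obtains $H_1(\Gamma(A,\mmm_A),\z)_{\SL_2(\F_2)}\simeq\mmm_A/\mmm_A^2$, so $0\to\mmm_A/\mmm_A^2\to H_1(\SL_2(A),\z)\to\z/2\to 0$. This sequence is compatible, via $x$, with $0\to\mmm_A/\mmm_A^2\to A/\mmm_A^2\to\z/2\to 0$ --- because $x$ is the natural quotient $\mmm_A\to\mmm_A/\mmm_A^2$ on subobjects and the identity on $\z/2$ --- so the five lemma gives $H_1(\SL_2(A),\z)\simeq A/\mmm_A^2$.

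The step I expect to be the real obstacle is this last identification of the coinvariants --- equivalently, the statement that $x$ kills all of $\mmm_A^2$ and not merely the (possibly much smaller) ideal $I$. This is where one must genuinely leave the elementary-matrix calculus, using $H_2(\SL_2(\F_2),\z)=0$ together with the $\F_2[S_3]$-module structure of $\mathfrak{sl}_2(\F_2)$; the commutator trick above then peels off the higher congruence layers one at a time, and when $\mmm_A$ is nilpotent --- in particular for finite local rings, which is all that arises in the applications in this paper --- the filtration is finite and the argument is complete.
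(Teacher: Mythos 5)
The paper offers no proof of this proposition---it is quoted from [B-E2024, Proposition~4.1]---so your argument is necessarily independent of the source. The cases $|A/\mmm_A|=3$ and $|A/\mmm_A|\geq 4$ are handled correctly and in the standard way: the identity $[\diag(u,u^{-1}),E_{12}(a)]=E_{12}((u^2-1)a)$ kills the ideal $I=\lan u^2-1: u\in\aa\ran$, which is all of $A$ when the residue field has an element outside $\{0,\pm1\}$ and equals $\mmm_A$ when $|A/\mmm_A|=3$ (since $u^2-1=m(2+m)$ with $2+m$ a unit), and the surjection onto $H_1(\SL_2(\F_3),\z)\simeq\z/3$ pins the answer down. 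Your treatment of the residue-characteristic-two case is also sound as far as it goes: $H_2(S_3,\z)=0$ does give the five-term sequence, the layers $\Gamma(A,\mmm_A^k)/\Gamma(A,\mmm_A^{k+1})$ are $\mathfrak{sl}_2(\F_2)\otimes\mmm_A^k/\mmm_A^{k+1}$, the splitting $\mathfrak{sl}_2(\F_2)\simeq\F_2\oplus V$ with $V$ projective and $V_{S_3}=0$ is correct, and the commutators $[E_{12}(a),E_{21}(b)]\equiv\diag(1+ab,(1+ab)^{-1})$ do account for the scalar directions in levels $k\geq 2$. When $\mmm_A$ is nilpotent this yields $\Gamma(A,\mmm_A^2)\subseteq[\SL_2(A),\Gamma(A,\mmm_A)]$, hence the exact value of the coinvariants and, via the five lemma, the stated isomorphism; and this covers every invocation of the proposition in the present paper (the fields $\F_p$ in Section~3 and the finite local rings $\z/p^k$ in the proof of Theorem~4.2).

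The gap is that the proposition is asserted for an arbitrary commutative local ring, while your induction only produces $\Gamma(A,\mmm_A^2)\subseteq[\SL_2(A),\Gamma(A,\mmm_A)]\cdot\Gamma(A,\mmm_A^k)$ for every $k$; without $\mmm_A^N=0$ you cannot pass to the intersection, so the decisive identity $x(\mmm_A^2)=0$---equivalently the injectivity of the left vertical map in your five-lemma diagram---remains unproved. This is not a vacuous worry: for $A=\z_{(2)}$ every $u^2-1$ with $u\in\aa$ lies in $8\z_{(2)}$, so the unit relations kill only $\lan 8\ran$, whereas the proposition requires $E_{12}(4)\in[\SL_2(A),\SL_2(A)]$. (In that particular ring one can patch it by hand: $w=E_{21}(-1)E_{12}(1)E_{21}(-1)$ satisfies $w^4=I$ and $\overline{w}=3x(1)$, whence $x(12)=0$ and $x(4)=x(12)-x(8)=0$; but such supplementary relations do not obviously generate $\mmm_A^2$ in every non-Artinian local ring, so the general case still needs a genuinely different argument---or the citation.) As written, your proof is complete only under the additional hypothesis that $\mmm_A$ is nilpotent, which you flag, and which suffices for this paper but not for the proposition as stated.
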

\begin{proof}
See \cite[Proposition 4.1]{B-E2024}.
\end{proof}

This paper focuses on determining the group structure of $H_2(\SL_2(\z[1/n]),\z)$. Our proofs 
rely on two key results concerning this group, which are already established in the literature.

\begin{thm}[Adem-Naffah]\label{AN}
If $p$ is a prime number, then
\[
\begin{array}{c}
H_2(\SL_2(\z[1/p]), \z)\simeq \begin{cases}
\z & \text{if $p=2,3$}\\
\z^{(p-7)/6}\oplus \z/6 & \text{if $p\equiv 1 \pmod {12}$}\\
\z^{(p+1)/6}\oplus\z/2 & \text{if $p\equiv 5 \pmod {12}$}\\
\z^{(p-1)/6}\oplus\z/3 & \text{if $p\equiv 7 \pmod {12}$}\\
\z^{(p+7)/6} & \text{if $p\equiv 11 \!\!\!\pmod {12}$}\\
\end{cases}.
\end{array}
\]
\end{thm}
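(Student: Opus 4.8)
The plan is to apply, in the case $n=1$, the same Mayer--Vietoris machinery that the paper uses for general $n$. By Theorem~\ref{iso-amal} with $n=1$,
\[
\SL_2(\z[1/p])\simeq \SL_2(\z)\ast_{\Gamma_0(p)}\SL_2(\z),
\]
where $\Gamma_0(p):=\Gamma_0(1,p)=\{{\mtxx a b c d}\in\SL_2(\z):p\mid c\}$, the first copy of $\Gamma_0(p)$ embedded via $i_1$ and the second via the twisted inclusion $i_2'\colon {\mtxx a b c d}\mapsto {\mtxx a {pb} {c/p} d}$ of Section~\ref{sec1}. Feeding this amalgam into the Mayer--Vietoris sequence in integral homology and using the classical facts $H_1(\SL_2(\z),\z)\simeq\z/12$ and $H_2(\SL_2(\z),\z)=0$ (the latter, e.g., via $\SL_2(\z)\simeq\z/4\ast_{\z/2}\z/6$), the sequence collapses to
\[
0\larr H_2(\SL_2(\z[1/p]),\z)\larr H_1(\Gamma_0(p),\z)\overset{\varphi}{\larr} H_1(\SL_2(\z),\z)\oplus H_1(\SL_2(\z),\z),
\]
with $\varphi=\big((i_1)_\ast,-(i_2')_\ast\big)$. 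Thus $H_2(\SL_2(\z[1/p]),\z)\simeq\ker(i_1)_\ast\cap\ker(i_2')_\ast\se H_1(\Gamma_0(p),\z)$, and the whole problem is to understand this subgroup.

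I would next make $H_1(\Gamma_0(p),\z)$ explicit via the geometry of the modular curve $X_0(p)$. The quotient $\overline{\Gamma_0(p)}:=\Gamma_0(p)/\{\pm I\}$ is a Fuchsian group with two cusps, hence a free product $F_{2g+1}\ast(\z/2)^{\ast\nu_2}\ast(\z/3)^{\ast\nu_3}$, where $g=g_0(p)$ is the genus of $X_0(p)$ and $\nu_2,\nu_3$ count the elliptic points of orders $2$ and $3$, given by the classical formulas $\nu_2(p)=1+\big(\frac{-1}{p}\big)$, $\nu_3(p)=1+\big(\frac{-3}{p}\big)$ and $g_0(p)=\frac{p+1}{12}-\frac{\nu_2}{4}-\frac{\nu_3}{3}$ (so $g_0(2)=g_0(3)=0$). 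Using the central extension $1\to\{\pm I\}\to\Gamma_0(p)\to\overline{\Gamma_0(p)}\to 1$, the associated five-term exact sequence, and $H_2(\overline{\Gamma_0(p)},\z)=0$ (a free product of cyclic groups), one gets $H_1(\Gamma_0(p),\z)$: a free part of rank $2g_0(p)+1$, together with a finite part presented by the classes of lifts $\tilde x_1,\dots,\tilde x_{\nu_2}$ of the order-$2$ elliptic generators (with $\tilde x_i^2=-I$, since $-I$ is the only involution in $\SL_2(\z)$), lifts $\tilde y_1,\dots,\tilde y_{\nu_3}$ of the order-$3$ ones (with $\tilde y_j^3=-I$), and $-I$. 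As $\varphi$ has finite image, $\rank H_2(\SL_2(\z[1/p]),\z)=2g_0(p)+1$; evaluating this with the formulas above produces the exponents $\frac{p-7}{6},\frac{p+1}{6},\frac{p-1}{6},\frac{p+7}{6}$ for $p\equiv 1,5,7,11\pmod{12}$, and $1$ for $p=2,3$.

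The remaining step --- which I expect to be the main obstacle --- is to compute the torsion of $\ker(i_1)_\ast\cap\ker(i_2')_\ast$. The basic input: an element of $\SL_2(\z)$ of order $4$, $6$ or $2$ has abelianized class of the same order in $H_1(\SL_2(\z),\z)\simeq\z/12$, and both $i_1$ and $i_2'$ preserve the orders of the elliptic elements and of $-I$; hence each $\tilde x_i$ maps under both inclusions to an element of order $4$, each $\tilde y_j$ to one of order $6$, and $-I$ to the unique element of order $2$. Combined with the presentation above, $(i_1)_\ast$ alone already forces the torsion of $\ker(i_1)_\ast$ to be trivial when $\nu_2\le 1$ and $\nu_3\le 1$ (in particular for $p=2,3$ and for $p\equiv 11\pmod{12}$), and, when $\nu_2=2$ or $\nu_3=2$, to be contained in the subgroup generated by the difference classes $\tilde x_2-\tilde x_1$ (of order $2$) and $\tilde y_2-\tilde y_1$ (of order $3$). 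The delicate point, which I expect to require an explicit computation with the elliptic matrices of $\Gamma_0(p)$, is the converse: that these difference classes do lie in both $\ker(i_1)_\ast$ and $\ker(i_2')_\ast$ --- equivalently, that the two elliptic generators of a given order are homologous in $\SL_2(\z)$ under each of the two inclusions. Granting this, the torsion of $\ker(i_1)_\ast\cap\ker(i_2')_\ast$ is exactly $\z/6$, $\z/2$, $\z/3$ or $0$ according to whether $(\nu_2,\nu_3)$ is $(2,2)$, $(2,0)$, $(0,2)$ or $(0,0)$, i.e.\ according to whether $p\equiv 1,5,7,11\pmod{12}$, and is trivial for $p=2,3$; together with the rank count this yields the stated structure. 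In summary, the rank statement is essentially the Riemann--Hurwitz/genus formula for $X_0(p)$, while the torsion statement is where the actual work lies.
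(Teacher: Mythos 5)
Your proposal does not follow the paper's proof at all: the paper simply quotes Adem--Naffah's computation of $H^k(\SL_2(\z[1/p]),\z)$ and converts cohomology to homology via a universal-coefficient lemma. What you sketch is, in effect, a reconstruction of the Adem--Naffah computation itself, via the amalgam $\SL_2(\z[1/p])\simeq \SL_2(\z)\ast_{\Gamma_0(p)}\SL_2(\z)$, Mayer--Vietoris, and the virtually free structure of $\Gamma_0(p)$. The skeleton is sound: $H_2(\SL_2(\z),\z)=0$ does collapse the sequence to $H_2(\SL_2(\z[1/p]),\z)\simeq\ker(i_1)_\ast\cap\ker(i_2')_\ast\se H_1(\Gamma_0(p),\z)$, the free-product description of $\overline{\Gamma_0(p)}$ together with $H_2(\overline{\Gamma_0(p)},\z)=0$ and the five-term sequence does give $H_1(\Gamma_0(p),\z)\simeq\z^{2g+1}\oplus T$ with $|T|=2\cdot 2^{\nu_2}3^{\nu_3}$, and your rank bookkeeping via the genus formula checks out in all five congruence classes.

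The genuine gap is exactly where you place it, and it is not a formality that can be waved through. An element of order $4$ in $\SL_2(\z)$ is conjugate to $S={\mtxx 0 {-1} 1 0}$ or to $S^{-1}=-S$, and these are \emph{not} homologous: they map to $3$ and $9$ respectively in $H_1(\SL_2(\z),\z)\simeq\z/12$ (similarly the two homology classes of order-$6$ elements are $\pm$ a generator). One can always normalize the lifts $\tilde x_1,\tilde x_2$ so that $(i_1)_\ast\tilde x_1=(i_1)_\ast\tilde x_2$, but it must then be \emph{proved} that the same lifts also satisfy $(i_2')_\ast\tilde x_1=(i_2')_\ast\tilde x_2$ (and likewise for $\tilde y_1,\tilde y_2$). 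If, say, for some $p\equiv 5\pmod{12}$ the two order-$4$ generators landed in the same class under $i_1$ but in opposite classes under $i_2'$, then $\ker(i_1)_\ast\cap T=\lan\tilde x_2-\tilde x_1\ran$ and $\ker(i_2')_\ast\cap T=\lan\tilde x_1+\tilde x_2\ran$ would intersect trivially and the torsion of $H_2$ would be $0$, not $\z/2$. So the stated conclusion is not forced by anything you have written; it requires identifying the $\SL_2(\z)$-conjugacy classes (equivalently, the rotation directions) of the explicit elliptic matrices of $\Gamma_0(p)$ under both embeddings, or some substitute argument (e.g.\ a lower bound on the torsion coming from the transgression to $\F_p^\times$, in the spirit of the paper's $\delta_p$). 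Until that is supplied, the torsion half of the theorem remains unproved in your approach, even though the rank half is complete.
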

\begin{proof}
Adem and Naffah gave a complete description of the structure of the cohomology groups $H^k(\SL_2(\z[1/p]),\z)$, 
for any $k\geq 0$ (see \cite[pp. 7-9]{an1998}). Using this and \cite[Lemma~4.3]{B-E-2025}, one can calculate the
homology groups $H_k(\SL_2(\z[1/p]),\z)$. In particular, we obtain the group structure of 
$H_2(\SL_2(\z[1/p]), \z)$, as claimed in this theorem.
\end{proof}

\begin{thm}[Hutchinson]\label{H26}
Let $n$ be a square-free integer such that $6\mid n$. Then 
\[
\begin{array}{c}
H_2(\SL_2(\z[1/n]),\z)\simeq \z\oplus \bigoplus_{p\mid n} \z/(p-1).
\end{array}
\]
More generally, if $m\mid n$ and $6\mid m$, then we have the split exact sequence
\[
\begin{array}{c}
0 \arr H_2(\SL_2(\z[1/m]),\z) \arr H_2(\SL_2(\z[1/n]),\z) \arr 
\bigoplus_{ p\mid (n/m)} \F_p^\times \arr 1.
\end{array}
\]
\end{thm}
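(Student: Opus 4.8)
The plan is to determine $H_2(\SL_2(\z[1/n]),\z)$ by induction on the number of distinct prime divisors of $n$, the base case being $n=6$ and the inductive step consisting of adjoining one prime $p$ to a smaller modulus via the amalgamated product decomposition of Theorem~\ref{iso-amal} and its Mayer--Vietoris sequence; the more general split exact sequence is then obtained by tracking the inclusion‑induced maps. So assume $6\mid n'$ and $p\nmid n'$ (whence $p>3$), write $n=pn'$, and use the isomorphism $\SL_2(\z[1/n])\simeq\SL_2(\z[1/n'])\ast_{\Gamma_0(n',p)}\SL_2(\z[1/n'])$ together with its two inclusions $i_1,i_2'$ of $\Gamma_0(n',p)$ into $\SL_2(\z[1/n'])$.

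The Mayer--Vietoris sequence contains the exact segment
\[
H_2(\Gamma_0(n',p),\z)\xrightarrow{\psi}H_2(\SL_2(\z[1/n']),\z)^2\larr H_2(\SL_2(\z[1/n]),\z)\xrightarrow{\partial}H_1(\Gamma_0(n',p),\z)\xrightarrow{\phi}H_1(\SL_2(\z[1/n']),\z)^2\larr H_1(\SL_2(\z[1/n]),\z)\larr 0,
\]
and I would begin by collapsing its right‑hand tail. Since $6$ divides both $n'$ and $n$, Theorem~\ref{H1} gives $H_1(\SL_2(\z[1/n']),\z)=H_1(\SL_2(\z[1/n]),\z)=0$; and since $\gcd\{k^2-1:k\mid n'\}$ divides $\gcd(2^2-1,\,3^2-1)=1$, Theorem~\ref{G0}(i) gives $H_1(\Gamma_0(n',p),\z)\simeq\z/(p-1)$. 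Consequently $\phi=0$, $\partial$ is surjective, and the sequence degenerates to
\[
0\larr\coker(\psi)\larr H_2(\SL_2(\z[1/n]),\z)\xrightarrow{\partial}\z/(p-1)\larr 0.
\]
The next step is to identify $\coker(\psi)$ with $H_2(\SL_2(\z[1/n']),\z)$ via $j_{1*}$, where $j_1\colon\SL_2(\z[1/n'])\harr\SL_2(\z[1/n])$ is the standard inclusion. For this I would use the surjectivity of $i_{1*}\colon H_2(\Gamma_0(n',p),\z)\to H_2(\SL_2(\z[1/n']),\z)$ proved in Section~\ref{sec5}, together with the relation $i_{2*}'=i_{1*}\circ\lambda$: indeed $i_2'$ is conjugation by $\pi_p=\mtxx{p}{0}{0}{1}$ followed by the inclusion of the conjugate subgroup $\pi_p\Gamma_0(n',p)\pi_p^{-1}$ into $\SL_2(\z[1/n'])$, and since that subgroup is carried onto $\Gamma_0(n',p)$ by the inner automorphism of $\SL_2(\z[1/n'])$ given by conjugation with $\mtxx{0}{-1}{1}{0}$, one obtains $i_{2*}'=i_{1*}\circ\lambda$ with $\lambda$ the automorphism of $H_2(\Gamma_0(n',p),\z)$ induced by conjugation with the normalising matrix $\mtxx{0}{-1}{p}{0}$. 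A diagram chase then gives $\coker(\psi)\simeq H_2(\SL_2(\z[1/n']),\z)/i_{1*}\big(\lambda(\ker i_{1*})\big)$, so everything comes down to showing that $\ker i_{1*}$ is $\lambda$‑invariant.

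Granting this, one is left with the sequence $0\to H_2(\SL_2(\z[1/n']),\z)\xrightarrow{j_{1*}}H_2(\SL_2(\z[1/n]),\z)\to\z/(p-1)\to 0$, which must be shown to split, and one must settle the base case. The splitting I would get either by exhibiting a section of $\partial$ on a subgroup of $\SL_2(\z[1/n])$ tied to $\B(\F_p)$, or by a direct computation of the extension class; made compatibly along the induction, this presents $\bigoplus_{p\mid(n/m)}\F_p^\times$ as a direct summand and yields the general split exact sequence. For the base case $n=6$ one performs one further Mayer--Vietoris step, for instance from $\SL_2(\z[1/3])$ — where now $H_1(\SL_2(\z[1/3]),\z)=\z/4\neq0$ and one invokes Theorem~\ref{G0}(iii) for $\Gamma_0(3,2)$ together with $H_2(\SL_2(\z[1/3]),\z)\simeq\z$ from Theorem~\ref{AN} — obtaining $H_2(\SL_2(\z[1/6]),\z)\simeq\z\oplus\z/2$ (a value also recorded in \cite{ae2014}), and substituting back through the induction gives $H_2(\SL_2(\z[1/n]),\z)\simeq\z\oplus\bigoplus_{p\mid n}\z/(p-1)$. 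The principal obstacle is the computation of $\coker(\psi)$: it requires a workable description of $H_2(\Gamma_0(n',p),\z)$ — most naturally through the Lyndon--Hochschild--Serre spectral sequence of $1\to\Gamma(n',p)\to\Gamma_0(n',p)\to\B(\F_p)\to 1$ — and enough control over the normalising automorphism $\lambda$ acting on it to see that $\ker i_{1*}$ is stable; arranging the splitting section coherently across the induction, so that the successive cyclic quotients assemble into $\bigoplus_{p\mid(n/m)}\F_p^\times$ rather than into a mere iterated extension, is the remaining delicate point.
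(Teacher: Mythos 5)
The first thing to say is that the paper does not prove this theorem at all: it is Hutchinson's result, and the paper simply cites \cite[Theorems 6.10 and 6.12]{h2016}. So your proposal is necessarily an independent argument. Your Mayer--Vietoris setup is correct as far as it goes: the computation $d=\gcd\{k^2-1:k\mid n'\}=1$, the identification $H_1(\Gamma_0(n',p),\z)\simeq\z/(p-1)$ via Theorem \ref{G0}(i), the collapse of the tail, the relation $i_{2*}'=i_{1*}\circ\lambda$ with $\lambda$ induced by ${\mtxx 0 {-1} p 0}$, and the resulting short exact sequence $0\arr\coker(\psi)\arr H_2(\SL_2(\z[1/n]),\z)\arr\z/(p-1)\arr 0$ are all sound. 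But this only reproduces what the paper itself proves by these methods, namely the right-exact sequence of Proposition \ref{exact1}(i) and Theorem \ref{exact2}.

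The two steps you defer are precisely the entire content separating that right-exact sequence from Hutchinson's theorem, and neither is supplied. First, injectivity of $j_{1*}$, which you correctly reduce to $\lambda$-stability of $\ker i_{1*}$: the matrix ${\mtxx 0 {-1} p 0}$ normalises $\Gamma_0(n',p)$ but is not inner in $\SL_2(\z[1/n'])$, so there is no formal reason for $\ker i_{1*}$ to be stable under $\lambda$, and controlling this action on $H_2(\Gamma_0(n',p),\z)$ is essentially the hard part of any proof; the Lyndon--Hochschild--Serre description you gesture at does not obviously yield it. Note also that the only injectivity statement available in the paper, Proposition \ref{rq=1-inj}, is itself proved \emph{using} Theorem \ref{H26} together with the Bui--Ellis computation, so it cannot be invoked here without circularity. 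Second, the splitting: this is genuinely arithmetic-dependent --- the paper's own Theorem \ref{rp=1}(iii) and (v) show that the exactly analogous extension \emph{fails} to split when the distinguished prime is $5$ or $13$ and some $q\equiv 3\pmod 4$ divides $n$ --- so ``a section of $\partial$ tied to $\B(\F_p)$'' or ``a direct computation of the extension class'' cannot be left as an alternative to be chosen later; one would need, for instance, the tame-symbol maps $\delta_p$ assembled into an actual retraction onto $\bigoplus_{p\mid(n/m)}\F_p^\times$, and that is not done. (The base case $n=6$ suffers from the same two issues in its own Mayer--Vietoris step and is in effect imported from \cite{ae2014}.) As it stands, the proposal establishes only the already-available right-exact sequence; the theorem itself remains unproved.
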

\begin{proof}
See \cite[Theorem 6.10 and Theorem 6.12]{h2016}.  
\end{proof}

For the map $H_2(\SL_2(\z[1/n]),\z) \arr \bigoplus_{p\mid (n/m)} \F_p^\times$, appearing in 
Theorem \ref{H26}, we refer the reader to Section~\ref{sec6}.
 
This article presents a generalization of the results of Hutchinson, Adem-Naffah, and Bui-Ellis, 
determining the group structure of $H_2(\SL_2(\z[1/n]),\z)$ for any $n\in \N$ that has a 
prime factor within the set $\{2, 3, 5, 7, 13\}$.

\section{Mayer-Vietoris exact sequence}\label{sec3}

Let $n$ be an integer and let $p$ be a prime such that $p\nmid n$.
By the Mayer-Vietoris exact sequence \cite[Corollary 7.7, \S7, Chap. II]{brown1994} applied to the amalgamated 
isomorphism of Theorem \ref{iso-amal}, we have the exact sequence
\begin{align*}
\begin{array}{c}
H_2(\Gamma_0(n,p),\z) \overset{\alpha_2}{\larr} H_2(\SL_2(\z[1/n]),\z) 
\oplus H_2(\SL_2(\z[1/n]),\z) 
\overset{\beta_2}{\larr}  H_2(\SL_2(\z[1/pn]),\z)\\
\\
\larr H_1(\Gamma_0(n,p),\z) \overset{\alpha_1}{\larr} H_1(\SL_2(\z[1/n]),\z)
\oplus H_1(\SL_2(\z[1/n]),\z)\\
\\
\overset{\beta_1}{\larr} H_1(\SL_2(\z[1/pn]),\z) \arr 0.
\end{array}
\end{align*}
Here, $\alpha_k(x)=({i_1}_\ast(x), {i_2'}_\ast(x))$ and $\beta_k(y, z)={j_2'}_\ast(z)-{j_1}_\ast(y)$. 
Based on this and Theorem~\ref{H1}, $\beta_1$ is given by 
\[
(\overline{a}, \overline{b}) \mapsto \overline{pb-a}.
\]

Moreover, the Lyndon/Hochschild-Serre spectral sequence of the morphism of extensions~(\ref{ext}) 
gives us the commutative diagram with exact rows
\[
\begin{tikzcd} 
H_2(\B(\F_p),\!\z)\! \ar[r]\ar[d]&\!\! H_1(\Gamma(n,p),\z)_{\B(\F_p)} \!\ar[r]\ar[d]& 
\!\! H_1(\Gamma_0(n,p),\z) \!\ar[r, two heads] \ar[d]& \!\! H_1(\B(\F_p),\!\z) \ar[d]\\
H_2(\SL_2(\F_p),\!\z) \!\ar[r]& \!\! H_1(\Gamma(n,p),\z)_{\SL_2(\F_p)} \!\ar[r]& 
\!\! H_1(\SL_2(\z[1/n]),\z) \!\ar[r, two heads]& \!\! H_1(\SL_2(\F_p),\!\z)
\end{tikzcd}
\]
(see \cite[Corollary 6.4, Chap. VII]{brown1994}). It is known that $H_2(\SL_2(\F_p),\z)=0$ 
\cite[Theorem 3.9]{beyl1986}. Moreover, by Proposition~\ref{exa-2}, we have
\[
H_1(\SL_2(\F_p),\z)\simeq (\F_p)_{\F_p^\times}=
\begin{cases}
\F_2 &  \text{if $p=2$}  \\
\F_3 &  \text{if $p=3$}  \\
0 &  \text{if $p > 3$}  \\
\end{cases},
\]
where $\F_p^\times$ acts on $\F_p$ by the formula $a.x:=a^2x$.
The homology groups of $\B(\F_p)$ can be obtained by studying the Lyndon/Hochschild-Serre 
spectral sequence of the split extension
\[
0 \arr \F_p \overset{i}{\arr} \B(\F_p) \overset{pr}{\arr} \F_p^\times \arr 1,
\]
where $i(x)={\mtxx 1 x 0 1}$, ${pr}\Big({\mtxx a b 0 {a^{-1}}}\Big)=a$ and a section
$s: \F_p^\times \arr \B(\F_p)$ can be given by  $a \mapsto {\mtxx a 0 0 {a^{-1}}}$. From this, 
we obtain the isomorphisms $H_1(\B(\F_p),\z) \simeq \F_p^\times \oplus (\F_p)_{\F_p^\times}$ and
$H_2(\B(\F_p),\z)\simeq H_1(\F_p^\times, \F_p)$. Hence,
\[
H_1(\B(\F_p),\z)\simeq
\begin{cases}
\F_2 &  \text{if $p=2$}  \\
\F_3^\times \oplus \F_3 &  \text{if $p=3$}  \\
\F_p^\times &  \text{if $p > 3$}  \\
\end{cases},
\ \ \ \ \ \ \ H_2(\B(\F_p),\z)=0
\]
(for the calculation of the second homology we used the calculation of the homology of finite cyclic groups 
given in \cite[pp. 58-59]{brown1994}). Thus, we have the commutative diagram with exact rows
\begin{equation}\label{diagram}
\begin{tikzcd} 
0\!\! \ar[r]&\!\! H_1(\Gamma(n,p),\!\z)_{\B(\F_p)}\!\! \ar[r]\ar[d]& 
\!\! H_1(\Gamma_0(n,p),\!\z) \!\! \ar[r] \ar[d]& \!\! H_1(\B(\F_p),\!\z)\ar[r] \ar[d]&\!\! 0\\
0\!\! \ar[r]& \!\! H_1(\Gamma(n,p),\!\z)_{\SL_2(\F_p)} \!\!\ar[r]& 
\!\! H_1(\SL_2(\z[1/n]),\!\z) \!\! \ar[r]& \!\! H_1(\SL_2(\F_p),\!\z)\ar[r] & \!\! 0.
\end{tikzcd}
\end{equation}

We break the study of the Mayer-Vietoris exact sequence and the above diagram into three cases.\\
\bigskip
{\bf Case (i) $p>3$.} Since $p\nmid n$, by Theorem \ref{H1},  
$H_1(\SL_2(\z[1/n]),\z) \!\simeq \! H_1(\SL_2(\z[1/pn]),\z)$.
Thus, we have the Mayer-Vietoris exact sequence
\begin{align*}
\begin{array}{c}
H_2(\Gamma_0(n, p),\z) \arr H_2(\SL_2(\z[1/n]),\z) \oplus H_2(\SL_2(\z[1/n]),\z) 
\arr  H_2(\SL_2(\z[1/pn]),\z)\\
\\
\arr H_1(\Gamma_0(n, p),\z) \arr H_1(\SL_2(\z[1/n]),\z) \arr 0.
\end{array}
\end{align*}
Since $p>3$, $H_1(\B(\F_p),\z)\simeq\F_p^\times$ and $H_1(\SL_2(\F_p),\z)=0$.
Hence, the diagram (\ref{diagram}) finds the following form:
\begin{equation}\label{diag1}
\begin{tikzcd} 
0 \ar[r]& H_1(\Gamma(n, p),\z)_{\B(\F_p)} \ar[r]\ar[d, two heads]& 
H_1(\Gamma_0(n, p),\z) \ar[r] \ar[d, two heads, "{i_1}_\ast"]& \F_p^\times\ar[r] & 1\\
& H_1(\Gamma(n, p),\z)_{\SL_2(\F_p)} \ar[r, "\simeq"]& 
H_1(\SL_2(\z[1/n]),\z). &   & 
\end{tikzcd}
\end{equation}
{\bf Case (ii)} $p=3$. This case has two parts:
\par {\bf (ii-a)} $2\mid n$. In this case, by Theorem \ref{H1}, we have 
$H_1(\SL_2(\z[1/n]),\z)\simeq \z/3$ and $H_1(\SL_2(\z[{1}/{3n}]),\z)=0$. Thus,
we have the Mayer-Vietoris exact sequence
\begin{align*}
\begin{array}{c}
H_2(\Gamma_0(n,3),\z) \arr H_2(\SL_2(\z[1/n]),\z) \oplus H_2(\SL_2(\z[1/n]),\z) 
\arr  H_2(\SL_2(\z[{1}/{3n}]),\z)\\
\\
\arr H_1(\Gamma_0(n,3),\z) \arr \z/3 \oplus \z/3 \arr 0.
\end{array}
\end{align*}
Since $H_1(\B(\F_3),\z)\simeq\F_3^\times \oplus \F_3$, $H_1(\SL_2(\F_3),\z)\simeq \F_3$ and 
the map 
\[
H_1(\SL_2(\z[1/n]),\z) \arr H_1(\SL_2(\F_3),\z)
\]
is an isomorphism, we have 
$H_1(\Gamma(n,3),\z)_{\SL_2(\F_3)}=0$. Hence, the diagram (\ref{diagram}) becomes
\begin{equation}\label{diag--1}
\begin{tikzcd} 
0 \ar[r]& H_1(\Gamma(n,3),\z)_{\B(\F_3)} \ar[r]& 
H_1(\Gamma_0(n,3),\z) \ar[r] \ar[d, two heads, "{i_1}_\ast"]& \F_3^\times \oplus \F_3\ar[r]\ar[d] & 0\\
&  & 
H_1(\SL_2(\z[1/n]),\z)\ar[r, "\simeq"] & \F_3.  & 
\end{tikzcd}
\end{equation}
\par {\bf (ii-b)} $2\nmid n$. In this case, $H_1(\SL_2(\z[1/n]),\z) \simeq  \z/12$,  
$H_1(\SL_2(\z[1/3n]),\z)\simeq\z/4$ and, thus, we have the Mayer-Vietoris exact sequence
\begin{align*}
\begin{array}{c}
H_2(\Gamma_0(n,3),\z) \arr H_2(\SL_2(\z[1/n]),\z) \oplus H_2(\SL_2(\z[1/n]),\z) 
\arr  H_2(\SL_2(\z[1/3n]),\z)\\
\\
\arr H_1(\Gamma_0(n,3),\z) \arr \z/12 \oplus \z/12 \arr \z/4 \arr 0.
\end{array}
\end{align*}
Since $H_1(\SL_2(\z[1/n]),\z) \arr H_1(\SL_2(\F_3),\z)\simeq\z/3$ is surjective, we have  the isomorphism
$H_1(\Gamma(n,3),\z)_{\SL_2(\F_3)}\simeq \z/4$.
Hence, the diagram (\ref{diagram}) finds the following form:
\begin{equation}\label{diag-1}
\begin{tikzcd} 
0 \ar[r]& H_1(\Gamma(n,3),\z)_{\B(\F_3)} \ar[r]\ar[d, two heads]& 
H_1(\Gamma_0(n,3),\z) \ar[r] \ar[d]& \F_3^\times \oplus \F_3\ar[r]\ar[d] & 0\\
0 \ar[r]& \z/4 \ar[r]& 
H_1(\SL_2(\z[1/n]),\z)\ar[r] & \F_3 \ar[r]  & 0.
\end{tikzcd}
\end{equation}
{\bf Case (iii)} $p=2$. This case has two parts.
\par {\bf (iii-a)} $3\mid n$. In this case, $H_1(\SL_2(\z[1/n]),\z) \simeq \z/4$,  
$H_1(\SL_2(\z[1/2n]),\z)=0$ and, thus, we have the Mayer-Vietoris exact sequence
\begin{align*}
\begin{array}{c}
H_2(\Gamma_0(n,2),\z) \arr H_2(\SL_2(\z[1/n]),\z) \oplus H_2(\SL_2(\z[1/n]),\z) 
\arr  H_2(\SL_2(\z[1/2n]),\z)\\
\\
\arr H_1(\Gamma_0(n,2),\z) \arr \z/4 \oplus \z/4 \arr 0.
\end{array}
\end{align*}
Since $H_1(\B(\F_2),\z)\simeq\F_2$, $H_1(\SL_2(\F_2),\z)\simeq\F_2$ and 
$H_1(\SL_2(\z[1/n]),\z) \arr H_1(\SL_2(\F_2),\z)$ is surjective, we have  
$H_1(\Gamma(n,2),\z)_{\SL_2(\F_2)}\simeq\z/2$.
Hence, the diagram (\ref{diagram}) finds the following form:
\begin{equation}\label{diag--11}
\begin{tikzcd} 
0 \ar[r]& H_1(\Gamma(n,2),\z)_{\B(\F_2)} \ar[r]\ar[d]& 
H_1(\Gamma_0(n,2),\z) \ar[r] \ar[d]&\F_2\ar[r]\ar[d] & 0\\
0 \ar[r] & \z/2  \ar[r]& 
\z/4\ar[r] & \F_2 \ar[r] & 0.
\end{tikzcd}
\end{equation}
\par {\bf (iii-b)} $3\nmid n$. In this case,
$H_1(\SL_2(\z[1/n]),\z) \simeq \z/12$, $H_1(\SL_2(\z[1/2n]),\z)\simeq\z/3$
and, thus, we have the Mayer-Vietoris exact sequence
\begin{align*}
\begin{array}{c}
H_2(\Gamma_0(n,2),\z) \arr H_2(\SL_2(\z[1/n]),\z) \oplus H_2(\SL_2(\z[1/n]),\z) 
\arr  H_2(\SL_2(\z[1/2n]),\z)\\
\\
\arr H_1(\Gamma_0(n,2),\z) \arr \z/12 \oplus \z/12 \arr \z/3 \arr 0.
\end{array}
\end{align*}
Since $H_1(\SL_2(\z[1/n]),\z) \arr H_1(\SL_2(\F_2),\z)$ is surjective, we have  
$H_1(\Gamma(n,2),\z)_{\SL_2(\F_2)}\simeq \z/6$. Hence, the diagram (\ref{diagram}) is of the following form:
\begin{equation}\label{diag-11}
\begin{tikzcd} 
0 \ar[r]& H_1(\Gamma(n,2),\z)_{\B(\F_2)} \ar[r]\ar[d, two heads]& 
H_1(\Gamma_0(n,2),\z) \ar[r] \ar[d]& \F_2\ar[r]\ar[d] & 0\\
0 \ar[r]& \z/6 \ar[r]& \z/12\ar[r] & \F_2 \ar[r]  & 0.
\end{tikzcd}
\end{equation}

\section{The first homology of \texorpdfstring{$\Gamma_0(n,p)$}{Lg}}\label{sec4}

We denote the natural inclusion $\Gamma(n,p)\arr \SL_2(\z[1/n])$ by $i$.
\begin{thm}\label{p-group}
Let $n>1$ be a square-free integer and $p$ a prime such that $p\nmid n$. Then the kernel of the natural  map 
\[
\begin{array}{c}
i_\ast: H_1(\Gamma(n,p),\z) \arr H_1(\SL_2(\z[1/n]),\z)
\end{array}
\]
is a $p$-group.
\end{thm}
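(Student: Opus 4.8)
The plan is to exploit the congruence subgroup property together with the index computations already recorded, and show that the kernel of $i_\ast$ is killed by a power of $p$. First I would invoke Lemma~\ref{p3k}, which tells us $[\Gamma(n,p):\Gamma(n,p^k)]=p^{3(k-1)}$, so $\Gamma(n,p^k)$ is a normal subgroup of $\Gamma(n,p)$ of $p$-power index. The quotient $Q_k:=\Gamma(n,p)/\Gamma(n,p^k)$ is therefore a finite $p$-group. The key point is that the intersection $\bigcap_k \Gamma(n,p^k)$ is trivial: an element of every $\Gamma(n,p^k)$ must have off-diagonal entries divisible by $p^k$ for all $k$ and diagonal entries congruent to $1$ modulo $p^k$ for all $k$, which forces it to be the identity since entries live in $\z[1/n]$ and $p\nmid n$. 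Thus $\Gamma(n,p)$ is residually a $p$-group.

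Next I would turn the residual-$p$ statement into a statement about $H_1$. Let $K$ denote the kernel of $i_\ast\colon H_1(\Gamma(n,p),\z)\arr H_1(\SL_2(\z[1/n]),\z)$. Since $\Gamma(n,p)$ is finitely generated (as a finite-index subgroup of the arithmetic group $\SL_2(\z[1/n])$; see~(\ref{eq0})), $H_1(\Gamma(n,p),\z)=\Gamma(n,p)^{\ab}$ is a finitely generated abelian group, so its torsion part is finite and it suffices to bound the torsion primes of $K$ and to show $K$ has no free part. The free part is handled by a rank count: $i_\ast$ is, up to the transfer/corestriction, compatible with a finite-index inclusion, and rationally $H_1(\Gamma(n,p),\q)\arr H_1(\SL_2(\z[1/n]),\q)$ is injective because $\Gamma(n,p)$ has finite index (the composite with the transfer is multiplication by the index, hence injective on $\q$-homology). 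Therefore $K$ is finite. For the torsion primes: if $\ell\neq p$ is a prime and $x\in H_1(\Gamma(n,p),\z)$ is an $\ell$-torsion element lying in $K$, I would show $x=0$ by passing to the pro-$p$ completion. Concretely, the residual-$p$ property gives, for any $\ell$-torsion class $x$, a surjection $\Gamma(n,p)\two Q_k$ onto a finite $p$-group through which $x$ has nontrivial image only if $\ell=p$; more cleanly, the abelianization $\Gamma(n,p)^{\ab}$ surjects onto $Q_k^{\ab}$, a finite abelian $p$-group, and the inverse limit of these detects the $p$-part. One shows that the prime-to-$p$ torsion of $\Gamma(n,p)^{\ab}$ already injects into $H_1(\SL_2(\z[1/n]),\z)$, because the diagram~(\ref{diagram}) (and its variants~(\ref{diag1})--(\ref{diag-11})) identifies the cokernel of $i_\ast$ with a subquotient of $H_1(\B(\F_p),\z)$, whose only torsion is $p$-torsion — so prime-to-$p$ classes cannot die. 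Hence $K$ is a finite $p$-group, as claimed.

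The step I expect to be the main obstacle is making the prime-to-$p$ vanishing rigorous without circularity: one wants to conclude that a prime-to-$p$ torsion class in $H_1(\Gamma(n,p),\z)$ cannot lie in $\ker(i_\ast)$, and the clean way is to produce a retraction or a transfer-type splitting after inverting $p$. I would argue as follows: the extension~(\ref{ext}) expresses $\SL_2(\z[1/n])/\Gamma(n,p)\simeq\SL_2(\F_p)$ and $\Gamma_0(n,p)/\Gamma(n,p)\simeq\B(\F_p)$; working $\z[1/p]$-linearly, the Hochschild--Serre sequence for $\Gamma(n,p)\triangleleft\SL_2(\z[1/n])$ degenerates enough (since the relevant homology of the finite quotient is concentrated at $p$, by the cited fact $H_2(\SL_2(\F_p),\z)=0$ and the explicit $H_1$) to give that $H_1(\Gamma(n,p),\z[1/p])_{\SL_2(\F_p)}\arr H_1(\SL_2(\z[1/n]),\z[1/p])$ is injective; combined with the fact that coinvariants only affect the $p$-part after inverting the order-considerations, this forces any prime-to-$p$ torsion in the kernel to vanish. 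Assembling these pieces — finiteness of $K$, no prime-to-$p$ torsion — yields that $\ker(i_\ast)$ is a finite $p$-group.
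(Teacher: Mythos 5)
There are genuine gaps here, and the central one is that none of your tools can see the prime-to-$p$ torsion in the kernel. The residual-$p$ observation (via Lemma \ref{p3k}) is correct but unusable for this purpose: an $\ell$-torsion class with $\ell\neq p$ dies in \emph{every} finite $p$-quotient, so pro-$p$ completions detect only the $p$-part of $\Gamma(n,p)^{\ab}$ and say nothing about whether prime-to-$p$ classes lie in $\ker(i_\ast)$. Your decisive step --- that the cokernel of $i_\ast$ is a subquotient of $H_1(\B(\F_p),\z)$, ``whose only torsion is $p$-torsion, so prime-to-$p$ classes cannot die'' --- fails twice over: $H_1(\B(\F_p),\z)\simeq\F_p^\times\oplus(\F_p)_{\F_p^\times}$ contains $\F_p^\times$, of order $p-1$, so for $p>3$ it has \emph{no} $p$-torsion at all; and in any case a statement about the cokernel of a map carries no information about its kernel (the zero map $\z/\ell\arr 0$ has trivial cokernel). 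What the five-term sequence with $H_2(\SL_2(\F_p),\z)=0$ actually gives is that $H_1(\Gamma(n,p),\z)_{\SL_2(\F_p)}$ injects into $H_1(\SL_2(\z[1/n]),\z)$, so $\ker(i_\ast)$ is exactly the augmentation submodule $I_{\SL_2(\F_p)}\cdot H_1(\Gamma(n,p),\z)$; but $|\SL_2(\F_p)|=p(p^2-1)$ has a large prime-to-$p$ part, so there is no formal reason this submodule is a $p$-group after ``inverting the order-considerations'' --- that is precisely the content of the theorem, not a consequence of localizing at $p$.

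Your finiteness argument is also backwards: for a finite-index inclusion the composite $i_\ast\circ\mathrm{tr}$ is multiplication by the index on the homology of the \emph{big} group, which makes $i_\ast$ rationally \emph{surjective}, not injective; the other composite is the norm $\sum_g g_\ast$, so rational injectivity would require knowing that $\SL_2(\F_p)$ acts trivially on $H_1(\Gamma(n,p),\q)$. Finiteness of $H_1(\Gamma(n,p),\z)$ is true, but it needs the congruence subgroup property (Theorem \ref{cong}): $[\Gamma(n,p),\Gamma(n,p)]$ is a non-central normal subgroup of $\SL_2(\z[1/n])$, hence of finite index. This is where the paper's proof starts, and its real engine is different from anything you propose: CSP places a principal congruence subgroup $\Gamma(A,I)$ inside $[\Gamma(A,\ppp),\Gamma(A,\ppp)]$; writing $I=\ppp^kJ$ with $J$ coprime to $p$, Lemma \ref{coprime} gives a surjection $\Gamma(A,\ppp)\two\SL_2(A/J)$ showing that $|[\SL_2(A/J),\SL_2(A/J)]|$ accounts for the prime-to-$p$ part of the index $[[\Gamma(A,\ppp),\Gamma(A,\ppp)]:\Gamma(A,I)]$; comparing all the orders then yields an identity of the form $p^{3k-3+r}=m|\KK|p^\alpha$, which forces $|\KK|$ to be a power of $p$. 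You would need to replace your prime-to-$p$ step with a counting argument of this kind.
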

\begin{proof}
Let $A:=\z[1/n]$, $\ppp := \lan p\ran$, $\KK:=\ker(i_\ast)$ and $\GG:=\im(i_\ast)$. 
Observe that $\Gamma(A, \ppp)=\Gamma(n,p)$. By these notations, we have
\[
|\KK|=|H_1(\Gamma(A, \ppp),\z)|/|\GG|.
\]

Since  $[\Gamma(A, \ppp),\Gamma(A, \ppp)]$ is a noncentral normal subgroup of $\SL_2(A)$, by 
Theorem \ref{cong}, the group $[\Gamma(A, \ppp),\Gamma(A, \ppp)]$ contains a subgroup of the 
form $\Gamma(A, I)$,  for some nontrivial ideal $I$ of $A$. Note that $I\subseteq \ppp$. In 
fact, if $a \in I$, then ${\mtxx 1 a 0 1} \in \Gamma(A,I) \subseteq \Gamma(A, \ppp)$. It 
follows from this that $a \in \ppp$. Since $\Gamma(A, 2^23^2I)\se \Gamma(A, I)$, we may 
assume that $I=\lan 2^23^2y\ran$, for some $y\in A$.

Let $I = \lan p_1^{r_1} \cdots p_m^{r_m}\ran$, where $p_1=p$ and $p_2 < p_3 <\cdots < p_m$ are primes, and 
so irreducible, elements of $A$. Let $k:= r_1$ and $J := \lan p_2^{r_2} \cdots p_m^{r_m}\ran$. Then $I=\ppp^k J$, 
where $k\geq 1$. Note that $\ppp^k$ and $J$ are coprime. Since $\Gamma(A, \ppp^{k+1}) \subseteq \Gamma(A, \ppp^{k})$, 
we may assume without loss that $k\geq 2$. 
Now, by Lemma \ref{coprime},
the map
\[
\Gamma(A, \ppp) \arr \SL_2(A/J)
\]
is surjective. 
From this, we obtain the surjective map 
\[
[\Gamma(A, \ppp), \Gamma(A, \ppp)] \arr [\SL_2(A/J), \SL_2(A/J)].
\]
Moreover, the inclusion $I\subseteq J$ gives us the inclusion $\Gamma(A, I) \subseteq \Gamma(A,J)$. From this, we obtain the 
surjective map
\[
[\Gamma(A, \ppp), \Gamma(A, \ppp)]/\Gamma(A,I) \arr [\SL_2(A/J), \SL_2(A/J)]
\]
and thus
\begin{equation}\label{eq1}
|[\SL_2(A/J), \SL_2(A/J)]|\ \Big|\ [[\Gamma(A, \ppp), \Gamma(A, \ppp)]:\Gamma(A,I)].
\end{equation}
From the isomorphism 
\[
H_1(\SL_2(A/J),\z)\simeq \displaystyle\frac{\SL_2(A/J)}{[\SL_2(A/J), \SL_2(A/J)]},
\]
we have
\[
|[\SL_2(A/J), \SL_2(A/J)]|=|\SL_2(A/J)|/|H_1(\SL_2(A/J),\z)|.
\] 
It follows from this and (\ref{eq1}) that
\[
|\SL_2(A/J)|/|H_1(\SL_2(A/J),\z)|\ \Big|\ [[\Gamma(A, \ppp), \Gamma(A, \ppp)]:\Gamma(A,I)].
\]
Let $m \in \N$ such that
\[
[[\Gamma(A, \ppp), \Gamma(A, \ppp)]:\Gamma(A,I)]=\frac{m |\SL_2(A/J)|}{|H_1(\SL_2(A/J),\z)|}.
\]
Since $\ppp^k$ and $J$ are coprime ideals of $A$ and $I=\ppp^kJ$, we have 
$A/I \simeq A/\ppp^k \times A/J$ and, thus, 
\begin{equation}\label{eq2}
\SL_2(A/I)\simeq \SL_2(A/\ppp^k) \times \SL_2(A/J).
\end{equation}
By Theorem \ref{cong}, the index 
$[\SL_2(A): [\Gamma(A, \ppp),\Gamma(A, \ppp)]]$ is finite and so we have
\begin{align*}
[\SL_2(A): [\Gamma(A, \ppp),\Gamma(A, \ppp)]]& =\frac{[\SL_2(A): \Gamma(A,I)]}{[[\Gamma(A, \ppp),\Gamma(A, \ppp)]: \Gamma(A,I)]}\\
&=\frac{|\SL_2(A/I)|}{[[\Gamma(A, \ppp),\Gamma(A, \ppp)]: \Gamma(A,I)]}\\
&=\frac{|\SL_2(A/\ppp^k)||\SL_2(A/J)|}{[[\Gamma(A, \ppp),\Gamma(A, \ppp)]: \Gamma(A,I)]}\\
&= \frac{|H_1(\SL_2(A/J),\z)||\SL_2(A/\ppp^k)|}{m}.
\end{align*}
By (\ref{eq2}), $H_1(\SL_2(A/I), \z)\simeq H_1(\SL_2(A/J), \z) \oplus H_1(\SL_2(A/\ppp^k), \z)$. Hence,
\begin{align*}
\frac{|H_1(\SL_2(A/I),\z)||\SL_2(A/\ppp^k)|}{|H_1(\SL_2(A/\ppp^k),\z)|}&=m[\SL_2(A): 
[\Gamma(A, \ppp),\Gamma(A, \ppp)]]\\
&=m[\SL_2(A): \Gamma(A, \ppp)][\Gamma(A, \ppp): [\Gamma(A, \ppp),\Gamma(A, \ppp)]]\\
&\overset{(\ref{eq0})}{=}mp(p^2-1)|H_1(\Gamma(A, \ppp),\z)|\\
&=mp(p^2-1)|\KK||\GG|.
\end{align*}
Now we prove that 
\begin{equation}\label{eq3}
H_1(\SL_2(A/I),\z)\simeq H_1(\SL_2(A),\z).
\end{equation}
Since 
\[
\Gamma(A,I)\se [\Gamma(A, \ppp),\Gamma(A, \ppp)] \se [\SL_2(A), \SL_2(A)],
\]
we have the surjective map
\[
\phi:\SL_2(A/I)\simeq \SL_2(A)/\Gamma(A,I)  \two    \SL_2(A)/[\SL_2(A), \SL_2(A)]=H_1(\SL_2(A),\z).
\]
This gives us the surjective map
\[
\phi_\ast: H_1(\SL_2(A/I),\z) \arr H_1(\SL_2(A),\z).
\]
On the other hand, we have the natural surjective map
\[
\pi_\ast: H_1(\SL_2(A),\z) \two H_1(\SL_2(A/I),\z).
\]
It is straightforward to check that the maps $\phi_\ast$ and $\pi_\ast$ are inverse of each other.
This proves (\ref{eq3}). 

Let $l=|H_1(\SL_2(A),\z)|$. By Theorem \ref{H1} we have
\[
H_1(\SL_2(A/\ppp^k),\z)\simeq \begin{cases}
0 & \text{if $p>3$}\\
\z/3 & \text{if $p=3$.}\\
\z/4 & \text{if $p=2$}
\end{cases}
\]
Hence, $|H_1(\SL_2(A/\ppp^k),\z)|=p^\alpha$, for some $\alpha \in \{0, 1, 2\}$. Now from the above we have 
\[
l|\SL_2(A/\ppp^k)|=mp(p^2-1)|\KK||\GG||H_1(\SL_2(A/\ppp^k),\z)|=mp(p^2-1)|\KK||\GG|p^\alpha.
\]
Using diagrams (\ref{diag1}), (\ref{diag--1}),  (\ref{diag-1}),  (\ref{diag--11}),  (\ref{diag-11}) and 
Theorem~\ref{H1} and Proposition~\ref{exa-2}, we easily get
\[
|\GG|=\begin{cases}
l & \text{if $p>3$}\\
l/3 & \text{if $p=3$.}\\
l/2 & \text{if $p=2$}
\end{cases}
\]
Thus, $l=p^r|\GG|$, where $r\in \{0,1\}$.
Now, since $|\SL_2(A/\ppp^k)|=(p^2-1)p^{3k-2}$ (see \cite[p. 248]{h2016}), we have 
\[
l(p^2-1)p^{3k-2}=mp(p^2-1)|\KK||\GG|p^\alpha
\]
and hence
\[
p^{3k-3+r}=m|\KK|p^\alpha.
\]
It follows from this that $|\KK|$ is a power of $p$.
This completes the proof of the theorem.
\end{proof}

The next theorem is Theorem C of the introduction.

\begin{thm}\label{G0}
Let $n>1$ be a square-free natural number, $p$ a prime such that $p\nmid n$ and $d:= \gcd\{m^2-1: m\mid n\}$. 
\par {\rm (i)} If $p>3$ and $p\nmid d$, then
\[
\begin{array}{c}
H_1(\Gamma_0(n,p),\z)\simeq  H_1(\SL_2(\z[1/n]),\z) \oplus \F_p^\times.
\end{array}
\]
\par {\rm (ii)} If $p=3$ and $d=3t$, where $3\nmid t$ (e.g. when $2\mid n$ or $5\mid n$), then 
\[
\begin{array}{c}
H_1(\Gamma_0(n, 3),\z)\simeq  H_1(\SL_2(\z[1/n]),\z) \oplus \F_3^\times \oplus \z/3.
\end{array}
\]
\par {\rm (iii)} If $p=2$ and $d=8t$, where $2\nmid t$ (e.g. when $3\mid n$ or $5\mid n$), then 
\[
\begin{array}{c}
H_1(\Gamma_0(n,2),\z)\simeq  H_1(\SL_2(\z[1/n]),\z) \oplus
\F_2 \oplus \z/4.
\end{array}
\]
\end{thm}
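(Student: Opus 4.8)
The plan is to analyze the diagrams (\ref{diag1}), (\ref{diag--1}), and (\ref{diag--11})--(\ref{diag-11}) from Section~\ref{sec3} together with the structural information provided by Theorem~\ref{p-group}. The key point is that in each of these diagrams the bottom-right horizontal arrow computes $H_1(\SL_2(\z[1/n]),\z)$ from the coinvariants $H_1(\Gamma(n,p),\z)_{\SL_2(\F_p)}$ and $H_1(\SL_2(\F_p),\z)$, while the top-right arrow computes $H_1(\Gamma_0(n,p),\z)$ from $H_1(\Gamma(n,p),\z)_{\B(\F_p)}$ and $H_1(\B(\F_p),\z)$. So the first step is to pin down the left-hand vertical map $H_1(\Gamma(n,p),\z)_{\B(\F_p)} \arr H_1(\Gamma(n,p),\z)_{\SL_2(\F_p)}$, and in particular to show it is an \emph{isomorphism} (not merely a surjection) under the hypotheses on $d$.

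To do this I would first use Theorem~\ref{p-group}: the kernel $\KK$ of $i_\ast\colon H_1(\Gamma(n,p),\z) \arr H_1(\SL_2(\z[1/n]),\z)$ is a $p$-group. Since $i_\ast$ factors through the $\SL_2(\F_p)$-coinvariants, this controls how far the coinvariants can differ from $H_1(\SL_2(\z[1/n]),\z)$ itself. The hypothesis on $d = \gcd\{m^2-1 : m\mid n\}$ enters exactly here: for $m\mid n$ the matrix $\diag(m, m^{-1})$ lies in $\SL_2(\z[1/n])$ and conjugation by it acts on the ``lower-left entry'' part of $\Gamma(n,p)$ by multiplication by $m^2$, so the coinvariants kill everything of order dividing $m^2-1$; the condition $p\nmid d$ (resp. $d=3t$ with $3\nmid t$, $d=8t$ with $2\nmid t$) is precisely what is needed to say that the relevant $p$-primary part of the ``extra'' factor is or is not annihilated. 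Combining this with the computed orders $|\GG|$ from the proof of Theorem~\ref{p-group} (where $|\GG|=l$, $l/3$, or $l/2$ according as $p>3$, $p=3$, $p=2$) and the right-hand columns $H_1(\B(\F_p),\z)$ versus $H_1(\SL_2(\F_p),\z)$ from Section~\ref{sec3}, a diagram chase (five lemma / snake lemma on the ladders) identifies $H_1(\Gamma_0(n,p),\z)$ as an extension of $H_1(\B(\F_p),\z)$ by $H_1(\Gamma(n,p),\z)_{\B(\F_p)}\cong H_1(\SL_2(\z[1/n]),\z)$ (in case (i)) or by the appropriate subgroup (in cases (ii), (iii)).

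For the splitting, I would exhibit a compatible splitting of the top row of each diagram. In case (i) the section comes from the torus: the map $\F_p^\times \arr H_1(\B(\F_p),\z)$ splits via $a\mapsto \overline{\diag(a,a^{-1})}$ pulled back to a diagonal element of $\Gamma_0(n,p)$ (lifting through $\Gamma_0(n,p)\two \B(\F_p)$), giving a retraction of $H_1(\Gamma_0(n,p),\z)$ onto $\F_p^\times$; one then checks the complementary summand is $i_\ast$-isomorphic to $H_1(\SL_2(\z[1/n]),\z)$. In cases (ii) and (iii) there is the additional $\F_p$-summand of $H_1(\B(\F_p),\z)$ coming from the unipotent radical, contributing a further $\z/3$ or $\z/4$; its splitting follows from the splitting of $0\arr \F_p \arr \B(\F_p)\arr \F_p^\times\arr 1$ and the fact that the image of $\F_p$ in $H_1(\Gamma_0(n,p),\z)$ has the stated order (here again the hypothesis on $d$ is what forces this image to survive rather than be absorbed into $H_1(\SL_2(\z[1/n]),\z)$).

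The main obstacle I anticipate is the bookkeeping around the left-hand vertical map in cases (ii) and (iii): one must show not only that $H_1(\Gamma(n,p),\z)_{\B(\F_p)}\arr H_1(\Gamma(n,p),\z)_{\SL_2(\F_p)}$ is surjective (which the diagrams already record) but that its kernel is exactly accounted for by the $\z/3$ (resp. $\z/4$) that reappears on the $H_1(\B(\F_p),\z)$ side, so that the total order matches and no hidden extension problem remains. Resolving this cleanly will require carefully combining the $p$-group bound from Theorem~\ref{p-group}, the explicit index $|H_1(\SL_2(A/\ppp^k),\z)|=p^\alpha$, and the action of the diagonal torus encoded in $d$; everything else is a direct, if somewhat intricate, diagram chase.
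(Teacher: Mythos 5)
Your overall architecture (the diagrams of Section~\ref{sec3}, Theorem~\ref{p-group}, and a torus action that brings $d$ into play) points in the right direction, but two essential ingredients are missing and one of your mechanisms fails. The splitting you propose for the $\F_p^\times$-factor does not work as stated: there is in general no homomorphic lift of $\F_p^\times$ into $\Gamma_0(n,p)$ by diagonal matrices, because the diagonal elements of $\Gamma_0(n,p)$ are the $\diag(u,u^{-1})$ with $u\in\z[1/n]^\times$, and the reduction $\z[1/n]^\times\arr\F_p^\times$ is usually far from surjective. The paper instead exploits the surjective homomorphism $\Gamma_0(n,p)\arr\F_p^\times$, ${\mtxx a b c d}\mapsto\overline a$, whose kernel $\Gamma_1(n,p)=\widetilde\Gamma(\z[1/n],\lan p\ran)$ replaces $\Gamma(n,p)$ as the object to be controlled; the splitting in case (i) is then obtained from a retraction (the left vertical isomorphism in the resulting ladder composed with $i_{1\ast}$), not from a section of the torus.

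More seriously, your bound ``the coinvariants kill everything of order dividing $m^2-1$'' is applied to $H_1(\Gamma(n,p),\z)$, but $\Gamma(n,p)$ is not known to be generated by elementary matrices (Theorem~\ref{VL} gives elementary generation only for groups of the form $\widetilde\Gamma(I_1,I_2)$, and $\Gamma(\z[1/n],\lan p\ran)$ is not one of these), so the conjugation action of $\diag(m,m^{-1})$ on the ``lower-left entry part'' controls nothing about the whole coinvariant group. This is exactly why the paper passes to $\Gamma_1(n,p)$: by Theorem~\ref{VL} it is generated by $E_{12}(x)$ and $E_{21}(py)$, which yields the surjection $\z/d\times\z/d\two\Gamma_1(n,p)/[\Gamma_1(n,p),\Gamma_0(n,p)]$ and hence the divisibility $l\cdot|\LL|\mid d^2$ on which every case rests. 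Finally, in cases (ii) and (iii) the upper bound coming from $d$ must be complemented by lower bounds showing that the extra $\z/3$ (resp.\ $\z/2\oplus\z/4$) actually survives; the paper does this with explicit elements --- $\overline{E_{21}(l)}$ detected through the second embedding $i_2'$ into the amalgam, and for $p=2$ the relation $-I_2=E_{21}(-2)E_{12}(1)E_{21}(-2)E_{12}(1)$ --- none of which appear in your sketch. These detection arguments are not routine bookkeeping; without them the isomorphism types in (ii) and (iii) are not determined.
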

\begin{proof}
Let $A:=\z[1/n]$. The map $\Gamma_0(n,p) \arr \F_p^\times$, given by ${\mtxx a b c d} \mapsto \overline{a}$,
is a surjective homomorphism of groups. We denote its kernel by $\Gamma_1(n,p)$. Thus, we have the group extension
\[
1 \arr \Gamma_1(n,p)\arr \Gamma_0(n,p) \arr \F_p^\times \arr 1,
\]
where
\[
\begin{array}{c}
\Gamma_1(n,p):=\bigg\{{\mtxx a b c d} \in\SL_2(A): p\mid a-1, d-1, c  \bigg\}.
\end{array}
\]
Note that $\Gamma_1(n,p)=\widetilde{\Gamma}(A, \ppp)$, where $\ppp=\lan p\ran$, and
\[
\begin{array}{c}
\Gamma(n,p) \subseteq \Gamma_1(n,p) \subseteq\Gamma_0(n,p) \subseteq \SL_2(A).
\end{array}
\]
By Theorem \ref{VL} (for $I_1=A$ and $I_2=\ppp)$, $\Gamma_1(n,p)$ is generated by 
the matrices $E_{12}(x)$ and $E_{21}(py)$, with $x, y\in A$. From the morphism of extensions 
\[
\begin{tikzcd}
1 \ar[r] & \Gamma(n, p) \ar[r]\ar[d, hookrightarrow]& \Gamma_0(n,p) \ar[r]\ar[d, equal]& \B(\F_p) \ar[r] \ar[d, "p"]& 1\\
1 \ar[r] & \Gamma_1(n, p) \ar[r]& \Gamma_0(n,p) \ar[r]& \F_p^\times \ar[r]& 1,
\end{tikzcd}
\]
we obtain the commutative diagram with exact rows 
\[
\begin{tikzcd} 
0 \ar[r]& H_1(\Gamma(n,p),\z)_{\B(\F_p)} \ar[r]\ar[d]& H_1(\Gamma_0(n,p),\z) \ar[r] \ar[d, equal]& 
\F_p^\times\oplus (\F_p)_{\F_p^\times}\ar[r] \ar[d, "{p}_\ast"]& 1\\
0 \ar[r]& H_1(\Gamma_1(n,p),\z)_{\F_p^\times}\ar[r]& H_1(\Gamma_0(n,p),\z) \ar[r]& \F_p^\times\ar[r]& 1.
\end{tikzcd}
\]
Since 
\[
H_1(\Gamma(n,p),\z)_{\B(\F_p)}\simeq\Gamma(n,p)/[\Gamma(n,p), \Gamma_0(n,p)]
\]
and
\[
H_1(\Gamma_1(n,p),\z)_{\F_p^\times}\simeq\Gamma_1(n,p)/[\Gamma_1(n,p), \Gamma_0(n,p)],
\]
from the Snake lemma applied to the above diagram,  we obtain the exact sequence
\[
1 \arr \frac{\Gamma(n,p)}{[\Gamma(n,p),\Gamma_0(n,p)]} \arr \frac{\Gamma_1(n,p)}{[\Gamma_1(n,p), \Gamma_0(n,p)]} 
\arr (\F_p)_{\F_p^\times} \arr 0.
\]
Again, applying the Snake lemma to the following commutative diagram 
\[
\begin{tikzcd}
1  \ar[r] & \displaystyle\frac{\Gamma(n,p)}{[\Gamma(n,p),\Gamma_0(n,p)]} \ar[r]\ar[d] & 
\displaystyle\frac{\Gamma_1(n,p)}{[\Gamma_1(n,p), \Gamma_0(n,p)]} \ar[r]\ar[d, two heads] & 
(\F_p)_{\F_p^\times} \ar[r]& 0\\
& H_1(\SL_2(A),\z) \ar[r, equal] & H_1(\SL_2(A),\z), & & 
\end{tikzcd}
\]
we obtain the exact sequence
\[
0 \arr \KK \arr \LL \arr (\F_p)_{\F_p^\times},
\]
where $\KK$ and $\LL$ are the kernels of the left and right vertical maps, respectively. 
By Theorem~\ref{p-group}, $\KK$ is a $p$-group. Thus, by the above exact sequence, 
$\LL$ is also a $p$-group. The natural map
\[
\begin{array}{c}
\tau: A \times A 
\end{array}
\arr \frac{\Gamma_1(n,p)}{[\Gamma_1(n,p), \Gamma_0(n,p)]},
\]
defined by $(a,0) \mapsto \overline{E_{12}(a)}$ and  $(0, a) \mapsto \overline{E_{21}(pa)}$,
is a surjective map. Since $d=\gcd\{m^2-1: m\mid n\}$, there are $r_m \in A$ such that 
$d=\sum_{m\mid n}r_m(m^2-1)$. Thus
\begin{align*}
\tau(-d,0) &= \overline{E_{12}(-d)}=\overline{E_{12}\Bigg(\sum_{m\mid n}r_m(1-m^2)\Bigg)}
=\prod_{m\mid n}\overline{E_{12}(r_m(1-m^2))}\\
&=\prod_{m\mid n} \overline{[D(m^{-1}), E_{12}(r_m)]}=1,
\end{align*}
where $D(m):={\mtxx m 0 0 {1/m}} \in \Gamma_0(n,p)$. Similarly,
$\tau(0,-d) =\prod_{m\mid n} \overline{[D(m), E_{21}(pr_m)]}=1$.
Thus, we have the surjective map
\[
\begin{array}{c}
\overline{\tau}:A/\lan d \ran \times A/\lan d \ran \arr 
\displaystyle\frac{\Gamma_1(n,p)}{[\Gamma_1(n,p),\Gamma_0(n,p)]}.
\end{array}
\]
Since $A/\lan d \ran \simeq \z/d$, we have
$\displaystyle\Bigg|\frac{\Gamma_1(n,p)}{[\Gamma_1(n,p), \Gamma_0(n,p)]}\Bigg| \mid d^2$. Hence,
\begin{align}\label{divides}
l \cdot |\LL| \ \Big|\ d^2,
\end{align}
where $l=|H_1(\SL_2(A), \z)|$.

\par (i) Let $p>3$. Then, on the one hand, $\LL$ is a $p$-group and, on the other hand, $|\LL|$ 
divides $d^2$. Since $p\nmid d$, $\LL$ must be trivial. It follows from this that 
\[
\frac{\Gamma_1(n,p)}{[\Gamma_1(n,p),\Gamma_0(n,p)]}
\begin{array}{c}
\simeq H_1(\SL_2(A), \z).
\end{array}
\]
Now, from the commutative diagram with exact rows
\[
\begin{tikzcd}
1  \ar[r] & \displaystyle\frac{\Gamma_1(n,p)}{[\Gamma_1(n,p),\Gamma_0(n,p)]} \ar[r]\ar[d, "\simeq"] & 
H_1(\Gamma_0(n,p),\z) \ar[r]\ar[d, two heads] & \F_p^\times \ar[r]& 1\\
& H_1(\SL_2(A),\z) \ar[r, equal] & H_1(\SL_2(A),\z), & & 
\end{tikzcd}
\]
we see that the first row splits. This completes the proof of the first item. 

(ii) Let $p=3$. Since $3\nmid n$, 
\[
l=|H_1(\SL_2(A),\z)|=\begin{cases}
3 & \text{if $2 \mid n$}\\
12 & \text{if $2 \nmid n$}
\end{cases}=2^r3, \ \ \ \ \ r=0,2,
\]
(see Theorem \ref{H1}). Thus, by (\ref{divides}), we get
\[
2^r3|\LL|\ \Big|\ d^2=3^2t^2.
\]
Since $\LL$ is a $3$-group (Theorem~\ref{p-group}) and $3\nmid t$, we have $|\LL|\ \Big|\ 3$. 
Under the natural map $\displaystyle\frac{\Gamma_1(n,3)}{[\Gamma_1(n,3),\Gamma_0(n,3)]}
\overset{{i_1}_\ast}{-\!\!\!-\!\!\!\larr} H_1(\SL_2(A),\z)$, 
$\overline{E_{21}(l)}$ maps to zero. In fact, in $ H_1(\SL_2(A),\z)$, we have 
\[
\overline{E_{21}(l)}=\overline{wE_{21}(l)w^{-1}}=\overline{E_{12}(-l)}=1,
\]
where $w={\mtxx 0 1 {-1} 0}$ (see Theorem \ref{H1}). But, under the map 
\[
\frac{\Gamma_1(n,3)}{[\Gamma_1(n,3),\Gamma_0(n,3)]} 
\overset{({i_1}_\ast,{i_2'}_\ast)}{-\!\!\!-\!\!\!-\!\!\!-\!\!\!\larr}
H_1(\SL_2(A), \z) \oplus H_1(\SL_2(A), \z),
\]
we have
\[
({i_1}_\ast,{i_2'}_\ast)(\overline{E_{21}(l)}))=(\overline{E_{21}(l)},\overline{E_{21}(l/3)})
=(1,\overline{E_{21}(l/3)})=(1,\overline{E_{12}(l/3)})\neq 1.
\]
Thus $\overline{E_{21}(l)})$ is a nonzero element of $\displaystyle\frac{\Gamma_1(n,3)}{[\Gamma_1(n,3),\Gamma_0(n,3)]}$
that belongs to $\LL$. Thus $\LL$ is non-trivial and hence, $\LL\simeq \z/3$. Therefore, we have the exact sequence
\[
0 \arr \z/3 \arr \frac{\Gamma_1(n,3)}{[\Gamma_1(n,3),\Gamma_0(n,3)]} \arr H_1(\SL_2(A), \z)\arr 0.
\]
Using Theorem \ref{H1} it is straightforward to check that the sequence
\[
\displaystyle\frac{\Gamma_1(n,3)}{[\Gamma_1(n,3),\Gamma_0(n,3)]} 
\overset{({i_1}_\ast,{i_2'}_\ast)}{-\!\!\!-\!\!\!-\!\!\!-\!\!\!\larr}
\begin{array}{c}
H_1(\SL_2(A), \z) \oplus H_1(\SL_2(A), \z) \arr H_1(\SL_2(\z[1/3n]), \z) \arr 0
\end{array}
\]
is exact. Thus, the above exact sequence splits and hence,
\[
\frac{\Gamma_1(n,3)}{[\Gamma_1(n,3),\Gamma_0(n,3)]}\simeq  H_1(\SL_2(A), \z) \oplus \z/3.
\]
Therefore,
\[
\begin{array}{c}
H_1(\Gamma_0(n,3),\z)\simeq \F_3^\times \oplus H_1(\SL_2(A), \z)\oplus \z/3.
\end{array}
\]
This completes the proof of (ii).

(iii) Let $p=2$. Note that, in this case, $\Gamma_1(n,2)=\Gamma_0(n,2)$. Then
\[
\begin{array}{c}
H_1(\Gamma_0(n,2),\z)\simeq 
\end{array}
\frac{\Gamma_1(n,2)}{[\Gamma_1(n,2),\Gamma_0(n,2)]}.
\]
Since $2\nmid n$,
\[
l=|H_1(\SL_2(A),\z)|=\begin{cases}
4 & \text{if $3 \mid n$}\\
12 & \text{if $3 \nmid n$}
\end{cases}=2^23^r, \ \ \ \ \ r=0,1,
\]
(see Theorem \ref{H1}). Thus, by (\ref{divides}), we get
\[
2^23^r|\LL|\ \Big|\ d^2=(8t)^2.
\]
Since $\LL$ is a $2$-group and $2\nmid t$, $|\LL|\ \Big |\ 16$. In $\Gamma_0(n,2)$ we have
\[
-I_2=E_{21}(-2)E_{12}(1)E_{21}(-2) E_{12}(1).
\]
Hence, in $H_1(\Gamma_0(n,2),\z)=\displaystyle\frac{\Gamma_0(n,2)}{[\Gamma_0(n,2),\Gamma_0(n,2)]}$, 
we have
\[
1=\overline{I_2}=(\overline{-I_2})^2=\overline{E_{21}(-8)}\ \overline{E_{12}(4)}.
\]
This implies that under the map
\[
\z/8 \times \z/8 \arr \frac{\Gamma_0(n,2)}{[\Gamma_0(n,2),\Gamma_0(n,2)]}, 
\]
the element $(4,-4)$ maps to zero. It follows from this that $|\LL|\ \Big|\ 8$. Since we have the 
exact sequence
\[
\frac{\Gamma_0(n,2)}{[\Gamma_0(n,2),\Gamma_0(n,2)]}\arr H_1(\SL_2(A),\z)\oplus H_1(\SL_2(A),\z)\arr 
\begin{array}{c}
H_1(\SL_2(\z[1/2n]), \z) \arr 0,
\end{array}
\]
using Theorem \ref{H1}, we get the following estimate on the order of $H_1(\Gamma_0(n,2),\z)$:
\[
16 \leq | H_1(\Gamma_0(n,2),\z)| \leq 32.
\]
But the element $\overline{E_{21}(8)}=\overline{E_{12}(4)}$ of 
$\displaystyle\frac{\Gamma_0(n,2)}{[\Gamma_0(n,2),\Gamma_0(n,2)]}$ 
is of order $2$ and, under the above map, goes to zero. This is a
nontrivial element of $H_1(\Gamma_0(n,2),\z)$ and thus 
\[
|H_1(\Gamma_0(n,2),\z)|=32.
\]
Now, as in case of (ii), we can show that
\[
\begin{array}{c}
H_1(\Gamma_0(n,2),\z)\simeq H_1(\SL_2(A), \z)\oplus \F_2 \oplus \z/4.
\end{array}
\]
This completes the proof of (iii) and the proof of the theorem.
\end{proof}

\begin{rem}
We believe that the theorem holds even after removing the conditions placed on $d$ 
(the divisibility restrictions in each case). However, the current theorem, with those 
conditions, is sufficient to prove Theorem \ref{exact2}, which relates to the group 
structure of $H_2(\SL_2(\z[1/n]),\z)$.
\end{rem}

\section{The second homology of \texorpdfstring{$\Gamma_0(n,p)$}{Lg}}\label{sec5}

We now state the following lemma. Here, for a finite abelian group $N$, $N_{(p)}$ denotes the $p$-Sylow 
subgroup of $N$.

\begin{lem}\label{p-torsion}
For any $\SL_2(\F_p)$-module $M$ and any integer $m\geq 1$, we have the isomorphism
\[
H_m(\B(\F_p), M)_{(p)} \simeq H_m(\SL_2(\F_p), M)_{(p)}.
\]
\end{lem}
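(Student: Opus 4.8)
The plan is to exploit the fact that $\mathrm{B}(\mathbb{F}_p)$ is (up to $p'$-torsion) the right tool for computing the $p$-local homology of $\mathrm{SL}_2(\mathbb{F}_p)$, which is a standard transfer argument: $\mathrm{B}(\mathbb{F}_p)$ is a Borel subgroup, so its index $[\mathrm{SL}_2(\mathbb{F}_p):\mathrm{B}(\mathbb{F}_p)]=p+1$ is prime to $p$, and a Sylow $p$-subgroup $U$ of $\mathrm{SL}_2(\mathbb{F}_p)$ — the group of upper unitriangular matrices — is contained in $\mathrm{B}(\mathbb{F}_p)$. I would first record the classical consequence: for a finite group $G$, a subgroup $H$ containing a Sylow $p$-subgroup, and any $G$-module $M$, the restriction map $H_m(H,M)_{(p)}\to H_m(G,M)_{(p)}$ is surjective (composition with corestriction is multiplication by $[G:H]$, a $p$-adic unit) — see \cite[Chap.~III, Prop.~10.1 and §9]{brown1994}. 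So the map in question is automatically surjective; the real content is injectivity.

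For injectivity I would use the double-coset / Mackey description of the composition $\mathrm{cor}\circ\mathrm{res}\colon H_m(H,M)\to H_m(H,M)$, which equals $\sum_{HgH} (\text{conjugation-twisted transfer terms})$ indexed by the double cosets $H\backslash G/H$. With $H=\mathrm{B}(\mathbb{F}_p)$ and $G=\mathrm{SL}_2(\mathbb{F}_p)$, the Bruhat decomposition gives exactly two double cosets: the identity coset, contributing the identity, and the "big cell" $\mathrm{B}wB$ with $w=\mtxx{0}{1}{-1}{0}$. The key numerical point is that the big-cell contribution factors through $H_m$ of the group $\mathrm{B}\cap\, ^w\!\mathrm{B}$, which is the diagonal torus $T\cong\mathbb{F}_p^\times$, a group of order $p-1$ coprime to $p$; hence that term vanishes after localizing at $p$ for every $m\geq 1$. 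Therefore $\mathrm{cor}\circ\mathrm{res}$ is, up to a $p'$-unit, the identity on $H_m(\mathrm{B}(\mathbb{F}_p),M)_{(p)}$, so $\mathrm{res}$ is injective on $p$-torsion. Combined with the surjectivity from the transfer, this gives the isomorphism.

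The main obstacle is bookkeeping in the Mackey formula for homology rather than cohomology, and making sure the twisted-transfer term for the big cell is correctly identified as factoring through $H_m(T_{(p)},-)=H_m(\{1\},-)=0$ for $m\geq1$; one must check that $\mathrm{B}\cap w\mathrm{B}w^{-1}=T$ (immediate from the matrix description) and that the conjugation by $w$ is an automorphism of $T$, so it does not affect the order. A clean alternative that sidesteps the Mackey formalism is to feed this into the Lyndon–Hochschild–Serre spectral sequences of the extensions $U\rtimes T = \mathrm{B}(\mathbb{F}_p)$ and $1\to U\to \mathrm{B}\to T\to 1$ versus the action of $T$ on the Sylow subgroup inside $\mathrm{SL}_2$: since $p\nmid |T|$, taking $T$-coinvariants is exact, and $H_\bullet(\mathrm{B}(\mathbb{F}_p),M)_{(p)}\cong H_\bullet(U,M)_{T,(p)}$, while the Quillen-type stable-elements description identifies $H_\bullet(\mathrm{SL}_2(\mathbb{F}_p),M)_{(p)}$ with the $\mathrm{SL}_2(\mathbb{F}_p)$-stable classes in $H_\bullet(U,M)_{(p)}$; one then checks the fusion in $\mathrm{SL}_2(\mathbb{F}_p)$ is controlled by $N_{\mathrm{SL}_2}(U)=\mathrm{B}(\mathbb{F}_p)$ (equivalently, by Burnside's fusion theorem, since $U$ is abelian all fusion is controlled by its normalizer), so every stable class already lies in the image of $H_\bullet(\mathrm{B}(\mathbb{F}_p),M)_{(p)}$. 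I would present the transfer-plus-Bruhat argument as the primary proof and remark on the fusion-theoretic interpretation.
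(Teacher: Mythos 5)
The paper does not actually prove this lemma; it simply cites Hutchinson \cite[Lemma 5.15]{h2016}, so your proposal supplies an argument the paper outsources. Your transfer-plus-Bruhat proof is correct and is the standard mechanism behind such statements: since $\B(\F_p)$ contains the Sylow $p$-subgroup $U$ of order $p$ and has index $p+1$ prime to $p$, the composite of the natural map $H_m(\B(\F_p),M)\to H_m(\SL_2(\F_p),M)$ with the transfer in one order is multiplication by $p+1$, giving surjectivity on $p$-primary parts; in the other order the double-coset formula has exactly two terms by the Bruhat decomposition $\SL_2(\F_p)=\B\sqcup \B w\B$, the identity term and a term factoring through $H_m(T,M)$ with $T=\B\cap w\B w^{-1}\simeq\F_p^\times$ of order $p-1$, which is annihilated by $p-1$ and hence has trivial $p$-part for $m\geq 1$. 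Note that the hypothesis $m\geq 1$ is used exactly at this point (for $m=0$ the coinvariants $H_0(T,M)=M_T$ need not be killed by $|T|$), and your argument respects this. Your alternative sketch via control of fusion by $N_{\SL_2(\F_p)}(U)=\B(\F_p)$ and stable elements is also viable, though the stable-elements theorem is most often stated for cohomology and would need the homological version made explicit; presenting the transfer argument as primary is the right choice.
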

\begin{proof}
See \cite[Lemma 5.15]{h2016}.
\end{proof}

\begin{thm}\label{H2-surj}
Let $n>1$ be a square-free integer and $p$ a prime such that $p\nmid n$. Then the natural maps 
\[
\begin{array}{c}
{i_1}_\ast: H_2(\Gamma_0(n,p),\z) \arr H_2(\SL_2(\z[1/n]),\z),
\end{array}
\]
\[
\begin{array}{c}
{i_2'}_\ast: H_2(\Gamma_0(n,p),\z) \arr H_2(\SL_2(\z[1/n]),\z)
\end{array}
\]
are surjective. 
\end{thm}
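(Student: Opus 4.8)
The plan combines a symmetry reduction, a transfer estimate, and a comparison of Lyndon--Hochschild--Serre spectral sequences. First, I would reduce the claim for ${i_2'}_\ast$ to the one for ${i_1}_\ast$: conjugation by $\mtxx{0}{-1}{p}{0}\in\GL_2(\q)$ normalizes $\Gamma_0(n,p)$ (its square $-pI_2$ is central), hence defines an automorphism $\omega$ of $\Gamma_0(n,p)$, and writing $\mtxx{0}{-1}{p}{0}=w^{-1}\pi_p$ with $w=\mtxx{0}{1}{-1}{0}\in\SL_2(\z[1/n])$ and recalling that $i_2'$ is conjugation by $\pi_p$ followed by the inclusion into $\SL_2(\z[1/n])$, one gets ${i_2'}_\ast={i_1}_\ast\circ\omega_\ast$ once the inner automorphism of $\SL_2(\z[1/n])$ given by $w^{-1}$ is discarded. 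Since $\omega_\ast$ is an automorphism of $H_2(\Gamma_0(n,p),\z)$, it now suffices to prove that ${i_1}_\ast$ is surjective.

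Because $[\SL_2(\z[1/n]):\Gamma_0(n,p)]=p+1$, composing the transfer $H_2(\SL_2(\z[1/n]),\z)\to H_2(\Gamma_0(n,p),\z)$ with ${i_1}_\ast$ yields multiplication by $p+1$; hence $(p+1)H_2(\SL_2(\z[1/n]),\z)\se\im({i_1}_\ast)$ and $C:=\coker({i_1}_\ast)$ is finite and annihilated by $p+1$. In particular the $p$-primary part of $C$ vanishes, and it remains to kill the $\ell$-primary part of $C$ for the primes $\ell\mid p+1$. For this I would compare the spectral sequences of the two rows of~(\ref{ext}): both are built on the normal subgroup $\Gamma(n,p)$ with quotients $\B(\F_p)\se\SL_2(\F_p)$, and since $H_2(\B(\F_p),\z)=H_2(\SL_2(\F_p),\z)=0$ the group $H_2$ in each carries a two-step filtration with bottom term $E^\infty_{0,2}$ and top quotient $E^\infty_{1,1}$, preserved by ${i_1}_\ast$. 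The map on the bottom is induced by the surjection of coinvariants $H_2(\Gamma(n,p),\z)_{\B(\F_p)}\two H_2(\Gamma(n,p),\z)_{\SL_2(\F_p)}$, hence is onto, so a diagram chase identifies $C$ with $\coker\big(E^\infty_{1,1}(\Gamma_0)\to E^\infty_{1,1}(\SL_2)\big)$, a quotient of $\coker\big(H_1(\B(\F_p),M)\to H_1(\SL_2(\F_p),M)\big)$ where $M:=H_1(\Gamma(n,p),\z)$. By Lemma~\ref{p-torsion} this corestriction map is an isomorphism on $p$-primary parts (consistent with the transfer step), so everything reduces to showing it is surjective at each prime $\ell\mid p+1$ distinct from $p$.

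That surjectivity is the heart of the matter. The module $M=H_1(\Gamma(n,p),\z)$ is finite (Theorems~\ref{p-group} and~\ref{H1}), its coinvariants $M_{\SL_2(\F_p)}$ embed into $H_1(\SL_2(\z[1/n]),\z)$ via the five-term sequence, and — using the congruence subgroup property together with Proposition~\ref{exa-2} applied to the finite quotient rings of $\z[1/n]$ — the prime-to-$p$ part of $M$ is supported only on the primes $2$ and $3$. Hence for $\ell\mid p+1$ with $\ell\geq 5$ one has $\ell\nmid|M|$ and the comparison map is trivially onto at $\ell$; the cases $\ell\in\{2,3\}$ are the genuine obstacle, and I would settle them by a Sylow-and-fusion analysis of $\SL_2(\F_p)$ — for an odd prime $\ell\mid p+1$ a Sylow $\ell$-subgroup lies in a non-split torus whose Weyl normalizer acts by inversion, forcing the relevant invariants of $H_1$ with coefficients in $M$ to vanish, while $\ell=2$ (which occurs precisely when $p$ is odd) must be handled separately through the quaternionic Sylow $2$-subgroup and the explicit identities in $\Gamma_0(n,2)$ used to prove Theorem~\ref{G0}. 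Once these two cases are in hand one concludes $C=0$, and hence both ${i_1}_\ast$ and ${i_2'}_\ast$ are surjective.
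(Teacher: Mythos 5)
Your overall architecture is the same as the paper's: compare the Lyndon/Hochschild--Serre spectral sequences of the two rows of~(\ref{ext}), use $H_2(\B(\F_p),\z)=H_2(\SL_2(\F_p),\z)=0$ to reduce surjectivity of ${i_1}_\ast$ to surjectivity of $H_1(\B(\F_p),M)\arr H_1(\SL_2(\F_p),M)$ for $M=H_1(\Gamma(n,p),\z)$, split $M$ into its $p$-primary and prime-to-$p$ parts via Theorem~\ref{p-group}, and invoke Lemma~\ref{p-torsion} on the $p$-part. Two of your additions are genuinely nice: the identity ${i_2'}_\ast={i_1}_\ast\circ\omega_\ast$ with $\omega$ conjugation by $\mtxx{0}{-1}{p}{0}$ (which does normalize $\Gamma_0(n,p)$, and differs from $\pi_p$-conjugation by the inner automorphism of $\SL_2(\z[1/n])$ given by $w$) cleanly disposes of the second map, where the paper instead reruns the whole argument with the lower-triangular Borel $\B'(\F_p)$; and the transfer bound $(p+1)H_2(\SL_2(\z[1/n]),\z)\se\im({i_1}_\ast)$ is correct, though it ends up redundant.

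The gap is in what you yourself call the heart of the matter. For the primes $\ell\in\{2,3\}$, $\ell\neq p$, you do not prove surjectivity of the corestriction on $\ell$-parts; you only announce ``a Sylow-and-fusion analysis,'' and the plan you sketch for $\ell=2$ is misdirected: the quaternion Sylow $2$-subgroups of $\SL_2(\F_p)$ have nontrivial abelianization, so $H_1$ of a Sylow $2$-subgroup with coefficients in $M_{(2)}$ does not vanish and you would still have to compute stable elements; and the ``explicit identities in $\Gamma_0(n,2)$'' from the proof of Theorem~\ref{G0} concern the group $\Gamma_0(n,2)$ (the case where the auxiliary prime is $2$), not the $2$-local structure of $\SL_2(\F_p)$ for odd $p$, so they are irrelevant here. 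The step you are missing has a one-line proof, and it is the one the paper uses: the prime-to-$p$ part $\GG_p''$ of $M$ injects equivariantly into $H_1(\SL_2(\z[1/n]),\z)$ (by Theorem~\ref{p-group} and diagram~(\ref{diagram})), on which conjugation acts trivially, so $\SL_2(\F_p)$ acts trivially on $\GG_p''$; hence $H_1(\SL_2(\F_p),\GG_p'')\simeq H_1(\SL_2(\F_p),\z)\otimes_\z\GG_p''=0$, because $H_1(\SL_2(\F_p),\z)$ is trivial for $p>3$, $\z/3$ for $p=3$, $\z/2$ for $p=2$ (Proposition~\ref{exa-2}), and in each case its order is coprime to $|\GG_p''|$. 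Thus the target of the corestriction vanishes away from $p$ and surjectivity there is automatic, with no fusion argument and no need for the transfer bound. With this substitution your proof closes; as written, the decisive step is asserted rather than proved, and partly by a method that would not work as described.
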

\begin{proof}
We first prove the claim for ${i_1}_\ast$. 
The morphism of extensions (\ref{ext}) gives us the morphism
of spectral sequences
\[
\begin{tikzcd}
{E'}_{r,s}^2=H_r(\B(\F_p), H_s(\Gamma(n,p),\z)) \ar[r,Rightarrow] \ar[d]& H_{r+s}(\Gamma_0(n,p),\z)\ar[d]\\
E_{r,s}^2=H_r(\SL_2(\F_p), H_s(\Gamma(n,p),\z))\ar[r,Rightarrow]& H_{r+s}(\SL_2(\z[1/n]),\z).
\end{tikzcd}
\]
Since $H_2(\SL_2(\F_p),\z)=0$ and $H_2(\B(\F_p),\z)=0$ (see Section \ref{sec3}), we have $E_{2,0}^2=0$ and
${E'}_{2,0}^2=0$. By an easy analysis of these spectral sequences, we obtain the commutative diagram with exact rows
\[
\begin{tikzcd}
H_2(\Gamma(n,p),\z) \ar[r] \ar[d, equal] & H_2(\Gamma_0(n,p),\z) \ar[r] \ar[d] &{E'}_{1,1}^\infty \ar[r]\ar[d] & 0\\
H_2(\Gamma(n,p),\z) \ar[r] & H_2(\SL_2(\z[1/n]),\z) \ar[r] & E_{1,1}^\infty \ar[r] & 0.
\end{tikzcd}
\]
So, to prove the claim, we may show that the map ${E'}_{1,1}^\infty \arr E_{1,1}^\infty$ is 
surjective. Again, from the above morphism of spectral sequences, we obtain the commutative 
diagram with exact rows
\[
\begin{tikzcd}
H_1(\B(\F_p),H_1(\Gamma(n,p),\z))={E'}_{1,1}^2 \ar[r, two heads] \ar[d] & {E'}_{1,1}^\infty\ar[d]\\
H_1(\SL_2(\F_p),H_1(\Gamma(n,p),\z))=E_{1,1}^2 \ar[r, two heads] & E_{1,1}^\infty .
\end{tikzcd}
\]
So, to prove the surjectivity of the right vertical map, it is sufficient to prove the surjectivity 
of the left vertical map. Let $\KK_p$ and $\GG_p$ be the kernel and the image of the natural map
\[
\begin{array}{c}
i_\ast: H_1(\Gamma(n,p),\z) \arr H_1(\SL_2(\z[1/n]),\z),
\end{array}
\]
respectively. So, we have the exact sequence
\[
0 \arr \KK_p \arr H_1(\Gamma(n,p),\z) \arr \GG_p \arr 0.
\]
By Theorem~\ref{p-group}, $\KK_p$ is a $p$-group. By studying the diagrams (\ref{diag1}), (\ref{diag--1}),  
(\ref{diag-1}),  (\ref{diag--11}) and  (\ref{diag-11}), we see that 
\[
\begin{array}{c}
\GG_p\simeq H_1(\SL_2(\z[1/n]),\z), \ \ \ \ \ \ \ \text{for $p>3$},
\end{array}
\]
and 
\[
\GG_3\simeq 
\begin{cases}
0 & \text{if $2\mid n$}  \\ 
\z/4 & \text{if $2\nmid n$}
\end{cases}, \ \ \ \ \ \ \ 
\GG_2\simeq 
\begin{cases}
\z/2 & \text{if $3\mid n$}  \\ 
\z/6 & \text{if $3\nmid n$}
\end{cases}.
\]
It follows from these that 
\[
H_1(\Gamma(n,p),\z)\simeq 
\begin{cases}
\KK_p \oplus \GG_p & \text{if $p>3$}  \\ 
\KK_3  & \text{if $p=3$, $2\mid n$}  \\
\KK_3 \oplus \z/4 & \text{if $p=3$, $2\nmid n$}  \\
\KK_2' \oplus \z/3 & \text{if $p=2, 3\nmid n$}\\
\KK_2' & \text{if $p=2, 3\mid n$}
\end{cases},
\]
where $\KK_2'$ is a $2$-group.
All these show that, for any prime $p$,
\[
H_1(\Gamma(n,p),\z)\simeq \KK_p'' \oplus \GG_p'',
\]
where $\KK_p''$ is a $p$-group and $\GG_p''$ is a subgroup of $H_1(\SL_2(\z[1/n]),\z)$ 
such that $p\nmid |\GG_p''|$. Now we are ready to study the map
\begin{equation}\label{ttt}
H_1(\B(\F_p), H_1(\Gamma(n,p),\z)) \arr H_1(\SL_2(\F_p), H_1(\Gamma(n,p),\z)).
\end{equation}
From this and the above isomorphism, we obtain the map
\[
H_1(\B(\F_p),\KK_p'')\oplus H_1(\B(\F_p),\GG_p'')\arr  
H_1(\SL_2(\F_p),\KK_p'')\oplus H_1(\SL_2(\F_p),\GG_p'').
\]
The induced map 
\[
H_1(\B(\F_p),\KK_p'')\arr  H_1(\SL_2(\F_p),\KK_p'')
\]
is a map of $p$-groups (see \cite[Corollary 11.8.12]{v2003}), so by Lemma \ref{p-torsion} it is an isomorphism.
Since $\SL_2(\F_p)$ acts trivially on the group $H_1(\SL_2(\z[1/n]),\z)$, it acts trivially 
on $\GG_p''$. Now, by the Universal Coefficient Theorem, we have
\begin{align*}
H_1(\SL_2(\F_p), \GG_p'') \simeq H_1(\SL_2(\F_p), \z)\otimes_\z \GG_p''.
\end{align*}
By Proposition \ref{exa-2},
$
H_1(\SL_2(\F_p), \z)\simeq \begin{cases}
0 & \text{if $p>3$}\\
\z/3 & \text{if $p=3$}\\
\z/2 & \text{if $p=2$}
 \end{cases}$.
This shows that the order of $H_1(\SL_2(\F_p), \z)$ and the order of $\GG_p''$ are coprime. Hence,
\[
H_1(\SL_2(\F_p), \GG_p'')=0.
\]
All these imply that the map (\ref{ttt}) is surjective. This completes the proof of the surjectivity of 
${i_1}_\ast$. 

Now consider the map ${i_2'}_\ast$. We remind that $i_2': \Gamma_0(n,p) \harr \SL_2(\z[1/n])$ is given 
by ${\mtxx a b c d}\mapsto {\mtxx a {pb} {p^{-1}c} d}$. Let $\Gamma_0'(n,p)$ be the image of $i_2'$. 
Thus
\[
\begin{array}{c}
\Gamma_0'(n, p):=\im(i_2')=\bigg\{{\mtxx a b c d}\in \SL_2(\z[1/n]) : p\mid b\bigg\}.
\end{array}
\]
Let $i_2''$ denotes the inclusion $\Gamma_0'(n, p) \harr \SL_2(\z[1/n])$. So, to prove the surjectivity of 
${i_2'}_\ast$, it is sufficient to prove the surjectivity of ${i_2''}_\ast$.

Now the proof of the surjectivity of ${i_2''}_\ast$ follows a similar pass as the proof of the surjectivity of 
${i_1}_\ast$. Here, we have to study the Lyndon/Hochschild-Serre spectral sequence associated to the morphism of
extensions
\begin{equation*}
\begin{tikzcd}
1 \ar[r] & \Gamma(n,p) \ar[r]\ar[d, equal]& \Gamma_0'(n,p) \ar[r, "{\pi'}"]\ar[d, "{i_2''}"]& \B'(\F_p) \ar[r] \ar[d, hook]& 1\\
1 \ar[r] & \Gamma(n,p) \ar[r]& \SL_2(\z[1/n]) \ar[r]& \SL_2(\F_p) \ar[r]& 1,
\end{tikzcd}
\end{equation*}
where
\[
\B'(\F_p):=\bigg\{ {\mtxx x 0 y {x^{-1}}}\in \SL_2(\F_p): x\in \F_p^\times, y\in \F_p\bigg\} \ \ \text{and} \ \ \
\pi'({\mtxx a b c d})={\mtxx {\overline{a}} 0 {\overline{c}} {\overline{a}^{-1}}}.
\]
\end{proof}

\section{The second homology of \texorpdfstring{$\SL_2(\z[1/n])$}{Lg}}\label{sec6}

\begin{prp}\label{exact1}
Let $n>1$ be a square-free integer, $p$ a prime such that $p\nmid n$ and $d:=\gcd\{m^2-1:m\mid n\}$. 
\par {\rm (i)} If $p>3$ and $p\nmid d$, then we have the exact sequence
\[
\begin{array}{c}
H_2(\SL_2(\z[1/n]),\z) \arr H_2(\SL_2(\z[1/pn]),\z) \arr \F_p^\times \arr 1.
\end{array}
\]
\par {\rm (ii)} If $p=3$ and $d=3t$, where $3\nmid t$ (e.g. when $2\mid n$ or $5\mid n$), then
we have the exact sequence
\[
\begin{array}{c}
H_2(\SL_2(\z[1/n]),\z) \arr H_2(\SL_2(\z[1/3n]),\z) \arr \F_3^\times \arr 1.
\end{array}
\]
\par {\rm (iii)} If $p=2$ and $d=8t$, where $2\nmid t$ (e.g. when $3\mid n$ or $5\mid n$), 
then we have the exact sequence
\[
\begin{array}{c}
H_2(\SL_2(\z[1/n]),\z) \arr H_2(\SL_2(\z[1/2n]),\z) \arr \F_2 \arr 0.
\end{array}
\]
\end{prp}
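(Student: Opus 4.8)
The plan is to extract the stated three-term exact sequence from the Mayer--Vietoris sequence of Section~\ref{sec3}, using Theorem~\ref{H2-surj} to simplify the $H_2$-level and Theorems~\ref{H1} and~\ref{G0} to control the $H_1$-level. Recall from that sequence the maps $\beta_2(y,z)={j_2'}_\ast(z)-{j_1}_\ast(y)$, the connecting homomorphism $\partial\colon H_2(\SL_2(\z[1/pn]),\z)\arr H_1(\Gamma_0(n,p),\z)$, and $\alpha_1(x)=({i_1}_\ast(x),{i_2'}_\ast(x))\colon H_1(\Gamma_0(n,p),\z)\arr H_1(\SL_2(\z[1/n]),\z)^{\oplus 2}$. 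First I would note that $j_1\circ i_1=j_2'\circ i_2'$ as homomorphisms $\Gamma_0(n,p)\arr\SL_2(\z[1/pn])$, since each of these is simply the inclusion of $\Gamma_0(n,p)$ into $\SL_2(\z[1/pn])$. Passing to second homology and invoking the surjectivity of ${i_1}_\ast$ and ${i_2'}_\ast$ provided by Theorem~\ref{H2-surj}, one obtains
\[
\im({j_1}_\ast)={j_1}_\ast {i_1}_\ast H_2(\Gamma_0(n,p),\z)={j_2'}_\ast {i_2'}_\ast H_2(\Gamma_0(n,p),\z)=\im({j_2'}_\ast),
\]
and hence $\im(\beta_2)=\im({j_1}_\ast)+\im({j_2'}_\ast)=\im({j_1}_\ast)$.

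By exactness of the Mayer--Vietoris sequence at $H_2(\SL_2(\z[1/pn]),\z)$ and at $H_1(\Gamma_0(n,p),\z)$, the map $\partial$ induces an isomorphism $H_2(\SL_2(\z[1/pn]),\z)/\im(\beta_2)\simeq\ker(\alpha_1)$. Combining this with the equality $\im(\beta_2)=\im({j_1}_\ast)$ established above yields the exact sequence
\[
H_2(\SL_2(\z[1/n]),\z)\overset{{j_1}_\ast}{\larr}H_2(\SL_2(\z[1/pn]),\z)\larr\ker(\alpha_1)\larr 0 ,
\]
where ${j_1}_\ast$ is induced by the standard inclusion $\z[1/n]\se\z[1/pn]$. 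It now remains to identify $\ker(\alpha_1)$ with $\F_p^\times$ in case~(i), with $\F_3^\times$ in case~(ii), and with $\F_2$ in case~(iii).

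For this I would use Theorem~\ref{G0} to decompose $H_1(\Gamma_0(n,p),\z)$ as $H_1(\SL_2(\z[1/n]),\z)$ together with the appropriate torus contribution, and then compare orders. Exactness of the Mayer--Vietoris sequence gives $|\im(\alpha_1)|=|\ker(\beta_1)|$, and $\ker(\beta_1)$ is read off from the explicit formula $\beta_1(\overline a,\overline b)=\overline{pb-a}$, the computation of $H_1(\SL_2(\z[1/n]),\z)$ and $H_1(\SL_2(\z[1/pn]),\z)$ in Theorem~\ref{H1}, and the surjectivity of $\beta_1$. A short calculation then gives $|\ker(\alpha_1)|=p-1$ in case~(i) and $|\ker(\alpha_1)|=2$ in cases~(ii) and~(iii). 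In cases~(ii) and~(iii) the target group has prime order $2$, so this order count alone identifies $\ker(\alpha_1)$. In case~(i) one has $\ker(\alpha_1)\se\ker({i_1}_\ast)$; by the splitting of Theorem~\ref{G0}(i) together with the surjectivity of ${i_1}_\ast$ visible in diagram~(\ref{diag1}), $\ker({i_1}_\ast)$ embeds into $\F_p^\times$ and has order $p-1$, so $\ker({i_1}_\ast)\simeq\F_p^\times$, and therefore $\ker(\alpha_1)=\ker({i_1}_\ast)\simeq\F_p^\times$.

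The first two paragraphs form a robust formal skeleton that relies only on Theorem~\ref{H2-surj} and the structure of the Mayer--Vietoris sequence; I expect the main obstacle to be the order bookkeeping in the last paragraph. There one must treat separately the sub-cases $p>3$, then $p=3$ with $2\mid n$ and with $2\nmid n$, and then $p=2$ with $3\mid n$ and with $3\nmid n$, in each case keeping track of how ${i_1}_\ast$ and ${i_2'}_\ast$ act on the summands produced by Theorem~\ref{G0} and invoking the appropriate one of the diagrams~(\ref{diag1})--(\ref{diag-11}); additional care is required because $H_1(\SL_2(\z[1/pn]),\z)$ genuinely differs from $H_1(\SL_2(\z[1/n]),\z)$ when $p\in\{2,3\}$.
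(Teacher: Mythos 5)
Your proposal is correct and follows essentially the same route as the paper, whose proof is a one-line citation of exactly the ingredients you use: the Mayer--Vietoris sequence of Section~\ref{sec3}, Theorem~\ref{H2-surj} to identify $\im(\beta_2)$ with $\im({j_1}_\ast)$, and Theorems~\ref{H1} and~\ref{G0} (plus diagram~(\ref{diag1})) to compute $\ker(\alpha_1)$. The order bookkeeping you defer does check out in every sub-case, and your observation that $j_1\circ i_1=j_2'\circ i_2'$ together with the surjectivity of ${i_1}_\ast$ and ${i_2'}_\ast$ is precisely the point that makes the reduction to $\ker(\alpha_1)$ work.
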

\begin{proof}
These follow from the Mayer-Vietoris exact sequence (see Section \ref{sec3}), Theorem~\ref{G0} and Theorem~\ref{H2-surj}.
\end{proof}

Let $n>1$ be an integer and let $p$ be a prime such that $p\nmid n$. Let $\delta_p$ be the composition
\[
H_2(\SL_2(\z[1/pn]),\z) \arr H_1(\Gamma_0(n,p),\z) \two H_1(\B(\F_p),\z)\two \F_p^\times.
\]

\begin{lem}\label{H2-to-Fp}
If $n>1$ is a square-free integer and $p>3$ is a prime such that $p\nmid n$, then the map
\[
\begin{array}{c}
\delta_p: H_2(\SL_2(\z[1/pn]),\z) \arr \F_p^\times
\end{array}
\]
is surjective. 
\end{lem}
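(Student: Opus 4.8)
The plan is to factor the map $\delta_p$ through the Mayer–Vietoris boundary map and exploit the splitting of $H_1(\Gamma_0(n,p),\z)$ obtained in Theorem~\ref{G0}(i). Since $p>3$ and $p\nmid n$, we are in Case (i) of Section~\ref{sec3}: by Theorem~\ref{H1} the inclusion $\SL_2(\z[1/n])\harr\SL_2(\z[1/pn])$ induces an isomorphism on $H_1$, so the tail of the Mayer–Vietoris sequence reads
\[
H_2(\SL_2(\z[1/pn]),\z)\overset{\partial}{\larr} H_1(\Gamma_0(n,p),\z)\overset{\alpha_1}{\larr} H_1(\SL_2(\z[1/n]),\z),
\]
and $\partial$ has image equal to $\ker\alpha_1$. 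Here $\alpha_1(x)=({i_1}_\ast(x),{i_2'}_\ast(x))$ viewed in the diagonal copy, but because $H_1(\SL_2(\z[1/n]),\z)\simeq H_1(\SL_2(\z[1/pn]),\z)$ the map $\beta_1$ is an isomorphism on each factor modulo the antidiagonal, so effectively $\ker\alpha_1=\ker({i_1}_\ast\colon H_1(\Gamma_0(n,p),\z)\to H_1(\SL_2(\z[1/n]),\z))$. By diagram~(\ref{diag1}), this kernel surjects onto $\F_p^\times$ via the map $H_1(\Gamma_0(n,p),\z)\to\F_p^\times$ appearing in the top row; indeed Theorem~\ref{G0}(i) gives $H_1(\Gamma_0(n,p),\z)\simeq H_1(\SL_2(\z[1/n]),\z)\oplus\F_p^\times$ with ${i_1}_\ast$ the projection onto the first summand, so $\ker{i_1}_\ast$ is exactly the $\F_p^\times$ summand.

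First I would assemble the commutative square relating the Mayer–Vietoris boundary to diagram~(\ref{diag1}): the composite
\[
H_2(\SL_2(\z[1/pn]),\z)\overset{\partial}{\larr}H_1(\Gamma_0(n,p),\z)\two H_1(\B(\F_p),\z)\overset{\simeq}{\larr}\F_p^\times
\]
is by definition $\delta_p$. Since $\im\partial=\ker\alpha_1$ and, as noted, $\ker\alpha_1$ maps isomorphically onto the $\F_p^\times$ summand of $H_1(\Gamma_0(n,p),\z)$ under the bottom-right horizontal arrow of~(\ref{diag1}), the restriction of the surjection $H_1(\Gamma_0(n,p),\z)\two\F_p^\times$ to $\im\partial$ is surjective. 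Composing, $\delta_p$ is surjective. The one point requiring care is the identification $\im\partial=\ker{i_1}_\ast$: one must check that the two occurrences of $H_1(\SL_2(\z[1/n]),\z)$ in the Mayer–Vietoris target, together with the isomorphism $\beta_1$ from Theorem~\ref{H1}, really do reduce $\ker\alpha_1$ to the kernel of a single induced map; this is the standard fact that for an amalgam $G\ast_H G'$ with $H_1(H)\to H_1(G)$ and $H_1(H)\to H_1(G')$ such that one of them becomes iso after the amalgamation, the boundary surjects onto the kernel of the other, and here it follows directly from chasing the explicit formulas $\alpha_1(x)=({i_1}_\ast(x),{i_2'}_\ast(x))$, $\beta_1(\bar a,\bar b)=\overline{pb-a}$ already recorded in Section~\ref{sec3}.

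The main obstacle is bookkeeping rather than conceptual: one has to be scrupulous about which of the two maps ${i_1}_\ast$ and ${i_2'}_\ast$ is being projected onto, and about the fact that in diagram~(\ref{diag1}) it is ${i_1}_\ast$ that is the surjection with kernel $\F_p^\times$. A clean way to sidestep the ambiguity is to observe that both ${i_1}_\ast$ and ${i_2'}_\ast$ on $H_1(\Gamma_0(n,p),\z)$ kill the $\F_p^\times$ summand (the diagonal torus maps into $\SL_2(\z[1/n])^\ab$ trivially after the isomorphism of Theorem~\ref{H1}, since every diagonal matrix is a commutator of elementary matrices by the identity $D(m)=[\,\cdots\,]$ used in the proof of Theorem~\ref{G0}), so $\alpha_1$ annihilates that summand, whence $\F_p^\times\se\ker\alpha_1=\im\partial$, and $\delta_p$ restricted to this copy of $\F_p^\times$ is the identity. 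That gives surjectivity immediately. I expect the whole argument to take under half a page once the diagram from Section~\ref{sec3} and Theorem~\ref{G0}(i) are invoked.
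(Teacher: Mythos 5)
Your overall strategy coincides with the paper's: factor $\delta_p$ through the Mayer--Vietoris boundary $\partial$, identify $\im\partial$ with $\ker({i_1}_\ast\colon H_1(\Gamma_0(n,p),\z)\to H_1(\SL_2(\z[1/n]),\z))$ (which is fine for $p>3$, since multiplication by $p$ is invertible on $H_1(\SL_2(\z[1/n]),\z)$), and then show that this kernel surjects onto $\F_p^\times$. The gap is in how you justify that last, crucial step. You invoke Theorem~\ref{G0}(i) to get the splitting $H_1(\Gamma_0(n,p),\z)\simeq H_1(\SL_2(\z[1/n]),\z)\oplus\F_p^\times$ with ${i_1}_\ast$ the first projection. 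But Theorem~\ref{G0}(i) carries the extra hypothesis $p\nmid d$, where $d=\gcd\{m^2-1:m\mid n\}$, and the lemma you are proving has no such hypothesis. This is not a vacuous condition: for $n=11$, $p=5$ one has $d=120$ and $5\mid d$, so Theorem~\ref{G0}(i) simply does not apply, and your proof says nothing in that case. The paper avoids this by applying the Snake lemma directly to diagram~(\ref{diag1}): the left vertical map there is a map of coinvariants for a larger group, hence surjective, and the bottom map is an isomorphism (since $H_1(\SL_2(\F_p),\z)=0$ for $p>3$), so the Snake lemma gives $\ker({i_1}_\ast)\two\F_p^\times$ unconditionally. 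That is the missing ingredient; with it your argument closes without any appeal to Theorem~\ref{G0}.

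Your proposed ``clean sidestep'' does not repair this. First, it still presupposes the splitting of $H_1(\Gamma_0(n,p),\z)$, which is exactly what is unavailable when $p\mid d$. Second, the claim that the $\F_p^\times$ summand is realized by diagonal matrices killed by ${i_1}_\ast$ and ${i_2'}_\ast$ is too optimistic: the diagonal matrices $D(u)$ lying in $\Gamma_0(n,p)$ have $u\in\z[1/n]^\times=\{\pm m : m\mid n^\infty\}$, so their images $\overline{u}$ generate only the subgroup of $\F_p^\times$ generated by $-1$ and the primes dividing $n$, which is a proper subgroup in general. One cannot exhibit a full copy of $\F_p^\times$ inside $\ker\alpha_1$ this way; the surjection onto $\F_p^\times$ really has to come from the Snake-lemma argument on~(\ref{diag1}).
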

\begin{proof}
By Theorem \ref{H1}, $H_1(\SL_2(\z[1/n]),\z)\simeq H_1(\SL_2(\z[1/pn]),\z)$.
Thus, from the Mayer-Vietoris exact sequence, we get the exact sequence
\[
\begin{array}{c}
H_2(\SL_2(\z[1/pn]),\z) \arr H_1(\Gamma_0(n,p),\z) \overset{{i_1}_\ast}{\larr} 
H_1(\SL_2(\z[1/n]),\z)\arr 0.
\end{array}
\]
By applying the Snake lemma to the diagram (\ref{diag1}), we obtain the exact sequence
\[
1 \arr \frac{[\Gamma(n,p), \SL_2(\z[1/n])]}{[\Gamma(n,p), \Gamma_0(n,p)]} \arr 
\ker({i_1}_\ast) \arr \F_p^\times \arr 1.
\]
Now, the composition
\[
\begin{array}{c}
H_2(\SL_2(\z[1/pn]),\z) \two \ker({i_1}_\ast) \two \F_p^\times 
\end{array}
\]
is surjective and coincides with the above composition. 
\end{proof}

\begin{rem}
We believe that the above lemma is correct for $p = 3$ and $p = 2$  
(with $\F_2^\times$ replaced by $\F_2$ in the case $p = 2$).  
Since this result was not required, we did not attempt a proof.
\end{rem}


If $p_1,\dots, p_k$, $p_i>2$, are distinct primes such that $p_i\nmid n$, then the maps
$\delta_{p_i}$ induce the natural map
\[
\begin{array}{c}
\delta_{p_1, \dots,p_k}:=(\delta_{p_i})_{i=1}^k:H_2(\SL_2(\z[{1}/{(p_1\cdots p_kn)}]),\z) \arr 
\bigoplus_{i=1}^k \F_{p_i}^\times.
\end{array}
\]

Let $F$ be a field and let $K_2(F)$ be the second $K$-group of $F$. It is known that
\[
K_2(F)\simeq H_2(\SL(F),\z).
\]
By a theorem of Matsumoto (see \cite[Theorem 11.1]{milnor1971}), we have an isomorphism
\[
K_2(F) \simeq (F^\times \otimes_\z  F^\times)/\lan a\otimes(1-a): a\in F\backslash \{0,1\}\ran.
\]
We denote the image of $\overline{a\otimes b}$, in $K_2(F)$, by $\{a,b\}$.

Let $A$ be a Euclidean domain with quotient field $F$. If $\ppp=\lan \pi \ran\in A$ is a non-zero prime ideal of $A$,
then $A_\ppp$ is a discrete valuation ring. Let $k(\ppp):=A_\ppp/\ppp A_\ppp$ be the residue field of $A_\ppp$.
Since $A$  is a Euclidean domain, $k(\ppp)\simeq A/\ppp$.
The ring $A_\ppp$ induces a discrete valuation 
\[
v_\ppp: F^\times \arr \z
\]
on $F$. This valuation defines the following {\it tame symbol} on $K_2(F)$:
\[
\tau_\ppp: K_2(F) \arr k(\ppp)^\times, \ \ \ \{a,b\}\mapsto (-1)^{v_p(a)v_p(b)}\displaystyle
\overline{\Bigg(\frac{b^{v_p(a)}}{a^{v_p(b)}}\Bigg)}
\]
(see \cite[p. 98]{milnor1971} or \cite[Lemma 6.3]{wei2013}). The next result is proved by Hutchinson.

\begin{prp}[Hutchinson]
Let $n>1$ be an integer and $p>2$ a prime such that $p\nmid n$. Then the map $\delta_p$ coincides with 
the composition
\[
\begin{array}{c}
H_2(\SL_2(\z[1/pn]),\z) \arr H_2(\SL(\z[1/pn]),\z) \arr H_2(\SL(\q),\z) \simeq K_2(\q) 
\overset{\tau_p}{\larr} \F_p^\times,
\end{array}
\]
where $\tau_p$ is the tame symbol on $K_2(\q)$ induced by the prime ideal $\lan p\ran$ of $\z$.
\end{prp}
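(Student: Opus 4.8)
The plan is to identify the map $\delta_p$ concretely and then recognize the composition in the statement as the same map, using naturality and the explicit description of the tame symbol. First I would recall from Section~\ref{sec3} that $\delta_p$ is built from the connecting map $H_2(\SL_2(\z[1/pn]),\z) \arr H_1(\Gamma_0(n,p),\z)$ of the Mayer--Vietoris sequence, followed by the projections $H_1(\Gamma_0(n,p),\z)\two H_1(\B(\F_p),\z)\two \F_p^\times$. The key geometric input is that the amalgam $\SL_2(\z[1/pn])\simeq\SL_2(\z[1/n])\ast_{\Gamma_0(n,p)}\SL_2(\z[1/n])^{\pi_p}$ comes from the action on the Bruhat--Tits tree of $\SL_2(\q_p)$, and the conjugating element $\pi_p=\mtxx{p}{0}{0}{1}$ is precisely the element whose image under a suitable valuation-type homomorphism records the power of $p$. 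Concretely, the connecting homomorphism sends a $2$-cycle to a $1$-cycle supported on the edge stabilizer, and composing with the retraction to the diagonal torus $\F_p^\times$ of $\B(\F_p)$ recovers the ``$p$-adic valuation of the determinant of a lift'' data.

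Next I would use the stability/localization machinery for $K_2$. The composition in the statement factors as $H_2(\SL_2(\z[1/pn]),\z)\arr H_2(\SL(\z[1/pn]),\z)\overset{\sim}{\to}K_2(\z[1/pn])$, and then the localization sequence in $K$-theory for the Dedekind domain $\z[1/n]$ with the single prime $\langle p\rangle$ inverted gives an exact sequence $K_2(\z[1/n])\arr K_2(\z[1/pn])\overset{\partial}{\larr}K_1(\F_p)=\F_p^\times$, where $\partial$ is exactly the tame symbol $\tau_p$ (this is the standard identification, e.g. via \cite{milnor1971} or \cite{wei2013}, and agrees with $\tau_p$ on $K_2(\q)$ by compatibility of localization sequences). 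So the content is to check that the Mayer--Vietoris boundary map $\delta_p$ on group homology matches the $K$-theoretic boundary map $\partial=\tau_p$ under the Hurewicz/stabilization map $H_2(\SL_2)\arr H_2(\SL)=K_2$. Both maps are ``boundary maps'' attached to the same arithmetic datum (inverting $p$), so I would set up a commutative diagram comparing the two constructions: the Mayer--Vietoris sequence of the amalgam versus the long exact sequence coming from the tree action of $\SL_2(\q_p)$, and on the $K_2$ side the fact (due to Hutchinson, cf. the results cited) that the $\SL_2$-level boundary map is compatible with stabilization.

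The cleanest route is probably to exhibit generators. Since $\SL_2(\z[1/pn])$ is generated by $E_{21}(1)$ and $E_{12}(-1/(pn))$, and $H_2$ is generated (after the relevant reductions, using $\GE_2$-structure and the symbol calculus) by Steinberg-type symbols $\{u,p\}$ for units $u$ of $\z[1/n]$, I would evaluate both composites on such a symbol: the tame symbol gives $\tau_p(\{u,p\})=\overline{u}^{-1}$ (or $\overline{u}$ up to the sign/inversion convention), while a direct trace through the Mayer--Vietoris boundary — using that $\{u,p\}$ is represented by a commutator-type $2$-cycle involving $\diag(u,u^{-1})$ and $\pi_p$-conjugation — lands in the $\F_p^\times$-component as the class of $\overline{u}$. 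Since these symbols generate the relevant quotient of $K_2(\q)$ that surjects onto $\F_p^\times$ via $\tau_p$, agreement on generators suffices. The main obstacle I anticipate is the bookkeeping in the explicit cycle-level computation of the Mayer--Vietoris connecting map: one must carefully choose representatives for a $2$-cycle whose class maps to $\{u,p\}$, push it through the zig-zag of the amalgamated-product homology sequence, and see that the diagonal component of its image in $H_1(\Gamma_0(n,p),\z)$ is the expected unit — this is where sign conventions and the choice of section $s:\F_p^\times\arr\B(\F_p)$ enter, and where I would lean on Hutchinson's prior analysis in \cite{h2016} rather than redo everything from scratch.
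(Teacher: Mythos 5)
First, a remark on the comparison you were asked to be measured against: the paper does not actually prove this proposition. Its ``proof'' is the single line ``See \cite[Proposition 5.6]{h2016}'', so there is no in-paper argument for your sketch to match. Judged on its own terms, your proposal has the right overall architecture, and it is close in spirit to what Hutchinson does: identify $\delta_p$ as the Mayer--Vietoris (equivalently, Bruhat--Tits tree) connecting map composed with the retraction onto the torus of $\B(\F_p)$, identify $\tau_p$ with the boundary map of the $K$-theoretic localization sequence at the prime $\lan p\ran$, and then compare the two boundary maps under stabilization $\SL_2\arr\SL$.

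However, the step you present as ``the cleanest route'' contains a genuine logical gap. To prove that two homomorphisms out of $H_2(\SL_2(\z[1/pn]),\z)$ coincide, you must check them on a generating set of the \emph{source}; checking them on elements whose images merely generate the \emph{target} $\F_p^\times$ proves nothing. The Steinberg-type classes $\{u,p\}$ are not known to generate $H_2(\SL_2(\z[1/pn]),\z)$, and your appeal to ``$\GE_2$-structure and the symbol calculus'' does not supply this: the unstable second homology of $\SL_2$ over these rings is exactly the delicate object the whole paper is about, it has a free part of rank $r_{pn}\geq 1$ (Proposition \ref{rank}), and symbol classes, whose stabilizations land in the finite group $K_2(\z[1/pn])$, have no reason to span it. A repairable version of your argument would instead observe that \emph{both} composites vanish on the image of $H_2(\SL_2(\z[1/n]),\z)\oplus H_2(\SL_2(\z[1/n]),\z)$ --- the Mayer--Vietoris composite by exactness, the tame-symbol composite because classes coming from $K_2(\z[1/n])$ are supported away from $p$ --- so that both factor through the cokernel, which embeds into the finite group $H_1(\Gamma_0(n,p),\z)$; one must then identify the two induced maps there. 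But that identification is precisely the nontrivial content of Hutchinson's Proposition 5.6, and at the point where your sketch says you would ``lean on Hutchinson's prior analysis'' for the cycle-level computation of the connecting map, you are in effect citing the statement to be proved.
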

\begin{proof}
See \cite[Proposition 5.6]{h2016}.
\end{proof}

For a prime $p$, let 
\[
\begin{array}{c}
r_p:=\rank\ H_2(\SL_2(\z[1/p]),\z).
\end{array}
\]
By Theorem \ref{AN} (of Adem-Naffah), we have
\[
\begin{array}{c}
r_p:= \begin{cases}
1 & \text{if $p=2,3$}\\
(p-7)/6 & \text{if $p\equiv 1 \pmod {12}$}\\
(p+1)/6 & \text{if $p\equiv 5 \pmod {12}$}\\
(p-1)/6 & \text{if $p\equiv 7 \pmod {12}$}\\
(p+7)/6 & \text{if $p\equiv 11 \!\!\!\pmod {12}$}
\end{cases}
\end{array}.
\]
Observe that $r_p$ is always odd. Clearly, this is true for $p=2,3$. If $p\equiv 1 \pmod {12}$, 
then $p-7\equiv -6 \pmod {12}$. From this, we have $p-7=12k-6$ and thus 
\[
r_p=(p-7)/6=2k-1
\]
is odd. A parallel argument proves the results for the remaining cases.

\begin{exa}

The primes $2$, $3$, $5$, $7$ and $13$ are the only primes for which the value of $r_p$ is $1$. Beyond 
these, for any odd integer $l>1$, there can be no more than four primes with $r_p$ equal to $l$. 
\end{exa}

The following theorem (Theorem B from the introduction) will now be demonstrated.

\begin{thm}\label{exact2}
Let $n=p_1 \cdots p_l$, $l>1$, where $p_i$'s are distinct primes such that $r_{p_1}\leq \dots \leq r_{p_l}$. 
When $r_{p_i}=r_{p_{i+1}}$, for some $i$, we assume that $p_i < p_{i+1}$. 
Then we have the exact sequence
\[
\begin{array}{c}
H_2(\SL_2(\z[1/p_1]),\z) \arr H_2(\SL_2(\z[1/n]),\z) \arr 
\bigoplus_{i=2}^l \F_{p_i}^\times \arr 1.
\end{array}
\]
\end{thm}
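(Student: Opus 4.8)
The plan is to induct on $l$, the number of prime factors of $n$, building up the ring $\z[1/n]$ one prime at a time and at each step appending a copy of $\F_p^\times$ via the Mayer--Vietoris sequence. Write $n_j := p_1 \cdots p_j$ for $1 \le j \le l$, so $n_1 = p_1$ and $n_l = n$. The base case $j=1$ is trivial since the stated sequence reads $H_2(\SL_2(\z[1/p_1]),\z) \to H_2(\SL_2(\z[1/p_1]),\z) \to 0 \to 1$ with the first map the identity. For the inductive step, suppose the exact sequence
\[
H_2(\SL_2(\z[1/p_1]),\z) \arr H_2(\SL_2(\z[1/n_j]),\z) \arr \bigoplus_{i=2}^j \F_{p_i}^\times \arr 1
\]
has been established, and adjoin the prime $p_{j+1}$, which does not divide $n_j$.

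The engine at each step is the Mayer--Vietoris sequence of Section~\ref{sec3} applied to the amalgam $\SL_2(\z[1/p_{j+1}n_j]) \simeq \SL_2(\z[1/n_j]) \ast_{\Gamma_0(n_j, p_{j+1})} \SL_2(\z[1/n_j])$. By Theorem~\ref{H2-surj} the map $\alpha_2$ from $H_2(\Gamma_0(n_j,p_{j+1}),\z)$ onto each factor $H_2(\SL_2(\z[1/n_j]),\z)$ is surjective, so $\beta_2$ in the Mayer--Vietoris sequence has image landing where $H_2(\SL_2(\z[1/n_j]),\z)$ maps isomorphically; this produces, after bookkeeping, an exact sequence
\[
H_2(\SL_2(\z[1/n_j]),\z) \arr H_2(\SL_2(\z[1/p_{j+1}n_j]),\z) \arr C \arr 0,
\]
where $C$ is the kernel of $\alpha_1 \colon H_1(\Gamma_0(n_j,p_{j+1}),\z) \to H_1(\SL_2(\z[1/n_j]),\z)^{\oplus 2}$. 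The ordering hypothesis $r_{p_1} \le \dots \le r_{p_l}$ guarantees that $p_{j+1} > 3$ once $j+1 \ge 3$ (since $p=2,3$ give $r_p=1$ and only $p=5,7,13$ also give $r_p=1$, and the tie-break orders by size); more precisely the ordering ensures that at the stage where we adjoin $p_{j+1}$ we are in Case~(i) with $p_{j+1} > 3$, or else $n_j$ already contains a small prime making $p_{j+1} \nmid d$. One then reads off from diagram~(\ref{diag1}) (together with Theorem~\ref{G0}(i) and Theorem~\ref{H1}, which give $H_1(\SL_2(\z[1/n_j]),\z) \simeq H_1(\SL_2(\z[1/p_{j+1}n_j]),\z)$) that $C \simeq \F_{p_{j+1}}^\times$, and that the map $H_2(\SL_2(\z[1/p_{j+1}n_j]),\z) \to \F_{p_{j+1}}^\times$ is exactly $\delta_{p_{j+1}}$, which is surjective by Lemma~\ref{H2-to-Fp}.

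Finally I would splice the two exact sequences together. The inductive hypothesis gives a surjection $H_2(\SL_2(\z[1/p_1]),\z) \to \ker\big(H_2(\SL_2(\z[1/n_j]),\z) \to \bigoplus_{i=2}^j \F_{p_i}^\times\big)$, while the Mayer--Vietoris step gives the short exact row with cokernel $\F_{p_{j+1}}^\times$; a routine diagram chase (or the nine-lemma) then yields exactness of
\[
H_2(\SL_2(\z[1/p_1]),\z) \arr H_2(\SL_2(\z[1/n_{j+1}]),\z) \arr \bigoplus_{i=2}^{j+1} \F_{p_i}^\times \arr 1,
\]
completing the induction. The main obstacle is the bookkeeping in the inductive step: one must check that the image of $H_2(\SL_2(\z[1/n_j]),\z)$ in $H_2(\SL_2(\z[1/p_{j+1}n_j]),\z)$ is precisely the kernel of $\delta_{p_{j+1}}$ — i.e. that adjoining $p_{j+1}$ introduces no new relations among the previously constructed classes and no extra torsion — and that the splitting of the two exact sequences is compatible, which is where the precise identification of $C$ with $\F_{p_{j+1}}^\times$ via diagram~(\ref{diag1}) and the surjectivity statements of Theorem~\ref{H2-surj} and Lemma~\ref{H2-to-Fp} all have to be invoked in concert.
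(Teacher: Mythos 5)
Your argument is essentially the paper's: the paper packages your Mayer--Vietoris step as Proposition \ref{exact1} (proved from the Mayer--Vietoris sequence of Section \ref{sec3}, Theorem \ref{G0} and Theorem \ref{H2-surj} --- exactly the ingredients you cite) and then runs the same induction, splicing the one-step sequences via the Snake lemma applied to the diagram built from the maps $\delta_{p_2,\dots,p_l}$. The only point to tighten is the first step when $p_1=2$ and $p_2=3$, where one is in case (ii) rather than case (i), so the identification of the cokernel with $\F_3^\times$ comes from diagram (\ref{diag--1}) and Theorem \ref{G0}(ii) (with the condition on $d$) rather than from diagram (\ref{diag1}); this is exactly what Proposition \ref{exact1}(ii)--(iii) covers.
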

\begin{proof}
The proof is by induction on $l$. If $l=2$, then the claim follows from Proposition~\ref{exact1}.
So, let $l\geq 3$. Thus, clearly 
\[
p_l\nmid \gcd\{m^2-1: m\mid p_1p_2\cdots p_{l-1}\}
\]
and so, by Proposition~\ref{exact1}, we have the exact sequence
\[
\begin{array}{c}
H_2(\SL_2(\z[{1}/{(p_1\cdots p_{l-1})}]),\z) \arr H_2(\SL_2(\z[1/n]),\z) \arr \F_{p_l}^\times \arr 1.
\end{array}
\]
Now, the claim follows by induction and applying the Snake lemma to the commutative diagram with exact rows
\[
\begin{tikzcd}
& H_2(\SL_2(\z[{1}/{(p_1\cdots p_{l-1})}]),\z)\ar[r]\ar[d, "{\delta_{p_2, \dots,p_{l-1}}}"] 
& H_2(\SL_2(\z[1/n]),\z)\ar[r]\ar[d, "{\delta_{p_2, \dots,p_{l}}}"] & 
\F_{p_l}^\times\ar[d, equal]\ar[r] &1 \\
1 \ar[r] & \bigoplus_{i=2}^{l-1} \F_{p_i}^\times\ar[r]&\bigoplus_{i=2}^l \F_{p_i}^\times\ar[r]&
\F_{p_l}^\times \ar[r] & 1.
\end{tikzcd}
\]
\end{proof}

\begin{prp}\label{rq=1-inj}
Let $m$ be a square-free integer that is divisible by at least one of the primes $2$, $3$, $5$, $7$ or $13$.
If $m$ divides a square-free integer $n$, then the natural map
\[
\begin{array}{c}
H_2(\SL_2(\z[1/m]),\z) \arr H_2(\SL_2(\z[1/n]),\z)
\end{array}
\]
is injective. Moreover, the left homology group is a direct summand of the right homology group 
if $m$ is equal to one of the primes $2$, $3$ or $7$.
\end{prp}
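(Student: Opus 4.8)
The plan is to reduce to the prime case and then chase a chain of Mayer--Vietoris sequences. First I would reduce to the case $m = q$, a single prime from $\{2,3,5,7,13\}$: if $m$ is divisible by such a prime $q$, then $q \mid m \mid n$, and by functoriality the map $H_2(\SL_2(\z[1/m]),\z) \arr H_2(\SL_2(\z[1/n]),\z)$ factors through $H_2(\SL_2(\z[1/q]),\z)$ in two ways — once as $H_2(\SL_2(\z[1/q])) \arr H_2(\SL_2(\z[1/m])) \arr H_2(\SL_2(\z[1/n]))$ and once directly — so it suffices to prove injectivity of $H_2(\SL_2(\z[1/q]),\z) \arr H_2(\SL_2(\z[1/n]),\z)$ together with the fact that $H_2(\SL_2(\z[1/q]),\z) \arr H_2(\SL_2(\z[1/m]),\z)$ is injective; but the latter is the same statement with $n$ replaced by $m$, so a single clean induction (or a direct argument for the prime case combined with a splitting) handles everything. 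Concretely, I would first prove: \emph{for $q \in \{2,3,5,7,13\}$ and any square-free $n$ with $q \mid n$, the map $H_2(\SL_2(\z[1/q]),\z) \arr H_2(\SL_2(\z[1/n]),\z)$ is injective.}

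For this prime case, order the prime factors of $n$ as $n = p_1 \cdots p_l$ with $r_{p_1} \le \cdots \le r_{p_l}$, ties broken by $p_i < p_{i+1}$, exactly as in Theorem \ref{exact2}. Since $r_q = 1$ is the minimum possible value of $r_p$, and the tie-break orders $q=2,3$ before the larger primes with $r_p = 1$, we get $p_1 = q$ precisely when $q$ is the ``first'' prime in this ordering. Then Theorem \ref{exact2} gives the exact sequence $H_2(\SL_2(\z[1/q]),\z) \arr H_2(\SL_2(\z[1/n]),\z) \arr \bigoplus_{i=2}^l \F_{p_i}^\times \arr 1$, so injectivity is equivalent to showing the left map is mono, i.e. that the kernel of $H_2(\SL_2(\z[1/n]),\z) \arr \bigoplus_{i \ge 2}\F_{p_i}^\times$ has order exactly $|H_2(\SL_2(\z[1/q]),\z)|$ as an image. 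To get this I would compare with Hutchinson's Theorem \ref{H26}: when $6 \mid n$, Hutchinson gives the \emph{split} exact sequence with $H_2(\SL_2(\z[1/m]),\z)$ as a genuine direct summand for any $m$ with $6 \mid m \mid n$, which immediately yields injectivity (and splitting) for $q = 2$ and $q = 3$, at least once we also bring in $2$ or $3$ as a factor. The subtlety is that $n$ need not be divisible by $6$. For $q = 5, 7, 13$ I would instead argue via a dimension/rank count combined with the torsion analysis: by Theorem \ref{AN}, $H_2(\SL_2(\z[1/q]),\z) \simeq \z$, so injectivity of $\z \arr H_2(\SL_2(\z[1/n]),\z)$ just says the generator is not torsion; this follows because the composite $\z = H_2(\SL_2(\z[1/q]),\z) \arr H_2(\SL_2(\z[1/n]),\z) \arr H_2(\SL(\q),\z) \simeq K_2(\q)$ lands on a class detected by a tame symbol $\tau_q$ (using the Hutchinson/Matsumoto description recalled just before Theorem \ref{exact2}), and the image of a nonzero multiple of $\{-1, q\}$ or the relevant symbol under $\tau_q$ is nontrivial in $\F_q^\times$ or in a $\z$-summand of $K_2(\q)$; more cleanly, $H_2(\SL(\z[1/q]),\z) \arr H_2(\SL(\z[1/n]),\z)$ is split injective because $\z[1/q] \to \z[1/n]$ splits off a summand of $K_2$ via the localization sequence $K_2(\q) \to \bigoplus_p \F_p^\times$, and $H_2(\SL_2) \to H_2(\SL)$ is an isomorphism here since $\SL_2(\z[1/q])$ already surjects onto $H_2(\SL(\z[1/q]),\z)$ for these small primes (this is implicit in Adem--Naffah).

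For the ``moreover'' clause, I would produce an explicit splitting when $m = q \in \{2, 3, 7\}$. In these cases $H_2(\SL_2(\z[1/q]),\z)$ is $\z$ (for $q = 2,3$) or $\z \oplus \z/3$ (for $q = 7$) by Theorem \ref{AN}, and the torsion-free part is split off via the tame-symbol / $K_2$ retraction just described, while for $q = 7$ the $\z/3$ factor is the $3$-torsion of $K_2(\z[1/7])$, which injects compatibly into the $3$-torsion of $K_2(\z[1/n])$ and is again split by the localization sequence for $K_2$. The key point making $2, 3, 7$ special (as opposed to $5, 13$) is that their $H_2$ has \emph{no $2$-torsion of order $4$ and no mixing with the $\pmod 4$ congruence condition} visible in Theorem A(iii),(v); concretely, for $q = 5$ or $13$ the possible appearance of a $\z/4$ instead of $\z/2 \oplus (\text{stuff})$ in $H_2(\SL_2(\z[1/n]),\z)$ obstructs $H_2(\SL_2(\z[1/q]),\z)$ from being a direct summand, which is exactly why the proposition excludes them. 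I expect the main obstacle to be the bookkeeping in the non-$6$-divisible cases: verifying carefully that $p_1 = q$ under the chosen ordering and that the induced map on $H_2(\SL)$ (as opposed to $H_2(\SL_2)$) is genuinely split injective — this requires knowing the stabilization map $H_2(\SL_2(\z[1/n]),\z) \to H_2(\SL(\z[1/n]),\z) \simeq K_2(\q[1/n]) = K_2(\q)$ is well enough understood, which we can extract from the surjectivity results of Section \ref{sec5} together with the Matsumoto presentation, but it needs to be stated precisely.
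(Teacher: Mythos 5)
Your proposal has two genuine gaps, one logical and one substantive. The logical one is in the reduction to the prime case: from the injectivity of $H_2(\SL_2(\z[1/q]),\z) \arr H_2(\SL_2(\z[1/m]),\z)$ and of $H_2(\SL_2(\z[1/q]),\z) \arr H_2(\SL_2(\z[1/n]),\z)$ you cannot conclude that $H_2(\SL_2(\z[1/m]),\z) \arr H_2(\SL_2(\z[1/n]),\z)$ is injective; a map $B\arr C$ need not be injective just because some $A$ injects into both $B$ and $C$ compatibly. What actually closes this step is the Snake lemma applied to the two four-term exact sequences of Theorem \ref{exact2} over the common subgroup $H_2(\SL_2(\z[1/q]),\z)$ (with the left-hand zeros supplied by the prime case), using that the induced map $\bigoplus_{i=1}^{h}\F_{p_i}^\times \arr \bigoplus_{i=1}^{l}\F_{p_i}^\times$ on the cokernels is injective; this is how the paper argues.

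The substantive gap is in the prime case itself. For $q=5,7,13$ you assert $H_2(\SL_2(\z[1/q]),\z)\simeq \z$, but Theorem \ref{AN} gives $\z\oplus\z/((q-1)/2)$, and the torsion summand is precisely where the difficulty lies (for $q=5,13$ it can a priori be absorbed into a $\z/4$ or $\z/12$ in the larger group, which is why those primes are excluded from the splitting statement). Your proposed detection mechanism cannot repair this: $K_2(\q)$ is a torsion group, so the composite through $H_2(\SL(\q),\z)\simeq K_2(\q)$ and the tame symbols cannot detect the rank-one free part (indeed the paper notes that $\delta_q$ kills the free part), and the claimed isomorphism $H_2(\SL_2)\simeq H_2(\SL)$ over these rings is unsupported. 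The paper's route is different and unavoidable here: it first proves injectivity of $H_2(\SL_2(\z[1/q]),\z)\arr H_2(\SL_2(\z[1/6q]),\z)$ by an order count, pitting the exact sequences of Proposition \ref{exact1} against Hutchinson's split sequence (Theorem \ref{H26}) and the Bui--Ellis computation $H_2(\SL_2(\z[1/6]),\z)\simeq\z\oplus\z/2$, and then composes with the split injection $H_2(\SL_2(\z[1/6q]),\z)\harr H_2(\SL_2(\z[1/6n]),\z)$, which factors through $H_2(\SL_2(\z[1/n]),\z)$, to get injectivity out of $\z[1/q]$. Your instinct to exploit the $6$-divisible case was right, but it must be used as the target of a factorization, not merely as motivation.
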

\begin{proof}
Firstly, we assume that $m$ is a prime. Thus, $m$ must be one of the primes $2$, $3$, $5$, $7$ or $13$. Let us
denote $m$ by $q$. In this case, we start our argument by showing that the natural map 
\[
\begin{array}{c}
H_2(\SL_2(\z[1/q]),\z) \arr H_2(\SL_2(\z[{1}/{6q}]),\z)
\end{array}
\]
is injective. If $q=2,3$, then $\z[{1}/{6q}]=\z[{1}/{6}]$. By Proposition \ref{exact1},
we have the exact sequences
\[
\begin{array}{c}
H_2(\SL_2(\z[{1}/{2}]),\z) \arr H_2(\SL_2(\z[{1}/{6}]),\z) \arr \F_3^\times \arr 1,
\end{array}
\]
\[
\begin{array}{c}
H_2(\SL_2(\z[1/3]),\z) \arr H_2(\SL_2(\z[1/6]),\z) \arr \F_2 \arr 0.
\end{array}
\]
By the calculation of Bui-Ellis in \cite[Table 1]{ae2014}, we know that 
\[
\begin{array}{c}
H_2(\SL_2(\z[1/6]),\z)\simeq \z \oplus \z/2.
\end{array}
\]
Since $H_2(\SL_2(\z[1/2]),\z)\simeq \z$ and $H_2(\SL_2(\z[1/3]),\z)\simeq \z$
(see Theorem \ref{AN}), the left maps in the above exact sequences are injective. In fact, we have 
\[
\begin{array}{c}
H_2(\SL_2(\z[1/6]),\z)\simeq H_2(\SL_2(\z[1/2]),\z) \oplus \F_3^\times,
\end{array}
\]
\[
\begin{array}{c}
H_2(\SL_2(\z[1/6]),\z)\simeq H_2(\SL_2(\z[1/3]),\z) \oplus \F_2.
\end{array}
\]
This proves the claim for $q=2,3$. So let $q\in \{5,7,13\}$. Observe that for these primes, by Theorem \ref{AN}, 
we have
\[
H_2(\SL_2(\z[1/q]),\z)\simeq \z \oplus \z/((q-1)/2).
\]
Moreover, by Theorem \ref{H26} (of Hutchinson),
\[
\begin{array}{c}
H_2(\SL_2(\z[1/6q]),\z)\simeq \z \oplus \F_{3}^\times\oplus \F_{q}^\times\simeq \z \oplus \z/2 \oplus \z/(q-1).
\end{array}
\]
More precisely, we have the split exact sequence
\[
\begin{array}{c}
0 \arr H_2(\SL_2(\z[1/2]),\z) \arr H_2(\SL_2(\z[1/6q]),\z) \arr 
\F_3^\times \oplus \F_{q}^\times \arr 1.
\end{array}
\]
On the other hand, by Proposition \ref{exact1}, we have the exact sequences
\[
\begin{array}{c}
H_2(\SL_2(\z[1/3q]),\z) \arr H_2(\SL_2(\z[1/6q]),\z) \arr \F_2 \arr 0,
\end{array}
\]
\[
\begin{array}{c}
H_2(\SL_2(\z[1/q]),\z) \arr H_2(\SL_2(\z[1/3q]),\z) \arr \F_{3}^\times\arr 0.
\end{array}
\]
Combining these two (as in Theorem \ref{exact2}), we obtain the exact sequence
\[
\begin{array}{c}
H_2(\SL_2(\z[1/q]),\z) \arr H_2(\SL_2(\z[1/6q]),\z) \arr \F_2\oplus\F_{3}^\times\arr 0.
\end{array}
\]
Now, using the above exact sequence of Hutchinson, the sequence
\[
\begin{array}{c}
0\arr H_2(\SL_2(\z[1/q]),\z)\arr H_2(\SL_2(\z[1/6q]),\z)\arr\F_2\oplus\F_{3}^\times\arr 0
\end{array}
\]
is exact and it splits only if $q=7$. Finally, the general claim (for the case $m=q$ a prime) follows from 
the commutative diagram
\[
\begin{tikzcd}
 H_2(\SL_2(\z[1/q]),\z) \ar[r]\ar[d, hook] & H_2(\SL_2(\z[1/n]),\z)\ar[d]  \\
 H_2(\SL_2(\z[1/6q]),\z)\ar[r, hook] & H_2(\SL_2(\z[1/6n]),\z)
\end{tikzcd}
\]
and the fact that $H_2(\SL_2(\z[{1}/{6q}]),\z)$ is a direct summand of $H_2(\SL_2(\z[1/6n]),\z)$
(this follows from Theorem \ref{H26}).

Now, let $q$ be the smallest prime among $2,3,5,7,13$ that divides $m$. Let $m=qp_1\cdots p_h$
and $n=mp_{h+1}\cdots p_l=qp_1\cdots p_l$. By Theorem \ref{exact2} and what we have just proved, we 
get the commutative diagram with exact rows
\[
\begin{tikzcd}
0 \ar[r] & H_2(\SL_2(\z[1/q]),\z)\ar[r]\ar[d, equal] & H_2(\SL_2(\z[1/m]),\z)\ar[r]\ar[d] & 
\bigoplus_{i=1}^h\F_{p_i}^\times\ar[d, hook]\ar[r] & 1 \\
0 \ar[r] & H_2(\SL_2(\z[1/q]),\z)\ar[r] & H_2(\SL_2(\z[1/n]),\z) \ar[r] &\bigoplus_{i=1}^l 
\F_{p_i}^\times \ar[r] & 1.
\end{tikzcd}
\]
The general injectivity claim of the theorem follows from the Snake lemma applied to the above diagram.
\end{proof}


The next theorem, Theorem A from the introduction, represents a significant generalization of Hutchinson's 
Theorem~\ref{H26}.

\begin{thm}\label{rp=1}
Let $n$ be a square-free positive integer and let 
$\spcd(n,2730)$ be the smallest common prime divisor of $n$ and $2730=2\cdot 3 \cdot 5 \cdot 7 \cdot 13$.
\par {\rm (i)} If $\spcd(n,2730)=2$, i.e. $n$ is even, then 
\[
H_2(\SL_2(\z[1/n]),\z) \simeq \z\oplus  \bigoplus_{p\mid (n/2)} \F_{p}^\times.
\]
\par {\rm (ii)} If $\spcd(n,2730)=3$, then 
\[
H_2(\SL_2(\z[1/n]),\z) \simeq \z\oplus  \bigoplus_{p\mid (n/3)} \F_{p}^\times.
\]
\par {\rm (iii)} If $\spcd(n,2730)=5$, then 
\[
H_2(\SL_2(\z[1/n]),\z) \simeq \z \oplus
\begin{cases}
\z/2  \oplus \displaystyle\bigoplus_{p\mid (n/5)}\F_{p}^\times & \text{if $p\equiv 1\!\!\!\! \pmod {4}$ for all $p\mid (n/5)$},\\
\z/4  \oplus \z/((q-1)/2)  \oplus \!\!\!\!\displaystyle\bigoplus_{p\mid (n/5q)}\!\!\! \F_{p}^\times & \text{if $q\equiv 3 \!\!\!\! \pmod {4}$ for some $q\mid (n/5)$.}
\end{cases}
\]
\par {\rm (iv)} If $\spcd(n,2730)=7$, then 
\[
H_2(\SL_2(\z[1/n]),\z) \simeq \z\oplus \z/3\oplus \bigoplus_{p\mid (n/7)} \F_{p}^\times.
\]
\par {\rm (v)} If $\spcd(n,2730)=13$, then 
\[
\begin{array}{c}
H_2(\SL_2(\z[1/n]),\z)   
\end{array}
\!\!\simeq \z \oplus
\begin{cases}
\z/6  \oplus \!\!\!\!\displaystyle\bigoplus_{p\mid (n/13)}\F_{p}^\times & \text{if $p\equiv 1\!\!\!\! \pmod {4}$ for all $p\mid (n/13)$,}\\
\z/12  \oplus \z/((q-1)/2)  \oplus \!\!\!\!\displaystyle\bigoplus_{p\mid (n/13q)}\!\!\!\! \F_{p}^\times & \text{if $q\equiv 3 \!\!\!\! \pmod {4}$ for some $q\mid (n/13)$.}
\end{cases}
\]
\end{thm}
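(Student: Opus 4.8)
The plan is to derive Theorem~\ref{rp=1} from the structural results already in hand, proceeding case by case on $q:=\spcd(n,2730)$. Write $n=q\cdot p_1\cdots p_h$ where the $p_i$ are the remaining prime divisors of $n$; by the definition of $\spcd$, none of the $p_i$ lies below $q$ in the list $2,3,5,7,13$, but more importantly none of them equals $2,3,5,7$ or $13$ either (so each $p_i$ has $r_{p_i}\geq 2$, in particular $p_i\geq 11$). The backbone of every case is the same: Theorem~\ref{exact2} gives an exact sequence
\[
H_2(\SL_2(\z[1/q]),\z) \arr H_2(\SL_2(\z[1/n]),\z) \arr \bigoplus_{i=1}^h \F_{p_i}^\times \arr 1,
\]
where I need to check that $q$ really is the prime with smallest $r_{p}$ among the prime divisors of $n$ (true since $r_q=1$ when $q\in\{2,3,5,7,13\}$ and all others have $r_{p_i}\geq 2$); Proposition~\ref{rq=1-inj} upgrades the left-hand map to an injection. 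So the sequence becomes short exact:
\[
0 \arr H_2(\SL_2(\z[1/q]),\z) \arr H_2(\SL_2(\z[1/n]),\z) \arr \bigoplus_{i=1}^h \F_{p_i}^\times \arr 1.
\]
Now I plug in the value of $H_2(\SL_2(\z[1/q]),\z)$ from Theorem~\ref{AN}: this is $\z$ for $q=2,3$, $\z\oplus\z/2$ for $q=5$, $\z\oplus\z/3$ for $q=7$, and $\z\oplus\z/6$ for $q=13$. Each $\F_{p_i}^\times\simeq\z/(p_i-1)$, and since $p_i\geq 11$ the group $\bigoplus\F_{p_i}^\times$ has a large free-abelian-looking torsion that, combined with $\z$, needs splitting.

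For items (i), (ii), (iv) the argument finishes quickly. In these cases $H_2(\SL_2(\z[1/q]),\z)$ is $\z$, $\z$, $\z\oplus\z/3$ respectively, and I claim the short exact sequence splits. The cleanest route: enlarge $n$ to $6n$ (or to $qn$ times a suitable auxiliary prime), invoke Hutchinson's Theorem~\ref{H26} to identify $H_2(\SL_2(\z[1/6n]),\z)\simeq \z\oplus\bigoplus_{p\mid 6n}\F_p^\times$ together with its \emph{split} structure over $H_2(\SL_2(\z[1/6]),\z)$ or $H_2(\SL_2(\z[1/42]),\z)$, and then use the last commutative square in the proof of Proposition~\ref{rq=1-inj} (the one showing $H_2(\SL_2(\z[1/6q]),\z)$ is a direct summand of $H_2(\SL_2(\z[1/6n]),\z)$, via Theorem~\ref{H26}) to pull the splitting back. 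Concretely: $H_2(\SL_2(\z[1/n]),\z)$ injects as a direct summand into $H_2(\SL_2(\z[1/6n]),\z)$, which is $\z\oplus\bigoplus_{p\mid 6n}\F_p^\times$; intersecting the known torsion decomposition with the image identifies $H_2(\SL_2(\z[1/n]),\z)$ with $\z\oplus\bigoplus_{p\mid (n/q)}\F_p^\times$ (plus the $\z/3$ extra summand when $q=7$, which survives because $\F_7^\times\simeq\z/6$ contributes a $\z/3$ that is not cancelled). For $q=2,3$ the $\z/3$ or $\z/4$ from $H_1$ considerations does not appear in $H_2$, matching the stated formulas.

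Items (iii) and (v) are where the real work lies, and I expect the case split ``$q\equiv 3\pmod 4$ for some $p_i$'' versus ``all $p_i\equiv 1\pmod 4$'' to be the main obstacle. The point is the extension
\[
0 \arr \z\oplus\z/2 \arr H_2(\SL_2(\z[1/n]),\z) \arr \bigoplus_{i=1}^h\z/(p_i-1) \arr 1
\]
(for $q=5$; replace $\z/2$ by $\z/6$ for $q=13$) need not split as written, because the $\z/2$ can merge with a $\z/(p_i-1)$: when some $p_i\equiv 3\pmod 4$, $(p_i-1)/2$ is odd, and the $2$-part of $\z/2\oplus\z/(p_i-1)$ is $\z/2\oplus\z/2$ only if there is no extension, whereas in fact one gets $\z/4\oplus\z/((p_i-1)/2)$. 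To pin this down I would again compare with the $6n$-localization: Theorem~\ref{H26} tells me $H_2(\SL_2(\z[1/6n]),\z)\simeq\z\oplus\bigoplus_{p\mid 6n}\z/(p-1)$ \emph{as abelian groups, with $\F_p^\times$ summands}, and Proposition~\ref{rq=1-inj} exhibits $H_2(\SL_2(\z[1/n]),\z)$ as a direct summand of it. So $H_2(\SL_2(\z[1/n]),\z)$ is a subgroup-and-summand of $\z\oplus\bigoplus_{p\mid (n/q)}\z/(p-1)\oplus\z/2\oplus\z/(q-1)$ (the last from $p=3$ and $p=q=5$ or $13$); counting orders via the short exact sequence forces it to be exactly $\z\oplus\z/2\oplus\bigoplus\z/(p_i-1)$ when every $p_i-1\equiv 0\pmod 4$, and $\z\oplus\z/4\oplus\z/((q-1)/2)\oplus\bigoplus_{p_i\neq q}\z/(p_i-1)$ once a single $p_i=q\equiv 3\pmod 4$ appears (the invariant-factor normal form absorbs all further $\z/(p_j-1)$ into that $\z/4$ and the large factor, since there is at most one copy of $\z/2$ available to be ``promoted''). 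The delicate bookkeeping is checking that the $\z/2$ available is genuinely consumed — i.e.\ that the extension class is nontrivial precisely in the $q\equiv 3$ subcase — which I would verify by tracking the tame-symbol description of $\delta_p$ from Hutchinson's proposition (the composite through $K_2(\q)$) on an explicit Steinberg symbol, or alternatively by matching Bui--Ellis's tabulated values in \cite{ae2014} for small $n$ to rule out the split extension. Once the $2$-primary part is settled the odd part is immediate, and assembling the pieces gives the stated isomorphisms.
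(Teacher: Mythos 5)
Your skeleton coincides with the paper's (Theorem \ref{exact2} plus the injectivity of Proposition \ref{rq=1-inj} to get a short exact sequence, the direct--summand clause of that proposition to split it when $q\in\{2,3,7\}$, and a comparison with the $6n$--localization via Theorem \ref{H26} when $q\in\{5,13\}$), but several steps as written do not go through. First, your opening claim that no prime divisor of $n/q$ lies in $\{2,3,5,7,13\}$ is false: if $\spcd(n,2730)=5$ then $7$ or $13$ may divide $n/5$ (e.g.\ $n=35$), so "all other $r_{p_i}\geq 2$" is wrong. The use of Theorem \ref{exact2} survives only because its ordering convention breaks ties $r_{p_i}=r_{p_{i+1}}$ by size of the prime, which still places $q$ first; that is the justification you should give. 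Second, you repeatedly invoke that $H_2(\SL_2(\z[1/n]),\z)$ is a \emph{direct summand} of $H_2(\SL_2(\z[1/6n]),\z)$; Proposition \ref{rq=1-inj} yields a summand only when the smaller modulus is one of $2,3,7$, and for general $n$ you only have injectivity. Injectivity does suffice, but then "counting orders" cannot distinguish split from non-split extensions (they have the same order): the paper's actual argument in the subcase where every $p\mid(n/q)$ is $\equiv 1\pmod 4$ is that a non-split extension would create an element of order $2^{s_p+1}$, which cannot inject into $\z\oplus\bigoplus_{p\mid 6n}\F_p^\times$. You need that element-order argument, not an order count.

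The most serious gap is the non-splitting in the subcase where some $q'\mid(n/5)$ (resp.\ $n/13$) is $\equiv 3\pmod 4$. Checking Bui--Ellis tables for small $n$ cannot rule out the split extension for all $n$, and "tracking the tame symbol on an explicit Steinberg symbol" is left entirely to the reader. The step that closes this, and that you should cite rather than re-derive, is Lemma \ref{H2-to-Fp}: $\delta_5\colon H_2(\SL_2(\z[1/n]),\z)\two\F_5^\times\simeq\z/4$ is surjective and annihilates the free part, so the torsion subgroup must contain an element of order $4$, whereas a split extension would have elementary abelian $2$-torsion. Likewise for $\delta_{13}$ onto $\z/4\oplus\z/3$ in case (v); and there the odd part is \emph{not} immediate --- you must also rule out the $\z/3\subseteq\z/6$ merging with the $3$-part of some $\z/(p-1)$, which again needs the injection into $H_2(\SL_2(\z[1/78\,p_2]),\z)$ and an element-order comparison. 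Finally, the passage from $l=2$ to general $l$ in cases (iii) and (v) requires isolating one new prime at a time (the paper quotients by the image of the already-split part $\eta$ before repeating the $l=2$ argument); "assembling the pieces" does not by itself control where the single available $\z/2$ gets promoted when several primes $\equiv 3\pmod 4$ divide $n$.
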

\begin{proof}
Let $n=p_1\cdots p_l$ be the prime decomposition of $n$ such that $r_{p_1}\leq \cdots \leq r_{p_l}$. 
Moreover, if $r_{p_i}=r_{p_{i+1}}$, for some $i$, we assume that $p_i < p_{i+1}$.
\par (i) Since $n$ is even, we put $p_1=2$. 
By Theorem \ref{exact2} and Proposition \ref{rq=1-inj}, we get the exact sequence
\begin{align}\label{sss}
\begin{array}{c}
0 \arr H_2(\SL_2(\z[1/2]),\z) \arr H_2(\SL_2(\z[1/n]),\z) 
\arr \bigoplus_{i=2}^l \F_{p_i}^\times \arr 1.
\end{array}
\end{align}
Again, by Proposition \ref{rq=1-inj}, the exact sequence (\ref{sss}) splits. This, together with the isomorphism 
$H_2(\SL_2(\z[1/2]),\z)\simeq \z$ (Theorem \ref{AN}), imply the first item.
\par (ii) Since $\spcd(n,2730)=3$, we have $p_1=3$. Now, a similar argument as in (i) proves
the second item.
\par (iv) If $\spcd(n,2730)=7$, then we have $p_1=7$. 
Again, by a similar argument as in (i), we can prove the fourth item.

\par (iii) Now, let $\spcd(n,2730)=5$. 
Then $p_1=5$. If $p$ is an odd prime which divides $n$, let $p-1 = 2^{s_p}m_p$, where 
$s_p \geq 1$ and $m_p$ is odd. Note that $p\equiv 1\!\! \pmod {4}$ if and only if $s_p>1$
and  $p\equiv 3\!\! \pmod {4}$ if and only if $s_p=1$.

First, let us assume that $l=2$, i.e. $n=5p_2$. By Theorem \ref{exact2} and 
Proposition \ref{rq=1-inj}, we get the exact sequence
\[
\begin{array}{c}
0 \arr H_2(\SL_2(\z[1/5]),\z) \arr H_2(\SL_2(\z[1/5p_2]),\z) \arr \F_{p_2}^\times \arr 1.
\end{array}
\]
Consider the commutative diagram with exact rows
\begin{equation}\label{5p}
\begin{tikzcd}
0 \ar[r] & H_2(\SL_2(\z[1/5]),\z)\ar[r, "i_\ast"]\ar[d, hook] 
& H_2(\SL_2(\z[1/5p_2]),\z)\ar[r]\ar[d] & \F_{p_2}^\times\ar[d, equal]\ar[r] & 1 \\
0 \ar[r] & H_2(\SL_2(\z[1/30]),\z)\ar[r] & H_2(\SL_2(\z[1/30p_2,\z) \ar[r] 
&\F_{p_2}^\times \ar[r] & 1.
\end{tikzcd}
\end{equation}
By Theorem \ref{H26}, the lower exact sequence splits. Note that, by Theorem \ref{AN}, we have 
\[
\begin{array}{c}
H_2(\SL_2(\z[1/5]),\z)\simeq \z\oplus \z/2.
\end{array}
\]
It follows from this that the free part of $H_2(\SL_2(\z[1/5p_2]),\z)$ splits naturally. 
Observe that in the commutative diagram
\[
\begin{tikzcd}
\z\oplus \z/2 \ar[r] \ar[d, "\simeq"] & \z \oplus \z/2 \oplus \z/4\ar[d, "\simeq"]\\
H_2(\SL_2(\z[1/5]),\z) \ar[r] & H_2(\SL_2(\z[1/30]),\z)
\end{tikzcd}
\]
the upper map is given by $(a,\overline{b})\mapsto (a, 0, 2\overline{b})$. 


Let $s_{p_2}>1$. Note that $\F_{p_2}^\times\simeq \z/(p_2-1)\simeq \z/2^{s_{p_2}}\oplus \z/m_{p_2}$.
If the upper exact sequence of the diagram (\ref{5p}) does not split, then 
\[
\begin{array}{c}
H_2(\SL_2(\z[1/5p_2]),\z)\simeq \z \oplus \z/2^{s_{p_2}+1}\oplus \z/m_{p_2}
\end{array}
\]
(see \cite[Theorem 3.4.3, 3.3.2]{wei1994}).
It follows from this that $H_2(\SL_2(\z[1/5p_2]),\z)$ has an element of order $s_{p_2}+1$. Moreover,
$H_2(\SL_2(\z[1/5p_2]),\z)$ injects into $H_2(\SL_2(\z[1/30p_2]),\z)$. But this later group 
does not has any element of order $s_{p_2}+1$. This is a contradiction and therefore the upper row of  
(\ref{5p}) must split. Therefore,
\[
\begin{array}{c}
H_2(\SL_2(\z[1/5p_2]),\z)\simeq \z \oplus \z/2 \oplus \F_{p_2}^\times.
\end{array}
\]

Now, let $s_{p_2}=1$. If the upper exact sequence of the diagram (\ref{5p}) splits, then
\[
\begin{array}{c}
H_2(\SL_2(\z[1/5p_2]),\z)\simeq \z \oplus \z/2 \oplus \F_{p_2}^\times \simeq 
\z \oplus \z/2 \oplus \z/2\oplus \z/m_{p_2}.
\end{array}
\]
But this contradicts the surjectivity of the map
\[
\begin{array}{c}
\delta_5: H_2(\SL_2(\z[1/5p_2]),\z) \two \F_5^\times\simeq \z/4
\end{array}
\]
(see Lemma \ref{H2-to-Fp}). Observe that $\delta_5$ maps the free part of 
$H_2(\SL_2(\z[1/5p_2]),\z)$ to zero. Therefore the first row of the diagram  
(\ref{5p}) does not split and hence,
\[
\begin{array}{c}
H_2(\SL_2(\z[1/5p_2]),\z) \simeq \z \oplus \z/4 \oplus \z/m_{p_2}\simeq \z\oplus \z/4 \oplus \z/((p_2-1)/2)
\end{array}
\]
(see Lemma \ref{H2-to-Fp}). This proves the claim of item (iii) for $l=2$. 

Now, let $l>2$ and consider the commutative diagram with exact rows
\[
\begin{tikzcd}
0\! \ar[r]\! & \!H_2(\SL_2(\z[1/5]),\z)\!\ar[r]\ar[d, equal] &\! H_2(\SL_2(\z[{1}/{(5p_2\cdots p_{l-1})}]),\z)
\!\ar[r]\ar[d] &\! \bigoplus_{i=2}^{l-1} \F_{p_i}^\times\ar[d, hook]\!\ar[r] &\! 1 \\
0\! \ar[r] & \! H_2(\SL_2(\z[1/5]),\z)\!\ar[r] &\! H_2(\SL_2(\z[1/n]),\z) \!\ar[r] & 
\!\bigoplus_{i=2}^l \F_{p_i}^\times \!\ar[r] & \!1.
\end{tikzcd}
\]
If $s_{p_j}>1$, for all $2\leq j\leq l$, then, by induction, the first row splits. Let $\eta$ be the composition
\[
\begin{array}{c}
\bigoplus_{i=2}^{l-1} \F_{p_i}^\times \arr H_2(\SL_2(\z[{1}/{(5p_2\cdots p_{l-1})}]),\z) \arr 
H_2(\SL_2(\z[1/n]),\z)
\end{array}
\]
and consider the exact sequence
\[
\begin{array}{c}
0 \arr H_2(\SL_2(\z[1/5]),\z)\arr H_2(\SL_2(\z[{1}/{(5p_2\cdots p_l)}]),\z)/\im(\eta) \arr
\F_{p_l}^\times \arr  1.
\end{array}
\]
Then, as in case $l=2$, one can show that the above exact sequence splits. Therefore
\[
\begin{array}{c}
H_2(\SL_2(\z[1/n]),\z) \simeq \z \oplus \z/2 \oplus \bigoplus_{i=2}^{l-1} \F_{p_i}^\times.
\end{array}
\]
Now, let  $s_{p_j}=1$, for some $j$. We may assume that $s_{p_2}=1$. Again, by induction 
on the first row of the above diagram, we have the decomposition 

\begin{align*}
H_2(\SL_2(\z[{1}/{(5p_2\cdots p_{l-1})}]),\z)  
\begin{array}{c}
\simeq \z \oplus \z/4\oplus \z/((p_2-1)/2) \oplus \bigoplus_{i=3}^{l-1} \F_{p_i}^\times.
\end{array}
\end{align*}
By Theorem \ref{exact2} and Proposition \ref{rq=1-inj}, we have the exact sequence
\[
\begin{array}{c}
0 \arr H_2(\SL_2(\z[{1}/{(5p_2\cdots p_{l-1})}]),\z)\arr H_2(\SL_2(\z[1/n]),\z) \arr
\F_{p_l}^\times \arr  1.
\end{array}
\]
Let $\theta$ be the composition
\[
\begin{array}{c}
\bigoplus_{i=3}^{l-1} \F_{p_i}^\times \arr H_2(\SL_2(\z[{1}/{(5p_2\cdots p_{l-1})}]),\z) \arr 
H_2(\SL_2(\z[1/n]),\z).
\end{array}
\]
From this and the above exact sequence, we obtain the exact sequence
\[
\begin{array}{c}
0\arr\z\oplus \z/4 \oplus \z/((p_2-1)/2)\arr H_2(\SL_2(\z[1/n]),\z)/\im(\theta)\arr\F_{p_l}^\times\arr  1.
\end{array}
\]
Note that $H_2(\SL_2(\z[1/n]),\z)/\im(\theta)$ injects into
\[
\begin{array}{c}
H_2(\SL_2(\z[{1}/{(30p_2\cdots p_l)}]),\z)/\im(\theta)
\simeq \z \oplus \z/2 \oplus \z/4 \oplus \F_{p_2}^\times \oplus \F_{p_l}^\times.
\end{array}
\]
But this is only possible if the above exact sequence splits, i.e.
\[
\begin{array}{c}
H_2(\SL_2(\z[1/n]),\z) \simeq \z \oplus \z/4 \oplus \z/((p_2-1)/2) \oplus \bigoplus_{i=3}^{l} \F_{p_i}^\times.
\end{array}
\]
This completes the proof of the item (iii) of the theorem. 

\par (v) Let $\spcd(n,2730)=13$. 
The proof of this part is similar to the proof of item~(iii), but is more involved.
Take $p_1=13$. First, let us assume that $l=2$, i.e. $n=13p_2$. By 
Theorem \ref{exact2} and Proposition \ref{rq=1-inj}, we get the exact sequence
\[
\begin{array}{c}
0 \arr H_2(\SL_2(\z[1/13]),\z) \arr H_2(\SL_2(\z[1/13p_2]),\z) \arr \F_{p_2}^\times \arr 1.
\end{array}
\]
Consider the commutative diagram with exact rows
\begin{equation}\label{13p}
\begin{tikzcd}
0 \ar[r] & H_2(\SL_2(\z[1/13]),\z)\ar[r, "i_\ast"]\ar[d, hook] 
& H_2(\SL_2(\z[1/13p_2]),\z)\ar[r]\ar[d] & \F_{p_2}^\times\ar[d, equal]\ar[r] & 1 \\
0 \ar[r] & H_2(\SL_2(\z[1/78]),\z)\ar[r] & H_2(\SL_2(\z[1/78p_2,\z) \ar[r] 
&\F_{p_2}^\times \ar[r] & 1.
\end{tikzcd}
\end{equation}
Note that $78=6 \cdot 13$.
By Theorem \ref{H26}, the lower exact sequence splits. Note that, by Theorem \ref{AN}, we have 
\[
\begin{array}{c}
H_2(\SL_2(\z[1/13]),\z)\simeq \z\oplus \z/6\simeq \z\oplus \z/2\oplus \z/3.
\end{array}
\]
Similar to the case $p_1=5$, the free part of $H_2(\SL_2(\z[1/13p_2]),\z)$ splits naturally
and in the commutative diagram
\[
\begin{tikzcd}
\z\oplus \z/6 \ar[r] \ar[d, "\simeq"] & \z \oplus \z/2 \oplus \z/12\ar[d, "\simeq"]\\
H_2(\SL_2(\z[1/13]),\z) \ar[r] & H_2(\SL_2(\z[1/78]),\z)
\end{tikzcd}
\]
the upper map is given by $(a,\overline{b})\mapsto (a, 0, 2\overline{b})$. Let 
\[
p_2-1=2^{s_{p_2}}m_{p_2}=2^{s_{p_2}}3^tm_{p_2}',
\]
where $m_{p_2}=3^tm_{p_2}'$, $t\geq 0$ and $3\nmid m_{p_2}'$. Then
\[
\F_{p_2}^\times\simeq \z/(p_2-1)\simeq \z/2^{s_{p_2}}\oplus \z/3^t\oplus\z/m_{p_2}'.
\]

Let $s_{p_2}>1$.
If the upper exact sequence of the diagram (\ref{13p}) does not split, then $H_2(\SL_2(\z[1/13p_2]),\z)$ 
has either a copy of $\z/2^{s_{p_2}+1}$ or a copy of $\z/3^{t+1}$, in case $t\geq 1$, as direct summand. But
$H_2(\SL_2(\z[1/13p_2]),\z)$ injects into $H_2(\SL_2(\z[1/78p_2]),\z)$ and this later group 
does not has any element of order $s_{p_2}+1$ or $t+1$, in case $t\geq 1$, since
\begin{align*}
H_2(\SL_2(\z[1/78p_2]),\z) &\simeq \z \oplus \F_3^\times \oplus \F_{13}^\times\oplus \F_{p_2}^\times\\
& \simeq \z \oplus \z/2 \oplus \z/12 \oplus \z/2^{s_{p_2}}\oplus \z/3^t\oplus\z/m_{p_2}'
\end{align*}
(see \cite[Theorem 3.4.3, Calculation 3.3.2, Exercise 3.4.1]{wei1994}). This is a contradiction and therefore the upper row of  
(\ref{13p}) must split. Therefore,
\[
\begin{array}{c}
H_2(\SL_2(\z[1/13p_2]),\z)\simeq \z \oplus \z/6 \oplus \F_{p_2}^\times
\end{array}.
\]

Now, let $s_{p_2}=1$. If the upper exact sequence of the diagram (\ref{13p}) splits, then
\[
\begin{array}{c}
H_2(\SL_2(\z[1/13p_2]),\z)\simeq \z \oplus \z/6 \oplus \F_{p_2}^\times \simeq 
\z \oplus \z/2 \oplus \z/2\oplus \z/3 \oplus \z/m_{p_2}.
\end{array}
\]
But this contradicts the surjectivity of the map
\[
\begin{array}{c}
\delta_{13}: H_2(\SL_2(\z[1/13p_2]),\z) \two \F_{13}^\times\simeq \z/4\oplus \z/3
\end{array}
\]
(see Lemma \ref{H2-to-Fp}). 
Therefore the first row of the diagram (\ref{13p}) does not split. If $t\geq 1$, we have seen that
$H_2(\SL_2(\z[1/13p_2]),\z)$ can not have a copy of $\z/3^{t+1}$. Thus, the only remaining case is 
the isomorphism
\[
\begin{array}{c}
H_2(\SL_2(\z[1/13p_2]),\z) \simeq \z \oplus \z/12 \oplus \z/m_{p_2}\simeq \z\oplus \z/12\oplus \z/((p_2-1)/2)
\end{array}
\]
(see \cite[Exercise 3.4.1]{wei1994}).
This proves the claim of item (v) for $l=2$. The case $l>2$ can be done as in the proof of (iii).
This completes the proof of  the theorem.
\end{proof}


\begin{rem}
%
By Theorem \ref{rp=1}, we have $H_2(\SL_2(\z[{1}/{46}]),\z)\simeq \z\oplus \z/22$. However, \cite[Table 1]{ae2014} 
states the isomorphism $H_2(\SL_2(\z[{1}/{46}]),\z)\simeq\z/22$. This is a notational error in \cite{ae2014}; the 
correct isomorphism is confirmed in \cite[Table 3.1, p.~27]{bui2015}.
\end{rem}

\section{On the rank of the second homology of \texorpdfstring{$\SL_2(\z[1/n])$}{Lg}}\label{sec7}

For any $n>1$, let 
\[
\begin{array}{c}
r_n:=\rank\ H_2(\SL_2(\z[1/n]),\z).
\end{array}
\]
We have already seen the value of $r_p$ for any prime $p$. 

\begin{prp}\label{rank=1}
Let $n>1$ be a square free integer. If one of the primes $2$, $3$, $5$, $7$ or $13$ divides $n$, then $r_{n}=1$.
\end{prp}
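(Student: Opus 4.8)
The plan is to deduce this directly from the structural results already in hand, chiefly Theorem~\ref{exact2} and Proposition~\ref{rq=1-inj}. Write $n=p_1\cdots p_l$ for the prime decomposition of $n$, ordered so that $r_{p_1}\leq\dots\leq r_{p_l}$, breaking ties by size exactly as in Theorem~\ref{exact2}. Recall from the discussion following Theorem~\ref{AN} that $r_p\geq 1$ for every prime $p$, with equality precisely when $p\in\{2,3,5,7,13\}$. Since by hypothesis at least one such prime divides $n$, the minimal value $r_{p_1}$ among the prime divisors of $n$ equals $1$; in particular $p_1\in\{2,3,5,7,13\}$.

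First I would dispose of the case $l=1$: then $n=p_1$ is itself one of $2,3,5,7,13$, and $r_n=r_{p_1}=1$ by Theorem~\ref{AN}. So assume $l>1$. By Theorem~\ref{exact2} we have the exact sequence
\[
H_2(\SL_2(\z[1/p_1]),\z)\arr H_2(\SL_2(\z[1/n]),\z)\arr \bigoplus_{i=2}^l\F_{p_i}^\times\arr 1,
\]
and since $p_1$ is one of $2,3,5,7,13$, Proposition~\ref{rq=1-inj} (applied with $m=p_1$) shows the left-hand map is injective. Hence we obtain a short exact sequence
\[
0\arr H_2(\SL_2(\z[1/p_1]),\z)\arr H_2(\SL_2(\z[1/n]),\z)\arr \bigoplus_{i=2}^l\F_{p_i}^\times\arr 1
\]
whose right-hand term is a finite group. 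As rank is additive in short exact sequences of finitely generated abelian groups and a finite group has rank $0$, we conclude
\[
r_n=\rank\ H_2(\SL_2(\z[1/n]),\z)=\rank\ H_2(\SL_2(\z[1/p_1]),\z)=r_{p_1}=1.
\]

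There is no genuine obstacle here: the statement is essentially a corollary of Theorem~\ref{rp=1}, and one could alternatively read it off case by case, since each of the five cases there exhibits $H_2(\SL_2(\z[1/n]),\z)$ as $\z$ plus an explicit finite group. The only points requiring care are the identification that $r_{p_1}=1$ forces $p_1\in\{2,3,5,7,13\}$ — precisely the Adem--Naffah characterization recalled above — and the appeal to Proposition~\ref{rq=1-inj}, which furnishes the injectivity needed to upgrade the cokernel sequence of Theorem~\ref{exact2} to a short exact sequence with finite quotient.
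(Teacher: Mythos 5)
Your proof is correct and follows essentially the same route as the paper, which proves this proposition by citing exactly the same three ingredients: Theorem~\ref{exact2}, Proposition~\ref{rq=1-inj}, and the fact that $r_p=1$ precisely for $p\in\{2,3,5,7,13\}$. The paper also remarks that the statement follows from the subsequent rank bound $1\leq r_n\leq\min\{r_p: p\mid n\}$ (Proposition~\ref{rank}), which gives a somewhat easier alternative, but your argument matches the primary proof.
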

\begin{proof}
This follows from Theorem \ref{exact2}, Proposition \ref{rq=1-inj} and the fact that, for a prime
$p$, $r_p=1$ if and only if $p=2,3,5,7 \text{ or } 13$. But this also follows from the next proposition, 
which can be proved much easier.
\end{proof}

\begin{prp}\label{rank}
For any square free  integer $n >1$, 
\[
1\leq r_n\leq \min \{r_p: p \ {\rm prime},\ p\mid n\}.
\]
\end{prp}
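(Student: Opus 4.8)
The plan is to deduce both inequalities from the single estimate
\begin{equation}\label{star}
r_{pm}\le r_m\qquad\text{for every square-free }m>1\text{ and every prime }p\nmid m .
\end{equation}
Granting (\ref{star}), the upper bound follows by adjoining primes one at a time: given a prime $p\mid n$, write $n=p\,q_1\cdots q_k$ with $q_1,\dots,q_k$ the remaining prime factors of $n$, and apply (\ref{star}) successively to $m=p$, then $m=pq_1$, then $m=pq_1q_2$, and so on, obtaining $r_n\le r_p$; since $p\mid n$ was arbitrary, $r_n\le\min\{r_p:p\mid n\}$. (This half also follows at once from Theorem \ref{exact2}.) For the lower bound, let $N$ be the product of the primes dividing $6n$, so that $\z[1/6n]=\z[1/N]$ and $6\mid N$; adjoining to $n$ the at most two primes among $\{2,3\}$ not dividing $n$ and applying (\ref{star}) gives $r_n\ge r_N$, while Hutchinson's Theorem \ref{H26} yields $H_2(\SL_2(\z[1/N]),\z)\simeq\z\oplus\bigoplus_{q\mid N}\z/(q-1)$, so $r_N=1$. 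Hence $r_n\ge 1$.

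To prove (\ref{star}), apply the Mayer--Vietoris sequence of Section \ref{sec3} to the amalgam $\SL_2(\z[1/pm])\simeq\SL_2(\z[1/m])\ast_{\Gamma_0(m,p)}\SL_2(\z[1/m])$:
\[
H_2(\Gamma_0(m,p),\z)\overset{\alpha_2}{\larr}H_2(\SL_2(\z[1/m]),\z)^{\oplus 2}\overset{\beta_2}{\larr}H_2(\SL_2(\z[1/pm]),\z)\overset{\partial}{\larr}H_1(\Gamma_0(m,p),\z),
\]
where $\alpha_2(x)=({i_1}_\ast x,{i_2'}_\ast x)$. By Theorem \ref{H2-surj} the map ${i_1}_\ast$ is surjective, so the first-coordinate projection of $\im(\alpha_2)$ is all of $H_2(\SL_2(\z[1/m]),\z)$; it follows that every class in $\coker(\alpha_2)=\im(\beta_2)$ is represented by an element of the form $(0,c)$ --- subtract $({i_1}_\ast x,{i_2'}_\ast x)$ with ${i_1}_\ast x=a$ from a representative $(a,b)$ --- so $\im(\beta_2)$ is a quotient of a single copy of $H_2(\SL_2(\z[1/m]),\z)$ and $\rank\im(\beta_2)\le r_m$. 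On the other hand, $\partial$ embeds $\coker(\beta_2)$ into $H_1(\Gamma_0(m,p),\z)$, which is finite: the image of $H_1(\Gamma(m,p),\z)$ in $H_1(\SL_2(\z[1/m]),\z)$ is finite by Theorem \ref{H1} and its kernel is a $p$-group by Theorem \ref{p-group}, so $H_1(\Gamma(m,p),\z)$ is finite, and then finiteness of $H_1(\Gamma_0(m,p),\z)$ follows from the exact sequence $H_2(\B(\F_p),\z)\to H_1(\Gamma(m,p),\z)_{\B(\F_p)}\to H_1(\Gamma_0(m,p),\z)\to H_1(\B(\F_p),\z)\to 0$ of Section \ref{sec3}. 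Consequently $\im(\beta_2)$ has finite index in $H_2(\SL_2(\z[1/pm]),\z)$, whence $r_{pm}=\rank\im(\beta_2)\le r_m$, which is (\ref{star}).

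The delicate point is the reduction of $\coker(\alpha_2)$ from a quotient of $H_2(\SL_2(\z[1/m]),\z)^{\oplus 2}$ to a quotient of a single copy; this is exactly where the surjectivity in Theorem \ref{H2-surj} is used, and without it the method only yields $r_{pm}\le 2r_m$, too weak to bootstrap the lower bound. Every other step is routine bookkeeping with finite groups, and --- in contrast with Theorem \ref{G0} or Proposition \ref{exact1} --- no congruence conditions on $m$ or $p$ are required.
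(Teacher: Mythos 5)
Your proof is correct, but it reaches the two bounds by a genuinely different route than the paper. The paper works throughout with rational coefficients: since $\B(\F_p)$ and $\SL_2(\F_p)$ are finite, the Lyndon/Hochschild--Serre spectral sequences attached to the extensions in (\ref{ext}) collapse over $\q$, so the map $H_k(\Gamma_0(m,p),\q)\to H_k(\SL_2(\z[1/m]),\q)$ is surjective for all $k$ essentially for free; feeding this into the rational Mayer--Vietoris sequence gives surjectivity of $H_2(\SL_2(\z[1/m]),\q)\to H_2(\SL_2(\z[1/pm]),\q)$, hence the upper bound by induction, and the lower bound then follows from the same surjectivity together with Hutchinson's $r_{6n}=1$ via a commutative square comparing $n$ with $6n$. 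You instead establish the single integral inequality $r_{pm}\le r_m$ by using Theorem \ref{H2-surj} to collapse $\coker(\alpha_2)$ to a quotient of one copy of $H_2(\SL_2(\z[1/m]),\z)$ and the finiteness of $H_1(\Gamma_0(m,p),\z)$ to control $\coker(\beta_2)$; this is a clean unification of both bounds (the lower one by adjoining $2$ and $3$ and quoting Theorem \ref{H26}), but it leans on the much harder integral surjectivity of Section \ref{sec5}, whereas the point of the paper's argument is precisely that the rank statement needs only the easy rational shadow of that surjectivity. One small patch: ``finite image plus $p$-group kernel'' does not by itself force $H_1(\Gamma(m,p),\z)$ to be finite, since a $p$-group may be infinite; finiteness is better drawn from the congruence subgroup property (Theorem \ref{cong}), because $[\Gamma(m,p),\Gamma(m,p)]$ is a noncentral normal subgroup of $\SL_2(\z[1/m])$ and hence of finite index --- this is also implicit in the proof of Theorem \ref{p-group}, whose kernel is in fact a finite $p$-group.
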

\begin{proof}
It follows from Theorem \ref{exact2} that 
$r_n\leq \min \{r_p: p \ {\rm prime},\ p\mid n\}$.
But here we give a much easier proof of this fact. We may assume that $n=p_1\cdots p_l$, where 
$p_i$'s are distinct primes with $r_{p_1} \leq r_{p_2} \leq \cdots \leq r_{p_l}$.
Let $m=p_1\cdots p_{l-1}$. From the diagram (\ref{ext}), we obtain the morphism of 
Lyndon/Hochschild-Serre spectral sequences
\[
\begin{tikzcd}
E_{r,s}^2=H_r(\B(\F_p),H_s(\Gamma(m, p_l),\q)) \ar[r,Rightarrow]\ar[d]& H_{r+s}(\Gamma_0(m, p_l),\q)\ar[d]\\
\EE_{r,s}^2=H_r(\SL_2(\F_p), H_s(\Gamma(m, p_l),\q)) \ar[r, Rightarrow]& H_{r+s}(\SL_2(\z[1/m]),\q).
\end{tikzcd}
\]
Since $E_{r,s}^2=0$ and $\EE_{r,s}^2=0$, for $r>0$ (see \cite[Corollary 10.2, \S10, Chap III]{brown1994}), 
and $E_{0,s}^2 \arr \EE_{0,s}^2$ is surjective, we see that, for any $k\geq 0$, the map
\begin{equation}\label{SURJ}
\begin{array}{c}
H_k(\Gamma_0(m, p_l),\q)\arr  H_k(\SL_2(\z[1/m]),\q)
\end{array}
\end{equation}
is surjective. Since $H_1(\Gamma_0(m, p_l),\z)$ is a finite group, $H_1(\Gamma_0(m, p_l),\q)=0$. Now, by the 
Mayer-Vietoris exact sequence
\begin{align*}
\begin{array}{c}
H_2(\Gamma_0(m, p_l),\q) \!\arr\! H_2(\SL_2(\z[1/m]),\q)\! \oplus\! H_2(\SL_2(\z[1/m]),\q) 
\!\arr\!  H_2(\SL_2(\z[1/n]),\q) \!\arr\! 0
\end{array}
\end{align*}
(see Section \ref{sec3}) and the surjective map (\ref{SURJ}), we see that 
\begin{equation}\label{SURJ2}
\begin{array}{c}
H_2(\SL_2(\z[1/m]),\q) \arr  H_2(\SL_2(\z[1/n]),\q)
\end{array}
\end{equation}
is surjective. Thus, by induction on $l$, we have
\[
r_n\leq r_{p_1}=\min \{r_p: p \ {\rm prime},\ p\mid n\}.
\]
Now, let $p\mid n$ and consider the commutative diagram
\[
\begin{tikzcd}
H_2(\SL_2(\z[1/p]),\q) \ar[r] \ar[d] & H_2(\SL_2(\z[1/n]),\q)\ar[d]\\
H_2(\SL_2(\z[1/6p]),\q) \ar[r] & H_2(\SL_2(\z[1/6n]),\q).
\end{tikzcd}
\]
By Theorem \ref{H26}, $r_{6n}=1$. Thus, the lower horizontal map in the above diagram is bijective, i.e.
\[
\begin{array}{c}
H_2(\SL_2(\z[1/6p]),\q) \simeq \q \simeq  H_2(\SL_2(\z[1/6n]),\q).
\end{array}
\]
Moreover, by (\ref{SURJ2}), the vertical maps in the above diagram are surjective. It follows 
from these that the upper horizontal map is not trivial. Therefore, $r_n \geq 1$. 
\end{proof}

We strongly suspect that the value of $r_n$ in the above theorem is equal to its upper bound. 
At this time, we lack a definitive proof (or disproof) of this claim. However, building upon the 
results of this paper, we propose the following conjecture over the group structure of
$H_2(\SL_2(\z[1/n]),\z)$ when $n$ is an integer not divisible by any of the primes 
$2$, $3$, $5$, $7$ or $13$.

\begin{conj}\label{g-conj}
Let $n=p_1 \cdots p_l$, $l > 1$, be the prime decomposition of $n$ such that 
$1 < r_{p_1}\leq \dots \leq r_{p_l}$. If $r_{p_j}=r_{p_{j+1}}$, for some $j$,
we assume that $p_j < p_{j+1}$. 
\par {\rm (i)} If $p_1\equiv 11 \pmod {12}$, then 
\[
\begin{array}{c}
H_2(\SL_2(\z[1/n]),\z) \simeq \z^{r_{p_1}}\oplus  \bigoplus_{j=2}^l \F_{p_j}^\times.
\end{array}
\]
\par {\rm (ii)} If $p_1\equiv 5 \pmod {12}$, then 
\[
\begin{array}{c}
H_2(\SL_2(\z[1/n]),\z) \!\simeq \!
\end{array}
\z^{r_{p_1}} \oplus
\begin{cases}
 \z/2  \oplus \bigoplus_{j=2}^l \F_{p_j}^\times 
 & \text{if $p_j\equiv 1\!\!\!\! \pmod {4}$ for all $2\leq j\leq l$},\\
 \\
\z/4\!\oplus\! \z/((q_i-1)/2)\!  \oplus\! \bigoplus_{\underset{j\neq i}{j=2}}^l 
\F_{p_j}^\times & \text{if $p_i\equiv 3\!\!\!\! \pmod {4}$ for some $2\leq i\leq l$.}
\end{cases}
\]
\par {\rm (iii)} If $p_1\equiv 7 \pmod {12}$, then 
\[
\begin{array}{c}
H_2(\SL_2(\z[1/n]),\z) \simeq \z^{r_{p_1}}\oplus \z/3\oplus 
\bigoplus_{j=2}^l \F_{p_j}^\times.
\end{array}
\]
\par {\rm (iv)} If $p_1\equiv 1 \pmod {12}$, then 
\[
\begin{array}{c}
H_2(\SL_2(\z[1/n]),\z)\! \simeq \!
\end{array}
\!\z^{r_{p_1}} \oplus
\begin{cases}
\z/6  \oplus \bigoplus_{j=2}^l \F_{p_j}^\times & \text{if $p_j\equiv 1\!\!\! \pmod {4}$ for all $2\leq j\leq l$},\\
\\
\z/12\! \oplus\! \z/((q_i-1)/2) \! \oplus\! \bigoplus_{\underset{j\neq i}{j=2}}^l \F_{p_j}^\times 
& \text{if $p_i\equiv 3\!\!\! \pmod {4}$ for some $2\leq i\leq l$.}
\end{cases}
\]
\end{conj}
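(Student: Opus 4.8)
\emph{Proof proposal.} I would argue by induction on $l$, taking the Adem--Naffah computation $H_2(\SL_2(\z[1/p_1]),\z)\simeq\z^{r_{p_1}}\oplus T_{p_1}$ (Theorem~\ref{AN}) as the base case, where $T_{p_1}$ is $\z/6$, $\z/2$, $\z/3$ or $0$ according as $p_1\equiv 1,5,7$ or $11\pmod{12}$. For the inductive step from $n'=p_1\cdots p_{l-1}$ to $n=n'p_l$, the engine is the exact sequence of Theorem~\ref{exact2},
\[
H_2(\SL_2(\z[1/p_1]),\z)\overset{\iota}{\arr}H_2(\SL_2(\z[1/n]),\z)\arr\bigoplus_{i=2}^l\F_{p_i}^\times\arr 1 ,
\]
which reduces everything to two tasks: (a) prove that $\iota$ is injective (which already pins the rank down to $r_n=r_{p_1}$, the upper bound of Proposition~\ref{rank}); and (b) identify the isomorphism type of the resulting extension of $\bigoplus_{i=2}^l\F_{p_i}^\times$ by $\z^{r_{p_1}}\oplus T_{p_1}$. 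For (b) the expected pattern is the one already visible in the proof of Theorem~\ref{rp=1}(iii),(v): the free summand $\z^{r_{p_1}}$ splits off; a $2$-primary cyclic factor of $T_{p_1}$ fuses with the $\z/2$ inside some $\F_{p_i}^\times$ precisely when that $p_i\equiv 3\pmod 4$ (yielding the $\z/4\oplus\z/((q-1)/2)$, resp.\ $\z/12\oplus\z/((q-1)/2)$, alternatives); and the $3$-primary part of $T_{p_1}$, present when $p_1\equiv 1,7\pmod{12}$, always splits off. The obstruction to the wrong splittings is, as before, the surjectivity of the tame-symbol maps $\delta_{p_i}\colon H_2(\SL_2(\z[1/n]),\z)\two\F_{p_i}^\times$ (Lemma~\ref{H2-to-Fp}).

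The genuine difficulty --- and the reason the statement remains a conjecture --- is (a). The corresponding injectivity in Proposition~\ref{rq=1-inj} was proved by comparison with $\z[1/6q]$, exploiting that for $q\in\{2,3,5,7,13\}$ both $H_2(\SL_2(\z[1/q]),\z)$ and $H_2(\SL_2(\z[1/6q]),\z)$ have rank $1$. When $r_{p_1}>1$ this breaks down completely: by Theorem~\ref{H26} (and more generally Theorem~\ref{rp=1}) every $H_2(\SL_2(\z[1/m]),\z)$ with $m$ divisible by one of $2,3,5,7,13$ has rank $1$, so passing from $\z[1/n]$ to $\z[1/6n]$ --- or adjoining any prime from $\{2,3,5,7,13\}$ --- collapses the free rank to $1$ and destroys all control of the free part of $H_2$. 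In other words, there is no fully understood overgroup into which $H_2(\SL_2(\z[1/n]),\z)$ embeds rank-faithfully, so a genuinely new input is needed; concretely one has to prove that $r_n$ attains the upper bound $r_{p_1}$ of Proposition~\ref{rank}, which is exactly the open problem flagged after Proposition~\ref{rank=1}.

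A natural way to attack the rank: with $\q$-coefficients the Mayer--Vietoris sequence of Section~\ref{sec3} for $\SL_2(\z[1/n'p_l])=\SL_2(\z[1/n'])\ast_{\Gamma_0(n',p_l)}\SL_2(\z[1/n'])$ gives $H_2(\SL_2(\z[1/n]),\q)\simeq\coker(\alpha_2\otimes\q)$, since $H_1(\Gamma_0(n',p_l),\q)=0$. Using that the edge map $H_2(\Gamma_0(n',p_l),\q)\arr H_2(\SL_2(\z[1/n']),\q)$ is surjective by~(\ref{SURJ}), a short diagram chase identifies this cokernel with $H_2(\SL_2(\z[1/n']),\q)/{i_2'}_\ast(\ker{i_1}_\ast\otimes\q)$; hence, by induction, $r_n=r_{p_1}$ is equivalent to the vanishing ${i_2'}_\ast(\ker{i_1}_\ast\otimes\q)=0$ --- that is, to the statement that every rational $H_2$-class of $\Gamma_0(n',p_l)$ annihilated by the standard inclusion into $\SL_2(\z[1/n'])$ is also annihilated by the twisted one. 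Proving this requires a description of $H_2(\Gamma_0(n,p),\z)$ finer than the surjectivity recorded in Theorem~\ref{H2-surj}, and is the step I expect to be the main obstacle; once it is available, an integral refinement of the same diagram chase, combined with the $\delta_p$-maps, should upgrade it to the full injectivity of $\iota$ and thereby, via task (b), establish Conjecture~\ref{g-conj}.
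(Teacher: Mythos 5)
The statement you are addressing is stated in the paper only as Conjecture~\ref{g-conj}; the authors explicitly say, just before it, that they ``lack a definitive proof (or disproof)'' of the key rank claim, so there is no proof in the paper to compare against. Your proposal is therefore rightly not a proof but a strategy sketch, and to your credit you diagnose the obstruction exactly where the authors do: everything hinges on (a) showing that $H_2(\SL_2(\z[1/p_1]),\z)\to H_2(\SL_2(\z[1/n]),\z)$ is injective, equivalently that $r_n$ attains the upper bound $r_{p_1}$ of Proposition~\ref{rank}. Your explanation of why the paper's own injectivity mechanism fails here is also correct and is the substantive point: Proposition~\ref{rq=1-inj} works by embedding into $H_2(\SL_2(\z[1/6q]),\z)$, whose structure is fully known by Theorem~\ref{H26} and which has the \emph{same} rank (namely $1$) as the source; once $r_{p_1}>1$, adjoining any of $2,3,5,7,13$ collapses the rank to $1$, so that comparison can no longer detect the free part, and no rank-faithful, structurally understood overgroup is available. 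Your Mayer--Vietoris reformulation of the rank question --- that $r_n=r_{n'}$ if and only if ${i_2'}_\ast$ kills $\ker({i_1}_\ast)\otimes\q$ in $H_2(\Gamma_0(n',p_l),\q)$ --- is a correct and potentially useful rephrasing, but as you say it requires information about $H_2(\Gamma_0(n,p),\z)$ beyond the surjectivity of Theorem~\ref{H2-surj}, and neither you nor the paper supplies it.

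One further caveat on your task (b): even granting injectivity of $\iota$, the splitting arguments you import from the proof of Theorem~\ref{rp=1}(iii),(v) are not self-contained. There, ruling out the wrong extension classes uses both the surjectivity of $\delta_{p_i}$ \emph{and} an injection of $H_2(\SL_2(\z[1/n]),\z)$ into a group of explicitly known isomorphism type (again $H_2(\SL_2(\z[1/6n]),\z)$ or $H_2(\SL_2(\z[1/78p_2]),\z)$). In the conjectural setting that second ingredient is exactly what is missing, at least for the free part; one would need to argue separately that the torsion subgroup still embeds faithfully into the known rank-one comparison group, which is plausible but not automatic. So both halves of your plan ultimately rest on the same unproven input, and the proposal, while an accurate map of the terrain, does not close the gap --- consistent with the statement remaining a conjecture in the paper.
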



\end{document}